\documentclass[12pt, reqno]{amsart}
\usepackage[margin=1in]{geometry}
\usepackage{amsmath, amsfonts, amssymb, url}
\usepackage{eucal}
\usepackage{mathrsfs}
\usepackage{latexsym}
\usepackage{cite}
\usepackage{cases}
\usepackage{xcolor}
\usepackage{enumitem}
\allowdisplaybreaks[4]
\newtheorem{thm}{Theorem}[section]
\newtheorem{prop}{Proposition}[section]

\newtheorem{lem}[thm]{Lemma}
\newtheorem{cor}[thm]{Corollary}

\theoremstyle{remark}

\theoremstyle{definition}

\numberwithin{equation}{section}

\begin{document}
\title[ $(p,q)$-weighted dual curvature measures]
{ $(p,q)$-weighted dual curvature measures}

\author[Q. Huang]{Qingzhong Huang}
	\address[Q. Huang]{College of Data Science, Jiaxing University, Jiaxing 314001,	China} \email{hqz376560571@163.com} 

	\author[A.-J. Li]{Ai-Jun Li}
	\address[A.-J. Li]{School of Science,
		Zhejiang University of Science and Technology, Hangzhou 310023, China}
	\email{liaijun72@163.com}

\begin{abstract}
We study \((p,q)\)-weighted dual curvature measures associated with a fixed nonnegative locally integrable Borel density satisfying a trace-regularity condition. These measures arise naturally from a variational formula for Wulff shapes. We establish the trace-regularity condition when \(g\) is pointwise \(\alpha\)-homogeneous for some \(\alpha\in\mathbb R\), or when \(g\) is radially non-increasing and continuous. For nontrivial even densities in either class, we prove existence results for the even version of the associated Minkowski problem under suitable assumptions. We also establish uniqueness when the prescribed measure has finite support, under a strict positivity assumption on the density and additional restrictions on the parameters. In the case of Lebesgue measure, our curvature measures reduce to those introduced by Lutwak, Yang, and Zhang, and our variational formulas recover their corresponding variational formula. For Gaussian measure, our results are related to those of Feng, Hu, and Xu, and Feng, Li, and Xu. Finally, we investigate the affine properties of the new measures and the associated mixed volumes.
\end{abstract}

\subjclass[2020]{52A20, 52A40.}

\keywords{$(p,q)$-weighted dual curvature measure; Trace-regularity condition; $(p,q)$-weighted dual Minkowski problem; $(p,q)$-weighted dual mixed volume.}

\thanks{The first author was supported by  NSFC (Grant No. 11701219) and the Qin Shen Scholar Program of Jiaxing University. The second author was supported by NSFC (Grant No. 12571146 and 12231006). }

\maketitle
\section{Introduction}

We work in the $n$-dimensional Euclidean space $\mathbb R^n$,
where $n\ge 2$. For $x,y\in\mathbb R^n$, let $x\cdot y$
denote their standard inner product and
$|x|=\sqrt{x\cdot x}$ the Euclidean norm. For
$x\in\mathbb R^n\setminus\{o\}$, we write
$\overline{x}=x/|x|$. Throughout the paper, $B$ denotes the
unit ball centered at the origin, and $S^{n-1}=\partial B$
denotes the unit sphere. We use $V$ to denote
$n$-dimensional Lebesgue volume, $\mathcal H^k$ for
$k$-dimensional Hausdorff measure, and $\partial K$ for
the boundary of a set $K$.

A convex body is a compact convex set with nonempty interior.
Its support function is defined by
\begin{equation*}
h_K(x)=\max\{x\cdot y:y\in K\},
\qquad x\in\mathbb R^n.
\end{equation*}
Let $\mathcal K_o^n$ denote the class of convex bodies
containing the origin in their interiors. The polar body of
$K\in\mathcal K_o^n$ is defined by
\begin{equation*}
K^*=\{x\in\mathbb R^n:x\cdot y\le 1
\text{ for all }y\in K\}.
\end{equation*}

A set $L\subset\mathbb R^n$ containing the origin is
star-shaped with respect to the origin if $[o,x]\subset L$
for every $x\in L$. For a compact star-shaped set $L$, its
radial function is defined by
\[
\rho_L(x)=\max\{\lambda\ge0:\lambda x\in L\},
\qquad x\ne o.
\]
A star body is a compact star-shaped set whose radial function
is continuous and strictly positive on $S^{n-1}$. We denote
by $\mathcal S_o^n$ the class of star bodies, by
$\mathcal S_e^n$ its origin-symmetric members, and by
$\mathcal K_e^n$ the class of origin-symmetric convex bodies
in $\mathcal K_o^n$.

The classical Brunn--Minkowski theory is a cornerstone of
convex geometric analysis. Its central objects are convex
bodies and their associated geometric measures, including
volume, surface area, and curvature measures. Two fundamental
problems in this theory are the isoperimetric problem, which
seeks to minimize surface area among convex bodies of
prescribed volume, and the Minkowski problem, which
characterizes the measures on $S^{n-1}$ that arise as surface
area measures of convex bodies and investigates the
uniqueness of the corresponding bodies. These problems have
fostered deep connections with analysis, partial differential
equations, and optimal transport.

Over the past several decades, the classical theory has been
extended in several important directions. One of these is the
$L_p$ Brunn--Minkowski theory, initiated by Firey in the
1960s, in which classical Minkowski addition is replaced by
an $L_p$-type operation. For $p\ge1$, $t>0$, and
$K,L\in\mathcal K_o^n$, the $L_p$ Minkowski--Firey
combination $K+_p t\cdot L$ is defined by
\begin{equation}\label{lp}
h_{K+_p t\cdot L}^p(x)
=h_K^p(x)+t\,h_L^p(x),
\qquad x\in\mathbb R^n.
\end{equation}
A fundamental contribution of Lutwak \cite{L93} is the
following variational formula:
\begin{equation}\label{e-3}
\lim_{t\to0^+}
\frac{V(K+_p t\cdot L)-V(K)}{t}
=
\frac1p\int_{S^{n-1}}h_L^p(u)\,dS_p(K,u),
\end{equation}
where $S_p(K,\cdot)$ denotes the $L_p$ surface area measure
of $K$, defined by
\begin{equation*}
S_p(K,\eta)
=
\int_{n_K^{-1}(\eta)}
(x\cdot n_K(x))^{1-p}
\,d\mathcal H^{n-1}(x)
\end{equation*}
for every Borel set $\eta\subset S^{n-1}$. Here,
$n_K:\partial'K\to S^{n-1}$ is the Gauss map, defined on
the subset $\partial'K$ of boundary points having a unique
outer unit normal, and
\[
n_K^{-1}(\eta)
=
\{x\in\partial'K:n_K(x)\in\eta\}
\]
is the reverse spherical image of $\eta$. The introduction
of $L_p$ surface area measures has played a central role in
the development of the $L_p$ Brunn--Minkowski theory. For a
comprehensive treatment, see \cite[Chapter 9]{Schneider2014}
and the references therein.

The dual Brunn--Minkowski theory, pioneered by Lutwak
\cite{L75} in the 1970s, provides another important extension
of the classical theory. It revolves around dual mixed
volumes of star bodies. For $q\in\mathbb R$ and
$K,Q\in\mathcal S_o^n$, the normalized $\bar q$-th dual
mixed volume is defined by
\begin{equation}\label{VKQ}
\widetilde V_{\bar q}(K,Q)
=
\frac1n\int_{S^{n-1}}
\left(\frac{\rho_K}{\rho_Q}\right)^{\bar q}(u)
\rho_Q^n(u)\,du,
\end{equation}
where $du$ denotes spherical Lebesgue measure and
\[
s^{\bar q}
=
\begin{cases}
\dfrac1q s^q, & q\ne0,\\[6pt]
\log s, & q=0,
\end{cases}
\qquad s>0.
\]
For $q\ne0$, this normalized quantity differs from the
usual $q$-th dual mixed volume by the factor $1/q$.
In particular, when $Q=B$, the normalized $\bar q$-th dual
volume (or dual quermassintegral) of
$K\in\mathcal S_o^n$ is
\begin{equation*}
\widetilde V_{\bar q}(K)
=
\widetilde V_{\bar q}(K,B)
=
\frac1n\int_{S^{n-1}}\rho_K^{\bar q}(u)\,du.
\end{equation*}
These dual volumes and dual quermassintegrals play a
fundamental role in the dual Brunn--Minkowski theory;
see, for example, \cite[Chapter 9]{Schneider2014}.

In \cite{LYZ2018}, Lutwak, Yang, and Zhang introduced a
family of geometric measures on $S^{n-1}$, called the
$(p,q)$-th dual curvature measures. These measures arise
from the first variation of the normalized $\bar q$-th
dual mixed volume under the $L_p$ Minkowski--Firey
combination. More precisely, for $p,q\in\mathbb R$,
$K,L\in\mathcal K_o^n$, and $Q\in\mathcal S_o^n$, one has
\begin{equation}\label{e1}
\lim_{t\to0}
\frac{
\widetilde V_{\bar q}(K+_p t\cdot L,Q)
-\widetilde V_{\bar q}(K,Q)
}{t}
=
\int_{S^{n-1}}
h_L^{\bar p}(v)\,
d\widetilde C_{p,q}(K,Q,v),
\end{equation}
where $s^{\bar p}$ is defined by the same convention as
$s^{\bar q}$, with $q$ replaced by $p$, and
$K+_p t\cdot L$ is defined in \eqref{eq:2.11} and
\eqref{eq:2.12} for all $p\in\mathbb R$.
The $(p,q)$-th dual curvature measure
$\widetilde C_{p,q}(K,Q,\cdot)$ is defined by
\begin{equation}\label{CqQ}
\widetilde C_{p,q}(K,Q,\eta)
=
\frac1n
\int_{n_K^{-1}(\eta)}
(x\cdot n_K(x))^{1-p}
\rho_Q^{\,n-q}(x)
\,d\mathcal H^{n-1}(x)
\end{equation}
for every Borel set $\eta\subset S^{n-1}$.

This family encompasses several important geometric
measures. In particular,
\begin{align*}
\widetilde C_{p,q}(K,K,\cdot)
&=
\widetilde C_{p,n}(K,B,\cdot)
=
\frac1n S_p(K,\cdot),\\
\widetilde C_{0,q}(K,B,\cdot)
&=
\widetilde C_q(K,\cdot),\\
\widetilde C_{p,0}(K,B,\cdot)
&=
\frac1n J_p(K^*,\cdot).
\end{align*}
Thus, the $(p,q)$-th dual curvature measures connect several
important measures arising in the $L_p$ and dual
Brunn--Minkowski theories.

The $q$-th dual curvature measure $\widetilde C_q(K,\cdot)$
was introduced by Huang, Lutwak, Yang, and Zhang
\cite{HLYZ2016}. It arises as the variational derivative of
the dual volume and is given by
\begin{equation*}
\widetilde C_q(K,\eta)
=
\frac1n
\int_{n_K^{-1}(\eta)}
(x\cdot n_K(x))|x|^{q-n}
\,d\mathcal H^{n-1}(x)
\end{equation*}
for every Borel set $\eta\subset S^{n-1}$. In particular,
when $q=n$, it reduces to the cone volume measure
\[
\widetilde C_n(K,\cdot)
=
\frac1n S_0(K,\cdot).
\]
The $L_p$ integral curvature $J_p(K,\cdot)$, introduced by
Huang, Lutwak, Yang, and Zhang \cite{HLYZ2018}, can
equivalently be expressed as
\begin{equation*}
J_p(K,\eta)
=
\int_{n_{K^*}^{-1}(\eta)}
(x\cdot n_{K^*}(x))^{1-p}
|x|^{-n}
\,d\mathcal H^{n-1}(x)
\end{equation*}
for every Borel set $\eta\subset S^{n-1}$.

In recent years, considerable attention has been devoted to
extending problems from the classical Brunn--Minkowski theory
to the setting of general measures; see, for example,
\cite{AHRNZ,AH2023,CK2024,FLMZ1,FLMZ2,Hos1,HKL,KL2023,
LLT,LRZ2022,LP,LPRY2025,LLH2024,L2019,LM2017,MR2014,
RX2021,Wu2017,Wu2018,Zv2005}. This direction of research
is commonly known as the weighted Brunn--Minkowski theory.

Let $\mu$ be a Borel measure on $\mathbb R^n$ that is
absolutely continuous with respect to Lebesgue measure,
with a nonnegative locally integrable density $g$; that is,
\[
d\mu(x)=g(x)\,dx,
\qquad
g\in L^1_{\mathrm{loc}}(\mathbb R^n).
\]
For $K\in\mathcal K_o^n$, define
\begin{equation}\label{gK}
g_K(u):=g(\rho_K(u)u),
\qquad
\mathcal K_g^n
:=
\{K\in\mathcal K_o^n:g_K\in L^1(S^{n-1})\}.
\end{equation}
By Lemma~\ref{lem:gK-integrability}, the condition
$g_K\in L^1(S^{n-1})$ is equivalent to
$g\in L^1(\partial K,\mathcal H^{n-1})$.

We say that $K\in\mathcal K_o^n$ is
\emph{trace-regular with respect to $g$} if
\begin{equation}\label{radial-trace}
\lim_{\varepsilon\to0^+}
\frac1\varepsilon
\int_{S^{n-1}}
\int_{\rho_K(u)-\varepsilon}^{\rho_K(u)+\varepsilon}
|g(ru)-g(\rho_K(u)u)|
\,dr\,du
=0.
\end{equation}
This condition expresses averaged $L^1$-continuity of $g$
in the radial direction near $\partial K$. By
Lemma~\ref{trace-implies-integrability}, trace-regularity
implies that $K\in\mathcal K_g^n$. Moreover, condition
\eqref{radial-trace} provides the boundary regularity needed
to justify the variational formula in Theorem~\ref{main}.

Lemma~\ref{trace-examples} provides useful sufficient
conditions for trace-regularity. In particular, every
$K\in\mathcal K_o^n$ is trace-regular if $g$ is pointwise
$\alpha$-homogeneous for some $\alpha\in\mathbb R$.
The same conclusion holds if $g$ is radially non-increasing
and $r\mapsto g(ru)$ is continuous at $r=\rho_K(u)$ for
almost every $u\in S^{n-1}$. In particular, if $g$ is
continuous on $\mathbb R^n$, then every
$K\in\mathcal K_o^n$ is trace-regular. Both the Lebesgue
density, which is constant, and the Gaussian density satisfy
these conditions. Consequently,
$\mathcal K_g^n=\mathcal K_o^n$ for either measure.

For sufficiently regular densities, the following weighted
variational formula was established by Livshyts \cite{L2019}
for $p=1$ and by Wu \cite{Wu2017} for $p>1$. Under the
boundary condition \eqref{radial-trace}, Corollary~\ref{mu22}
below establishes the corresponding identity:
\begin{equation}\label{mixu2}
\lim_{t\to0}
\frac{\mu(K+_p t\cdot L)-\mu(K)}{t}
=
\int_{S^{n-1}}
h_L^{\bar p}(v)\,dS_p^\mu(K,v),
\end{equation}
where $S_p^\mu(K,\cdot)$ denotes the $L_p$ weighted surface
area measure, defined by
\begin{equation}\label{Smu}
S_p^\mu(K,\eta)
=
\int_{n_K^{-1}(\eta)}
(x\cdot n_K(x))^{1-p}
g(x)\,d\mathcal H^{n-1}(x)
\end{equation}
for every Borel set $\eta\subset S^{n-1}$. When $\mu$ is
Lebesgue measure $\lambda_n$ on $\mathbb R^n$, so that
$g\equiv1$, the measure $S_p^{\lambda_n}(K,\cdot)$ reduces
to the classical $L_p$ surface area measure $S_p(K,\cdot)$.
In this case, \eqref{mixu2} recovers the variational formula
\eqref{e-3}.

To connect $\mu$ with dual curvature measures, we introduce
the following quantity. For $K\in\mathcal K_o^n$ and
$Q,C\in\mathcal S_o^n$, define
\begin{align}\label{Vqmu}
\widetilde D_q^\mu(K,Q,C)
&:=
\frac1n\int_{K\setminus C}
\rho_Q^{n-q}(x)\,d\mu(x)
-\frac1n\int_{C\setminus K}
\rho_Q^{n-q}(x)\,d\mu(x)
\notag\\
&=
\frac1n\int_{S^{n-1}}
\int_{\rho_C(u)}^{\rho_K(u)}
\rho_Q^{n-q}(u)r^{q-1}g(ru)\,dr\,du.
\end{align}
The quantity $\widetilde D_q^\mu(K,Q,C)$ is well-defined.
Since $Q\in\mathcal S_o^n$, the function $\rho_Q$ is positive and continuous on $S^{n-1}$. Set
\[
a_Q:=\min_{u\in S^{n-1}}\rho_Q^{n-q}(u)>0,
\qquad
b_Q:=\max_{u\in S^{n-1}}\rho_Q^{n-q}(u)<\infty.
\]
By the $(-1)$-homogeneity of the radial function,
\begin{equation}\label{qn}
a_Q|x|^{q-n}
\le \rho_Q^{n-q}(x)
\le b_Q|x|^{q-n},
\qquad x\in\mathbb R^n\setminus\{o\}.
\end{equation}
It follows from \eqref{Vqmu} that
\begin{align*}
\big|\widetilde D_q^\mu(K,Q,C)\big|
&\le
\frac1n
\int_{(K\setminus C)\cup(C\setminus K)}
\rho_Q^{n-q}(x)\,d\mu(x)\\
&\le
\frac{b_Q}{n}
\int_{(K\setminus C)\cup(C\setminus K)}
|x|^{q-n}g(x)\,dx.
\end{align*}
Since $K$ and $C$ contain neighborhoods of the origin and
are bounded, their symmetric difference is contained in a compact
annulus separated from the origin. Hence $|x|^{q-n}$ is bounded on
this annulus, and the local integrability of $g$ guarantees that
the preceding integral is finite.

Let $p,q\in\mathbb R$, $K\in\mathcal K_g^n$,
$Q\in\mathcal S_o^n$, and let $\mu$ be a Borel measure on
$\mathbb R^n$ with density $g$. Motivated by the
$(p,q)$-th dual curvature measure \eqref{CqQ}, we introduce
the \emph{$(p,q)$-weighted dual curvature measure}
$\widetilde C_{p,q}^\mu(K,Q,\cdot)$, defined by
\begin{equation}\label{CWqQ}
\widetilde C_{p,q}^\mu(K,Q,\eta)
=
\frac1n
\int_{n_K^{-1}(\eta)}
(x\cdot n_K(x))^{1-p}
\rho_Q^{n-q}(x)g(x)
\,d\mathcal H^{n-1}(x)
\end{equation}
for every Borel set $\eta\subset S^{n-1}$.

Our first result is the following variational formula,
which extends \eqref{e1} to the setting of general measures.

\begin{thm}\label{main}
Let $p,q\in\mathbb R$, $K,L\in\mathcal K_o^n$,
$Q,C\in\mathcal S_o^n$, and let $\mu$ be a Borel measure
on $\mathbb R^n$ with density $g$. Assume that
$g\in L^1_{\mathrm{loc}}(\mathbb R^n)$ and that $K$
satisfies \eqref{radial-trace}. Then
\begin{equation}\label{Vv}
\lim_{t\to0}
\frac{
\widetilde D_q^\mu(K+_p t\cdot L,Q,C)
-\widetilde D_q^\mu(K,Q,C)
}{t}
=
\int_{S^{n-1}}
h_L^{\bar p}(v)\,
d\widetilde C_{p,q}^\mu(K,Q,v).
\end{equation}
\end{thm}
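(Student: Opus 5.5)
Write $K_t=K+_p t\cdot L$ and $\rho_t=\rho_{K_t}$. By \eqref{Vqmu}, the body $C$ cancels in the difference quotient, and
\[
\widetilde D_q^\mu(K_t,Q,C)-\widetilde D_q^\mu(K,Q,C)
=\frac1n\int_{S^{n-1}}\rho_Q^{n-q}(u)\int_{\rho_K(u)}^{\rho_t(u)} r^{q-1}g(ru)\,dr\,du .
\]
The plan is to show that this equals
\[
\frac1n\int_{S^{n-1}}\rho_Q^{n-q}(u)\,\rho_K^{q-1}(u)\,g_K(u)\,\bigl(\rho_t(u)-\rho_K(u)\bigr)\,du+o(t).
\]
Then the radial derivative formula for Wulff shapes finishes the argument, exactly as in the Lebesgue case of \cite{LYZ2018}.

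\textbf{Step 1 (radial derivative).} I will use the known facts from Lutwak--Yang--Zhang / Huang--Lutwak--Yang--Zhang on Wulff shapes (presumably recorded around \eqref{eq:2.11}--\eqref{eq:2.12}). First, $|\rho_t-\rho_K|\le Mt$ uniformly on $S^{n-1}$ for small $|t|$. Second, for a.e.\ $u$ (all $u$ with $\alpha_K(u)$ a singleton),
\[
\lim_{t\to0}\frac{\rho_t(u)-\rho_K(u)}{t}=\rho_K(u)\,\frac{h_L^{\bar p}(\alpha_K(u))}{h_K^{p}(\alpha_K(u))},
\]
with the convention for $p=0$ via $\log h_L$.

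\textbf{Step 2 (freezing the density; main obstacle).} This is where \eqref{radial-trace} is essential, since $g$ is merely $L^1_{\mathrm{loc}}$ and no pointwise value $g(ru)$ near $\partial K$ is available. Set $\varepsilon=M|t|$. The error from replacing $r^{q-1}g(ru)$ by $\rho_K^{q-1}(u)g(\rho_K(u)u)$ splits into two parts. The first is
\[
\int_{S^{n-1}}\int_{\rho_K-\varepsilon}^{\rho_K+\varepsilon}|g(ru)-g_K(u)|\,dr\,du,
\]
which is $o(\varepsilon)=o(t)$ by \eqref{radial-trace}. The second is $\int |r^{q-1}-\rho_K^{q-1}|\,g_K\,dr\,du=O(\varepsilon^2)\|g_K\|_{L^1}$. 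This uses Lemma~\ref{trace-implies-integrability} (so $g_K\in L^1$) and the boundedness of $\rho_K$, $\rho_K^{-1}$ and $\rho_Q^{n-q}$. Dividing by $t$, the remainder vanishes.

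\textbf{Step 3 (passing to the limit).} Dominated convergence applies to
\[
\frac1n\int\rho_Q^{n-q}\rho_K^{q-1}g_K\,\frac{\rho_t-\rho_K}{t}\,du,
\]
with dominating function $M\rho_K^{q-1}\rho_Q^{n-q}g_K\in L^1$. It gives
\[
\frac1n\int_{S^{n-1}}\rho_Q^{n-q}(u)\rho_K^q(u)g_K(u)\,h_L^{\bar p}(\alpha_K(u))\,h_K^{-p}(\alpha_K(u))\,du .
\]

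\textbf{Step 4 (identifying the measure).} Finally, change variables from the sphere to $\partial K$ via the radial map $u\mapsto \rho_K(u)u$, whose Jacobian is
\[
du=\frac{x\cdot n_K(x)}{|x|^{n}}\,d\mathcal H^{n-1}(x).
\]
Use $h_K(n_K(x))=x\cdot n_K(x)$, $\rho_K(u)=|x|$, and the $(-1)$-homogeneity $\rho_Q^{n-q}(u)|x|^{q-n}=\rho_Q^{n-q}(x)$. The integrand becomes
\[
\frac1n\,h_L^{\bar p}(n_K(x))\,(x\cdot n_K(x))^{1-p}\rho_Q^{n-q}(x)g(x).
\]
By \eqref{CWqQ} this is $\int h_L^{\bar p}\,d\widetilde C^\mu_{p,q}(K,Q,\cdot)$, which proves \eqref{Vv}.
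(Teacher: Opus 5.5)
Your proposal is correct and follows essentially the same route as the paper, which derives the result from Theorem~\ref{pda0}. That proof uses the radial variation Lemma~\ref{p1} (the uniform bound $|\rho_t-\rho_K|\le M|t|$ and the a.e.\ derivative), the same splitting of the remainder into a trace-regularity term and a $g_K$-weighted term, dominated convergence, and identification of the limit through the radial representation \eqref{23}, which is exactly your change of variables to $\partial K$. The differences are cosmetic: you bound $|r^{q-1}-\rho_K^{q-1}|$ by a Lipschitz estimate (giving $O(\varepsilon^2)$) where the paper uses a modulus of continuity, and you take $f=h_L^p$ (resp.\ $\log h_L$) directly rather than first proving the formula for a general $f\in C(S^{n-1})$.
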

 When $\mu=\lambda_n$ and $q>0$, \eqref{VD} and
\eqref{lamv} show that the variational formula \eqref{Vv}
reduces to \eqref{e1}.

The Minkowski problem for $(p,q)$-th dual curvature
measures, introduced by Lutwak, Yang, and Zhang
\cite{LYZ2018}, is known as the $L_p$ dual Minkowski problem.
The case $p=0$ was originally proposed by Huang, Lutwak,
Yang, and Zhang \cite{HLYZ2016}. This problem unifies
several fundamental geometric problems, including the
$L_p$ Minkowski problem, the dual Minkowski problem, and
the $L_p$ Aleksandrov problem. More precisely, the $L_p$
dual Minkowski problem asks for necessary and sufficient
conditions under which a finite Borel measure on
$S^{n-1}$ can be represented as a $(p,q)$-th dual curvature
measure of a convex body, and for the extent to which the
body is uniquely determined. The related $L_p$ Minkowski,
dual Minkowski, and $L_p$ Aleksandrov problems remain only
partially solved, and important special cases, such as the
centro-affine and logarithmic Minkowski problems, remain
open. The $L_p$ dual Minkowski problem has also stimulated
substantial progress in the theory of fully nonlinear
elliptic partial differential equations. The existence,
uniqueness, and regularity of solutions have been extensively
studied; see, for example,
\cite{B2023,BF,CHZ,CL2018,CLYZ,FHLX,FLX2025,FYZ2025,
GWXY,GWXY2,HDZ,HLXZ,HLYZ2016,HXYZ2024,HZ2018,Ji,
LL2020,LSW2020,LYZ2018,LYZ2024,Zhao2017}.

Motivated by the $(p,q)$-weighted dual curvature measure,
we pose the following problem.

\medskip

\noindent\textbf{The $(p,q)$-weighted dual Minkowski problem.}
Let $p,q\in\mathbb R$, $Q\in\mathcal S_o^n$, and let $\nu$
be a finite Borel measure on $S^{n-1}$. Suppose that $\mu$
is a Borel measure on $\mathbb R^n$ with density $g$.
What conditions on $\mu$ and $\nu$ guarantee the existence
of a convex body $K\in\mathcal K_o^n$ satisfying
\[
d\nu(u)
=
c_{p,q}\,d\widetilde C_{p,q}^\mu(K,Q,u)
\]
for some constant $c_{p,q}>0$?

For a $C^2$ strictly positive support function $h$ such that
$\nabla_S^2h+hI$ is positive definite, set
\[
X(u)=\nabla_Sh(u)+h(u)u.
\]
If $d\nu=f\,du$ and $g(X(u))>0$, then, in the smooth
setting, the measure equation is equivalent to the
Monge-Ampère type equation
\begin{equation*}
\det(\nabla_S^2h+hI)
=
\frac{n}{c_{p,q}}\,
h^{p-1}\rho_Q(X)^{q-n}
\frac{f}{g(X)}.
\end{equation*}
Here $\nabla_S$ and $\nabla_S^2$ denote the spherical
gradient and Hessian, respectively. For nonsmooth bodies,
the problem is understood in the sense of measures.

To study the $(p,q)$-weighted dual Minkowski problem, let $\mu$
be a Borel measure on $\mathbb R^n$ with density
$g:\mathbb R^n\to[0,\infty)$. We assume that
\[
|x|^{q-n}g(x)\in L^1_{\mathrm{loc}}(\mathbb R^n).
\]
For $K,Q\in\mathcal S_o^n$, define the $q$-th weighted
dual mixed volume by
\begin{equation}\label{Vqmu1}
\widetilde V_q^\mu(K,Q)
:=
\frac1n\int_K\rho_Q^{n-q}(x)\,d\mu(x).
\end{equation}
We first verify that this quantity is finite. By \eqref{qn},
\[
\widetilde V_q^\mu(K,Q)
\le
\frac{b_Q}{n}
\int_K |x|^{q-n}g(x)\,dx
<\infty,
\]
where the last inequality follows from the local integrability
assumption and the boundedness of $K$. Thus,
$\widetilde V_q^\mu(K,Q)$ is well-defined. Moreover,
\eqref{Vqmu} and \eqref{Vqmu1} imply that
\begin{equation}\label{VD}
\widetilde D_q^\mu(K,Q,C)
=
\widetilde V_q^\mu(K,Q)
-
\widetilde V_q^\mu(C,Q).
\end{equation}

We next discuss two important special cases. First, let
$\mu=\lambda_n$ be Lebesgue measure on $\mathbb R^n$.
Since $|x|^{q-n}$ is locally integrable near the origin if
and only if $q>0$, polar coordinates and the definitions
\eqref{VKQ} and \eqref{Vqmu1} yield
\begin{equation}\label{lamv}
\widetilde V_q^{\lambda_n}(K,Q)
=
\widetilde V_{\bar q}(K,Q),
\qquad q>0.
\end{equation}
Second, when $q=n$, the local integrability condition reduces
to $g\in L^1_{\mathrm{loc}}(\mathbb R^n)$. Since
$\rho_Q^0(x)=1$ for $x\ne o$, the weighted dual mixed volume
simplifies to
\begin{equation}\label{muK}
\widetilde V_n^\mu(K,Q)=\frac1n\mu(K).
\end{equation}

Fix $p>0$, $q\in\mathbb R$, and $\beta>0$. Let
$\Lambda_{p,q,\beta}$ denote the class of measures with
an even density $g:\mathbb R^n\to[0,\infty)$ satisfying
\begin{equation}\label{admissibility}
\begin{gathered}
|x|^{q-n}g(x)\in L^1_{\mathrm{loc}}(\mathbb R^n),\\
\lim_{r\to\infty}\frac{A_q(r)^\beta}{r^p}=0,
\qquad
\lim_{r\to0^+}\frac{A_q(r)^\beta}{r^p}=\infty,\\
A_q(r)=\int_{rB}|x|^{q-n}g(x)\,dx.
\end{gathered}
\end{equation}
The class $\Lambda_{p,q,\beta}$ for $q=n$ was investigated by Kryvonos and Langharst \cite{KL2023}, 
who also formulated the corresponding even weighted Minkowski problem. 
The following theorem generalizes their existence result to all $q\in\mathbb{R}$ 
and establishes uniqueness for prescribed measures with finite support, 
assuming strict positivity of the density and imposing additional constraints on the parameters.

\begin{thm}[Existence and uniqueness]
\label{thm:exist-uniqueness}
Let $p>0$, $q\in\mathbb R$, $\beta>0$, and
$Q\in\mathcal S_e^n$. Suppose that
$\mu\in\Lambda_{p,q,\beta}$ has a strictly positive density
$g$, and that every $K\in\mathcal K_o^n$ satisfies
\eqref{radial-trace}. Let $\nu$ be a finite even Borel
measure on $S^{n-1}$ that is not concentrated on any
great subsphere. Then there exists
$K_0\in\mathcal K_e^n$ satisfying
\begin{equation*}
d\nu
=
\frac1p
\widetilde V_q^\mu(K_0,Q)^{\beta-1}
\,d\widetilde C_{p,q}^\mu(K_0,Q,\cdot).
\end{equation*}
Furthermore, suppose that $\nu$ has finite support and
$\beta\ge1$. If, in addition, either of the following
conditions holds:
\begin{enumerate}[label=\textup{(\roman*)}]
\item $g$ is pointwise $\alpha$-homogeneous for some
$\alpha>-q$, and $\beta(q+\alpha)<p$;
\item $q>0$, $g$ is radially non-increasing and continuous,
and $\beta q<p$,
\end{enumerate}
then the solution is unique in the class of
origin-symmetric convex bodies.
\end{thm}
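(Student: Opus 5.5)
Existence comes from a variational argument in the style of Kryvonos--Langharst, and uniqueness from a finite-support comparison argument in the style of the discrete uniqueness proofs for the $L_p$ dual Minkowski problem.

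\emph{Existence.} Consider the maximization problem
\[
\sup\Big\{\Phi(K):=-\int_{S^{n-1}}h_K^{p}\,d\nu+\widetilde V_q^\mu(K,Q)^{\beta}\ :\ K\in\mathcal K_e^n\Big\}.
\]
An alternative is to minimize $\int h_K^p\,d\nu$ under the constraint $\widetilde V_q^\mu(K,Q)=\text{const}$; the unconstrained version is more convenient because $\widetilde V_q^\mu$ is not homogeneous.

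\begin{enumerate}
\item \emph{Coercivity.} Since $\nu$ is even and not concentrated on a great subsphere, there is $c_\nu>0$ with $\int |u\cdot v|^p\,d\nu(u)\ge c_\nu$ for all $v\in S^{n-1}$. If $R=\max h_K$, then $\int h_K^p\,d\nu\ge c_\nu R^p$. On the other hand, $K\subset RB$, so by \eqref{qn} we have $\widetilde V_q^\mu(K,Q)\le (b_Q/n)A_q(R)$. Because $A_q(R)^\beta/R^p\to0$ as $R\to\infty$, $\Phi(K)\to-\infty$ as $R\to\infty$. Hence maximizing sequences are bounded.
\item \emph{No degeneration.} The maximum is positive: take $K=rB$ with $r$ small and use $A_q(r)^\beta/r^p\to\infty$ together with the lower bound $a_Q$.
\item \emph{Limit is a body.} By Blaschke selection, a maximizing sequence converges to a compact convex symmetric $K_0$. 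If $K_0$ were lower dimensional, then $\mu$ of it is $0$. By local integrability, $\widetilde V_q^\mu(K_i,Q)\to0$ (dominated convergence, since $|x|^{q-n}g\in L^1_{\mathrm{loc}}$). This would give $\limsup\Phi\le0$, contradicting step 2. So $K_0\in\mathcal K_e^n$, and it is a maximizer.
\item \emph{Euler--Lagrange equation.} Perturb by $h_{K_0}^p+t f$ for even continuous $f$, i.e.\ take the Wulff shape of $(h_{K_0}^p+tf)^{1/p}$. Apply Theorem~\ref{main}, together with \eqref{VD} and its Wulff-shape version, in the form of a variational formula for $\widetilde V_q^\mu$ along Wulff perturbations. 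Combined with the Aleksandrov lemma $h_{[h_t]}=h_t$ $\widetilde C$-a.e., this gives
\[
-\int f\,d\nu+\beta\widetilde V_q^\mu(K_0,Q)^{\beta-1}\frac1p\int f\,d\widetilde C_{p,q}^\mu(K_0,Q,\cdot)=0
\]
for all even $f$. Since both measures are even, this yields the stated equation up to the factor $\beta$. Removing $\beta$ is done by rescaling the weight: replace $\nu$ by $\nu/\beta$ in the functional.
\end{enumerate}

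\emph{Uniqueness.} Let $\nu=\sum_i c_i(\delta_{u_i}+\delta_{-u_i})$, and let $K,L$ be two solutions; both are polytopes with facet normals $\pm u_i$.

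\begin{enumerate}
\item \emph{Reduction to a maximal ratio.} After possibly swapping $K$ and $L$ and scaling, compare $\lambda=\max_i h_K(u_i)/h_L(u_i)$. Suppose $\lambda>1$, so $K\ne L$.
\item \emph{Facet comparison.} Set $L'=\lambda L\supset K$, which touches $K$ at the facet(s) $F$ where the maximum is attained. The facet of $K$ with normal $u_{i_0}$ is contained in that of $L'$. Comparing the curvature measures at $u_{i_0}$:
\[
c_{i_0}\propto h^{1-p}\!\int_{F_K}\rho_Q^{n-q}g\,d\mathcal H^{n-1}\cdot\widetilde V_q^\mu(K,Q)^{\beta-1}.
\]
Use the scaling behaviour of $g$ to compare this with the same expression for $L$:
\begin{itemize}
\item in case (i), $\widetilde C_{p,q}^\mu(\lambda L,\cdot)=\lambda^{q+\alpha-p}\widetilde C$ and $\widetilde V(\lambda L)=\lambda^{q+\alpha}\widetilde V(L)$;
\item in case (ii), monotonicity gives one-sided inequalities, with $q>0$ used in the radial integral.
\end{itemize}
\item \emph{Contradiction.} Monotonicity in the other direction gives $\widetilde V_q^\mu(K,Q)\le\widetilde V_q^\mu(L',Q)$, and we have $\beta-1\ge0$. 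Combining these yields $1\le\lambda^{\beta(q+\alpha)-p}$ (respectively $\lambda^{\beta q-p}$). This contradicts $\lambda>1$ under the hypotheses $\beta(q+\alpha)<p$, resp.\ $\beta q<p$.
\end{enumerate}

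The facet inclusion uses $g>0$ and evenness to ensure a strict comparison.

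The main obstacle is the facet comparison step: one must handle the weighted facet integrals when $\rho_Q^{n-q}g$ is only radially monotone, not homogeneous, and control the $\widetilde V^{\beta-1}$ factor in the correct direction. For this I would mimic the Lutwak--Yang--Zhang discrete uniqueness argument, pairing the maximal- and minimal-ratio facets to get two inequalities whose product isolates $\lambda$.
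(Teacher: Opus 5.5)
Your proposal follows the paper's proof essentially step for step. For existence you use the same unconstrained functional, coercivity from $A_q(R)^\beta/R^p\to0$ and $\int|u\cdot v|^p\,d\nu\ge c_{p,\nu}$, positivity via small balls, Blaschke selection, and the Wulff-shape variation of Corollary~\ref{pda1}. For uniqueness you use the same single-facet comparison between one polytope and the extremal dilate of the other. You dilate $L$ outward with $\lambda>1$ where the paper shrinks $P$ with $\lambda<1$. Your use of the facet-integral form of $\widetilde C_{p,q}^\mu(\cdot,Q,\{u_{i_0}\})$ also makes $\widetilde C_{p,q}^\mu(K,Q,\{u_{i_0}\})\le\widetilde C_{p,q}^\mu(\lambda L,Q,\{u_{i_0}\})$ immediate from facet inclusion, which is slightly shorter than the paper's cone argument.

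A few things need fixing.
\begin{enumerate}
\item Your normalization is backwards. Your functional gives $\nu=\frac{\beta}{p}\widetilde V_q^\mu(K_0,Q)^{\beta-1}\widetilde C_{p,q}^\mu(K_0,Q,\cdot)$, so replacing $\nu$ by $\nu/\beta$ produces $\beta^2$. You need $\beta\nu$ instead, or simply put $\frac1\beta$ in front of $\widetilde V_q^\mu(\cdot,Q)^\beta$ as the paper does.
\item The Euler--Lagrange step does not rest on Aleksandrov's lemma. What you need is $h_{[h_t]}\le h_t$ together with $p>0$. This shows that $t\mapsto-\int h_t^p\,d\nu+\widetilde V_q^\mu([h_t],Q)^\beta$ is dominated by the maximum and is therefore maximal at $t=0$.
\item The assertion that both solutions are polytopes with the same facet normals needs a line of justification. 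Since $g>0$, $\widetilde C_{p,q}^\mu(K,Q,\cdot)$ and the surface area measure of $K$ have the same null sets, so a finitely supported $\widetilde C_{p,q}^\mu(K,Q,\cdot)$ forces $K$ to be a polytope.
\item The obstacle you anticipate does not arise, and no pairing of maximal and minimal facets is needed. In case (ii), for $\lambda\ge1$, the bounds $\widetilde V_q^\mu(\lambda L,Q)\le\lambda^q\widetilde V_q^\mu(L,Q)$ and $\widetilde C_{p,q}^\mu(\lambda L,Q,\{u\})\le\lambda^{q-p}\widetilde C_{p,q}^\mu(L,Q,\{u\})$ follow from $g(\lambda y)\le g(y)$. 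They already point in the direction your single-facet chain requires, giving $1\le\lambda^{\beta q-p}$ exactly as in case (i).
\end{enumerate}
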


In both cases (i) and (ii) of
Theorem~\ref{thm:exist-uniqueness},
Lemma~\ref{trace-examples} and
Theorem~\ref{cor:examples-Lambda} guarantee, respectively,
that every $K\in\mathcal K_o^n$ satisfies
\eqref{radial-trace} and that $\mu\in\Lambda_{p,q,\beta}$,
provided that
$g\in L^1_{\mathrm{loc}}(\mathbb R^n)$ and
$
\lambda_n(\{x\in\mathbb R^n:g(x)>0\})>0
$.
Consequently, Theorem~\ref{thm:exist-uniqueness} applies to
Lebesgue and Gaussian measures whenever $\nu$ is a finite
even Borel measure on $S^{n-1}$ that is not concentrated on
any great subsphere. For related results on the Gaussian
dual Minkowski problem, see \cite{FHLX,FLX2025}.

The $(p,q)$-weighted dual curvature measure also gives rise
to a family of weighted dual mixed volumes. For
$p,q\in\mathbb R$, $K\in\mathcal K_g^n$,
$L\in\mathcal K_o^n$, and $Q\in\mathcal S_o^n$, define
the $(p,q)$-weighted dual mixed volume by
\begin{equation}\label{Vpq}
\widetilde V_{p,q}^\mu(K,L,Q)
:=
\int_{S^{n-1}}
h_L^p(v)\,d\widetilde C_{p,q}^\mu(K,Q,v).
\end{equation}
This definition extends the classical $(p,q)$-dual mixed
volume introduced by Lutwak, Yang, and Zhang \cite{LYZ2018},
which it recovers when $\mu$ is Lebesgue measure.
The affine properties of the $(p,q)$-weighted dual
curvature measure and its associated mixed volume are
investigated in Theorems~\ref{pp1} and~\ref{slin}.

The paper is organized as follows. Section~\ref{2}
presents preliminary material. Section~\ref{3} establishes
properties of $(p,q)$-weighted dual curvature measures,
while Section~\ref{4} addresses trace-regularity.
Sections~\ref{5} and~\ref{6} contain the proofs of
Theorems~\ref{main} and~\ref{thm:exist-uniqueness},
respectively. Section~\ref{7} discusses the affine
properties of $(p,q)$-weighted dual curvature measures
and their associated dual mixed volumes.

The arguments below draw on ideas from Kryvonos and
Langharst \cite{KL2023} and Lutwak, Yang, and Zhang
\cite{LYZ2018}, particularly concerning variational
formulas and the associated Minkowski problem.

\section{Preliminaries}\label{2}

In this section, we recall some basic facts and notation concerning convex bodies that will be used throughout the paper. Comprehensive references for these materials can be found in the monographs \cite{Gardner,Schneider2014}.

Let \(C(S^{n-1})\) denote the space of continuous functions on the unit sphere \(S^{n-1}\), endowed with the topology induced by the maximum norm
\[
\|f\|_\infty=\max_{v\in S^{n-1}}|f(v)|,
\]
and let \(C^+(S^{n-1})\) be the subset of \(C(S^{n-1})\) consisting of strictly positive functions. For each \(f\in C^+(S^{n-1})\), the Wulff shape generated by \(f\) is the convex body
\begin{equation}\label{Wulff}
[f]=\bigl\{x\in\mathbb{R}^n:x\cdot v\leq f(v)\ \text{for all }v\in S^{n-1}\bigr\}.
\end{equation}
It is well known and follows immediately from the definition that
\begin{equation*}
    h_{[f]} \leq f \qquad \text{and} \qquad [h_K] = K
\end{equation*}
for every $K \in \mathcal{K}_o^n$.

For $K\in\mathcal K_o^n$, the support and radial functions are
homogeneous of degrees $1$ and $-1$, respectively. That is, for
$c>0$,
\begin{equation}\label{hhom}
h_K(cx)=c\,h_K(x),\qquad x\in\mathbb R^n,
\end{equation}
and
\begin{equation}\label{hom}
\rho_K(cx)=c^{-1}\rho_K(x),\qquad
x\in\mathbb R^n\setminus\{o\}.
\end{equation}

For $\phi\in\operatorname{GL}(n)$, let
$\phi K=\{\phi x:x\in K\}$. Then
\begin{equation}\label{hphi}
h_{\phi K}(x)=h_K(\phi^t x),\qquad x\in\mathbb R^n,
\end{equation}
and
\begin{equation}\label{phin}
\rho_{\phi K}(x)=\rho_K(\phi^{-1}x),\qquad
x\in\mathbb R^n\setminus\{o\}.
\end{equation}

For the polar body \(K^*\), we have
\[
h_{K^*}(x)=\frac{1}{\rho_K(x)}
\qquad\text{and}\qquad
\rho_{K^*}(x)=\frac{1}{h_K(x)},
\qquad x\in\mathbb R^n\setminus\{o\}.
\]

Finally, for a sequence of convex bodies \(K_i\in\mathcal K_o^n\), we say that \(K_i\) converges to \(K_0\in\mathcal K_o^n\) in the Hausdorff metric if
\[
\|h_{K_i}-h_{K_0}\|_\infty
=
\max_{v\in S^{n-1}}
|h_{K_i}(v)-h_{K_0}(v)|
\longrightarrow0
\]
as \(i\to\infty\).

\subsection{The radial Gauss map}

The supporting hyperplane of $K\in\mathcal{K}_o^n$ for each $v\in S^{n-1}$ is defined by
\begin{equation*}
H_K(v)=\{x\in\mathbb{R}^n:\langle x,v\rangle=h_K(v)\}.
\end{equation*}
For a set $\sigma\subset\partial K$, the spherical image of $\sigma$ is defined by
\begin{equation*}
\pmb{n}_K(\sigma)=\bigl\{v\in S^{n-1}:x\in H_K(v)\text{ for some }x\in\sigma\bigr\}\subset S^{n-1}.
\end{equation*}
Let \(\sigma_K\subset\partial K\) denote the set of all points \(x\in\partial K\) for which the set $\pmb{n}_K(\{x\})$, often abbreviated as $\pmb{n}_K(x)$, is not a singleton. It is well known that \(\mathcal H^{n-1}(\sigma_K)=0\) (see \cite[ p.~84]{Schneider2014}).  The spherical image map (also called the Gauss map) of \(K\) is the function
\begin{equation*}
n_K : \partial K\setminus\sigma_K \to  S^{n-1},
\end{equation*}
by letting $n_K(x)$ be the unique element in $\pmb{n}_K(x)$ for each $x\in\partial K\setminus\sigma_K$. This map is known to be continuous (see \cite[Lemma 2.2.12]{Schneider2014}). For brevity, we shall write \(\partial' K := \partial K\setminus\sigma_K\).

For \(K\in\mathcal K_o^n\), the radial map \(r_K: S^{n-1}\to\partial K\) is defined by
\begin{equation*}
r_K(u)=\rho_K(u)u,\qquad u\in S^{n-1}.
\end{equation*}
For a set \(\omega\subset S^{n-1}\), the radial Gauss image of \(K\) at \(\omega\) is the set
\begin{equation}\label{ala}
\pmb\alpha_K(\omega)=\bigl\{v\in S^{n-1}: r_K(u)\in H_K(v)\text{ for some }u\in\omega\bigr\}.
\end{equation}
When \(\omega=\{u\}\), we write \(\pmb\alpha_K(u)\) for \(\pmb\alpha_K(\{u\})\).
Let
\begin{equation}\label{ome}
	\omega_K=\{u\in S^{n-1}: r_K(u)\in\sigma_K\}.
\end{equation}
For each \(u\in\omega_K\), the set \(\pmb\alpha_K(u)\) contains more than one point. It is known that \(\mathcal H^{n-1}(\omega_K)=0\) (see \cite[Theorem 2.2.5]{Schneider2014}). For \(u\in S^{n-1}\setminus\omega_K\), \(\pmb\alpha_K(u)\) is a singleton.  This yields the radial Gauss map
\begin{equation*}
\alpha_K: S^{n-1}\setminus\omega_K\to S^{n-1},
\end{equation*}
by letting $\alpha_K(u)$ be the unique element in \(\pmb\alpha_K(u)\) for each $u\in S^{n-1}\setminus\omega_K$.
Hence $\alpha_K$ is defined almost everywhere on $ S^{n-1}$ with respect to spherical Lebesgue measure.

For a set $\eta\subset S^{n-1}$, the reverse radial Gauss image $\pmb\alpha_K^*(\eta)$ of $K\in\mathcal{K}_o^n$ is defined by
\begin{equation}\label{sta}
\pmb\alpha_K^*(\eta)=\bigl\{u\in S^{n-1}: r_K(u)\in H_K(v)\text{ for some }v\in\eta\bigr\}\subset S^{n-1}.
\end{equation}
We write $\pmb\alpha_K^*(v)$ instead of $\pmb\alpha_K^*(\{v\})$ when $\eta=\{v\}$. Suppose that $\eta_K\subset S^{n-1}$ consists of all $v\in S^{n-1}$ such that $\pmb\alpha_K^*(v)$ contains more than a single element. Obviously, for $v\in S^{n-1}\setminus\eta_K$, $\pmb\alpha_K^*(v)$ has a unique element. Therefore,  the reverse radial Gauss map is defined by
\begin{equation*}
\alpha_K^*: S^{n-1}\setminus\eta_K\to S^{n-1},
\end{equation*}
by letting $\alpha_K^*(v)$ be the unique element in \(\pmb\alpha_K^*(v)\) for each $v\in S^{n-1}\setminus\eta_K$. 
It was shown in \cite{HLYZ2016} that for $K\in\mathcal{K}_o^n$,
\begin{equation*}
\alpha_K^*=\alpha_{K^*}
\end{equation*}
almost everywhere on $ S^{n-1}$ with respect to spherical Lebesgue measure.

The following integral identity was established in
\cite[Lemma 2.9]{HLYZ2016}: if $q\in\mathbb R$, $K\in\mathcal
{K}_{o}^n$, and $\Phi:S^{n-1}\to \mathbb R$ is bounded and
Lebesgue integrable, then
\begin{equation}\label{LYZ20}\int_{S^{n-1}}\Phi(u)\rho_K(u)^qdu=\int_{\partial^{\prime}K}\Phi(\overline{x})|x|^{q-n}(x\cdot n_K(x))d\mathcal{H}^{n-1}(x),
\end{equation}
where $\overline{x}=x/|x|$.

\subsection{The $L_p$ Minkowski-Firey combination and the $L_p$ radial combination}

Building upon the Wulff shape construction \eqref{Wulff}, the definition of the $L_p$ Minkowski-Firey combination \eqref{lp} can be extended to all $p\in\mathbb{R}$, as well as to negative coefficients $s$ and $t$. For $p \neq 0$, $K,L \in \mathcal{K}_o^n$, and $s,t \in \mathbb{R}$ such that $s h_K^p + t h_L^p$ is strictly positive on $S^{n-1}$,  the $L_p$ Minkowski-Firey combination $s\cdot K +_p t\cdot L \in \mathcal{K}_o^n$ is defined by
\begin{equation}\label{eq:2.11}
s\cdot K +_p t\cdot L := \bigl[(s h_K^p + t h_L^p)^{1/p}\bigr]=\bigcap_{v\in S^{n-1}} \bigl\{x\in\mathbb{R}^n \colon x\cdot v \leq \bigl(s h_{K}(v)^p + t h_{L}(v)^p\bigr)^{1/p}\bigr\}.
\end{equation}
For $p=0$, set
\begin{equation}\label{eq:2.12}
s\cdot K +_0 t\cdot L := \bigl[h_K^s h_L^t\bigr]=\bigcap_{v\in S^{n-1}} \bigl\{x\in\mathbb{R}^n \colon x\cdot v \leq h_{K}(v)^s h_{L}(v)^t\bigr\}.
\end{equation}
Since $h_K$ and $h_L$ are strictly positive on $S^{n-1}$, the combination $s\cdot K +_0 t\cdot L$ is well defined for all $s,t\in\mathbb{R}$.

One readily verifies that for $\phi\in\mathrm{SL}(n)$ and $p\neq0$,
\begin{equation}\label{aph}
	s\cdot \phi K +_p t\cdot \phi L = \phi\bigl(s\cdot K +_p t\cdot L\bigr).
\end{equation}
If additionally $s +t =1$, then \eqref{aph} holds for $p=0$ as well. Indeed, this follows directly from the Wulff-shape property $[f(\phi^t\cdot)] = \phi[f]$ for positively homogeneous functions $f$ of degree $1$, the definition of the $L_p$ Minkowski--Firey combination, and the identity $h_{\phi K}(v)=h_K(\phi^t v)$.

For $K,L\in\mathcal S_o^n$ and $q\ne0$, the $L_q$ radial combination is defined by
\[
\rho_{s\cdot K\widetilde+_q t\cdot L}(u)
=\bigl(s\rho_K(u)^q+t\rho_L(u)^q\bigr)^{1/q},\qquad u\in S^{n-1},
\]
provided the expression inside the parentheses is strictly positive. For $q=0$, set
\[
\rho_{s\cdot K\widetilde+_0 t\cdot L}(u)=\rho_K(u)^s\rho_L(u)^t,
\qquad u\in S^{n-1},
\]
and extend from the sphere with degree $-1$. For $\phi\in\mathrm{SL}(n)$,
\begin{equation*}
s\cdot\phi K\widetilde+_q t\cdot\phi L
=\phi(s\cdot K\widetilde+_q t\cdot L)
\end{equation*}
holds for $q\ne0$, and also for $q=0$ when $s+t=1$.
 
For $q\in\mathbb R\setminus\{0\}$ and $K,L\in\mathcal S_o^n$, the $q$-th dual mixed volume $\widetilde V_{q}(K,L)$ is defined by
\begin{equation}\label{wV}
	\widetilde{V}_{q}(K,L) 
	:= \frac{q}{n}\lim_{t\to 0}\frac{V(L \widetilde{+}_q t\cdot K)-V(L)}{t}=\frac{1}{n}\int_{S^{n-1}} \rho_K^{q}(u)\rho_L^{n-q}(u)\,du,
\end{equation}

\section{Properties of $(p,q)$-weighted dual curvature measures}\label{3}

The following proposition can be proved by adapting the arguments in \cite{LYZ2018}. For completeness, we provide a proof with the boundary integrability assumption made explicit.

\begin{prop}\label{representation}
Let $p,q\in\mathbb R$, $K\in\mathcal K_g^n$, and $Q\in\mathcal S_o^n$. Then for each bounded Borel function $f : S^{n-1} \to \mathbb{R}$,
\begin{equation}\label{12}
\int_{S^{n-1}}f(v)\,d\widetilde{C}^{\mu}_{p,q}(K,Q,v)=\frac{1}{n} \int_{\partial^{\prime}K} f(n_K(x)) (x\cdot n_K(x))^{1-p}\rho^{n-q}_Q(x)\,g(x)\,d\mathcal{H}^{n-1}(x),
\end{equation}
\begin{equation}\label{23}
\int_{S^{n-1}}f(v)\,d\widetilde{C}^{\mu}_{p,q}(K,Q,v)=\frac{1}{n} \int_{S^{n-1}} f(\alpha_K(u))\,h_K^{-p}(\alpha_K(u))\,\rho_K^{q}(u)\,\rho_Q^{n-q}(u)\,g(r_K(u))\,du. 
\end{equation}
\end{prop}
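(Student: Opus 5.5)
The first identity \eqref{12} is a change of variables under the Gauss map $n_K$, and the second follows from the first via \eqref{LYZ20}.

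\emph{Proof of \eqref{12}.}
Since $\mathcal H^{n-1}(\sigma_K)=0$, the definition \eqref{CWqQ} says that $\widetilde C^\mu_{p,q}(K,Q,\cdot)$ is the pushforward under $n_K:\partial'K\to S^{n-1}$ of the measure
\[
\frac1n(x\cdot n_K(x))^{1-p}\rho_Q^{n-q}(x)g(x)\,d\mathcal H^{n-1}(x).
\]
First we check that this is a finite Borel measure.

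\begin{enumerate}[label=\textup{(\alph*)}]
\item \emph{Boundedness of the weights.} Because $K\in\mathcal K_o^n$, there are constants $0<r\le R$ with $rB\subset K\subset RB$. Hence $x\cdot n_K(x)=h_K(n_K(x))\in[r,R]$ on $\partial'K$, so $(x\cdot n_K(x))^{1-p}$ is bounded above and below by positive constants. By \eqref{qn}, $\rho_Q^{n-q}(x)$ is likewise bounded on $\partial K$, since $|x|\in[r,R]$ there.
\item \emph{Integrability of $g$.} Lemma~\ref{lem:gK-integrability} gives $g\in L^1(\partial K,\mathcal H^{n-1})$, because $K\in\mathcal K_g^n$.
\item \emph{Measurability.} Since $n_K$ is continuous on $\partial'K$, the set $n_K^{-1}(\eta)$ is Borel for every Borel $\eta$.
\end{enumerate}

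With these facts, \eqref{12} holds for indicator functions $f=\mathbf 1_\eta$ directly from the definition. It extends to simple functions by linearity. It then extends to bounded Borel $f$ by uniform approximation and dominated convergence, with dominating function $\|f\|_\infty$ times the integrable weight.

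\emph{Proof of \eqref{23}.}
We apply \eqref{LYZ20} with exponent $q$ and the function
\[
\Phi(u)=f(\alpha_K(u))\,h_K^{-p}(\alpha_K(u))\,\rho_K^{-p}(u)\cdots
\]
More precisely, we choose $\Phi$ so that, at $x=r_K(u)$, the factor $\Phi(\overline x)|x|^{q-n}(x\cdot n_K(x))$ reproduces the integrand of \eqref{12}. The relevant identities for $u\notin\omega_K$ and $x=r_K(u)$ are:
\begin{itemize}
\item $\overline x=u$ and $n_K(x)=\alpha_K(u)$;
\item $x\cdot n_K(x)=h_K(\alpha_K(u))$;
\item $\rho_Q^{n-q}(x)=|x|^{q-n}\rho_Q^{n-q}(u)$;
\item $g(x)=g(r_K(u))$.
\end{itemize}
These give
\[
(x\cdot n_K)^{1-p}\rho_Q^{n-q}(x)g(x)
=\Big[f(\alpha_K(u))h_K^{-p}(\alpha_K(u))\rho_Q^{n-q}(u)g_K(u)\Big]\,|x|^{q-n}(x\cdot n_K(x)),
\]
where the $f(n_K(x))$ factor has been absorbed into the bracket. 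So we set
\[
\Phi(u)=f(\alpha_K(u))\,h_K^{-p}(\alpha_K(u))\,\rho_Q^{n-q}(u)\,g_K(u),
\]
and \eqref{LYZ20} turns $\int\Phi\rho_K^q\,du$ into the right-hand side of \eqref{12}. This is exactly \eqref{23}.

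\emph{Main obstacle.}
The lemma \eqref{LYZ20} is stated only for bounded $\Phi$, but our $\Phi$ contains $g_K$, which is merely in $L^1(S^{n-1})$. It is also measurable only after defining $\alpha_K$ arbitrarily on the null set $\omega_K$; it is Borel off $\omega_K$ because $\alpha_K$ is continuous there.

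To handle unboundedness, truncate $g$ by setting $\Phi_N=\Phi\cdot\mathbf 1_{\{g_K\le N\}}$. This corresponds to $\mathbf 1_{\{g\le N\}}$ on $\partial K$, since $r_K$ is a bijection $S^{n-1}\to\partial K$. Each $\Phi_N$ is bounded, so \eqref{LYZ20} applies. First take $f\ge0$ and let $N\to\infty$ by monotone convergence on both sides. Then split a general bounded $f$ into positive and negative parts. Finiteness of both sides follows from $g_K\in L^1$ and the boundedness of the other factors.

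If one worries that \eqref{LYZ20} needs $\Phi$ Lebesgue measurable rather than Borel, measurability of $g_K$ follows from that of $g\circ r_K$, with $r_K$ bi-Lipschitz.
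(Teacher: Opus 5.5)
Your proposal is correct and follows essentially the same route as the paper. You get \eqref{12} directly from the definition \eqref{CWqQ} via simple functions, and \eqref{23} from \eqref{LYZ20} with the same choice $\Phi=(f\circ\alpha_K)(h_K^{-p}\circ\alpha_K)\,\rho_Q^{n-q}\,(g\circ r_K)$. Your truncation $\Phi_N=\Phi\,\mathbf 1_{\{g_K\le N\}}$, followed by monotone convergence, improves on the paper: the paper applies \eqref{LYZ20}, stated only for bounded $\Phi$, to a $\Phi$ containing the possibly unbounded factor $g\circ r_K$ without comment.
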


\begin{proof}
Let $\phi$ be a simple function on $S^{n-1}$ given by
\begin{equation*}
\phi = \sum_{i=1}^{m} c_i \mathbf{1}_{\eta_i},
\end{equation*}
where $c_i \in \mathbb{R}$ and the $\eta_i$ are Borel subsets of $S^{n-1}$. It follows from \eqref{CWqQ} that
\begin{align}
&\int_{S^{n-1}} \phi(v)\,d\widetilde{C}^{\mu}_{p,q}(K,Q,v)
\notag\\&= \int_{S^{n-1}} \sum_{i=1}^{m} c_i \mathbf{1}_{\eta_i}(v)\,d\widetilde{C}^{\mu}_{p,q}(K,Q,v)\notag \\
&= \sum_{i=1}^{m} c_i \widetilde{C}^{\mu}_{p,q}(K,Q,\eta_i) \notag\\
&= \sum_{i=1}^{m} \frac{c_i}{n}\int_{n^{-1}_K (\eta_i)}(x\cdot n_K(x))^{1-p}\rho^{n-q}_Q(x)\,g(x)\,d\mathcal{H}^{n-1}(x)\notag\\
&= \sum_{i=1}^{m} \frac{c_i}{n}\int_{\partial^{\prime}K} \mathbf{1}_{\eta_i}(n_K(x))(x\cdot n_K(x))^{1-p}\rho^{n-q}_Q(x)\,g(x)\,d\mathcal{H}^{n-1}(x)\notag \\
&=\frac{1}{n} \int_{\partial^{\prime}K} \phi(n_K(x)) (x\cdot n_K(x))^{1-p}\rho^{n-q}_Q(x)\,g(x)\,d\mathcal{H}^{n-1}(x).\label{alp}
\end{align}

Taking $\Phi = (\phi\circ \alpha_{K})(h_{K}^{-p}\circ \alpha_{K}) (g\circ r_{K})\rho_{Q}^{n-q}$ in \eqref{LYZ20} and using the fact that $\rho_{Q}$ is homogeneous of degree $-1$, we further obtain
\begin{align}
&\int_{S^{n-1}} \phi(v)\,d\widetilde{C}^{\mu}_{p,q}(K,Q,v)
\notag\\
&=\frac{1}{n}\int_{\partial^{\prime}K} \phi(\alpha_K(\overline{x}))\,h_K^{-p}(\alpha_K(\overline{x}))\,\rho_Q^{n-q}(\overline{x})\,g(r_K(\overline{x}))\,|x|^{q-n} (x\cdot n_K(x))\,d\mathcal{H}^{n-1}(x)\notag
\\
&=\frac{1}{n} \int_{S^{n-1}} \phi(\alpha_K(u))\,h_K^{-p}(\alpha_K(u))\,\rho_K^{q}(u)\,\rho_Q^{n-q}(u)\,g(r_K(u))\,du.\label{alp2}
\end{align}

From \eqref{alp} and \eqref{alp2}, we conclude that \eqref{12} and \eqref{23} hold for simple functions.
Now, for any bounded Borel function $f$, there exists a sequence of simple functions $\{\phi_k\}_{k=1}^{\infty}$
such that
\[
\|\phi_k-f\|_{\infty}\rightarrow 0 .
\]
Since $\phi_k\rightarrow f$ uniformly, we have
\[
\phi_k(\alpha_K(u))\rightarrow f(\alpha_K(u))
\]
for every $u\in S^{n-1}\setminus\omega_K$.
Moreover, since the radial Gauss map $\alpha_K$ is continuous on
$S^{n-1}\setminus\omega_K$, where $\omega_K$ is defined in \eqref{ome},
the function $f\circ\alpha_K$ is Borel measurable on
$S^{n-1}\setminus\omega_K$. Since $\omega_K$ has spherical Lebesgue
measure zero and $f$ is bounded, the function $f\circ\alpha_K$ is
Lebesgue integrable on $S^{n-1}$.

To pass to the limit in \eqref{alp2}, observe that
$g_K(u)=g(r_K(u))\in L^1(S^{n-1})$ by the assumption
$K\in\mathcal K_g^n$. Furthermore, since $K,Q\in\mathcal K_o^n$ and
$\mathcal S_o^n$, respectively, the functions
$
h_K^{-p}(\alpha_K(u))$,$
\rho_K^q(u)$,
$\rho_Q^{\,n-q}(u)$
are bounded on $S^{n-1}$. Hence, there exists a constant $C>0$ such that
$$
\left|
\phi_k(\alpha_K(u))
h_K^{-p}(\alpha_K(u))
\rho_K^q(u)
\rho_Q^{n-q}(u)
g(r_K(u))
\right|\leq
C\|f\|_{\infty}g_K(u),
$$
for all $k$ and almost every $u\in S^{n-1}$.
Since $g_K\in L^1(S^{n-1})$, the right-hand side is integrable.
Therefore, by the dominated convergence theorem,
\[
\begin{aligned}
&\lim_{k\to\infty}
\int_{S^{n-1}}
\phi_k(\alpha_K(u))
h_K^{-p}(\alpha_K(u))
\rho_K^q(u)
\rho_Q^{n-q}(u)
g(r_K(u))\,du\\
&=
\int_{S^{n-1}}
f(\alpha_K(u))
h_K^{-p}(\alpha_K(u))
\rho_K^q(u)
\rho_Q^{n-q}(u)
g(r_K(u))\,du .
\end{aligned}
\]
Similarly, by the boundedness of $f$ and the finiteness of
$\widetilde C^\mu_{p,q}(K,Q,\cdot)$,
\[
\lim_{k\to\infty}
\int_{S^{n-1}}\phi_k(v)\,
d\widetilde C^\mu_{p,q}(K,Q,v)
=
\int_{S^{n-1}}f(v)\,
d\widetilde C^\mu_{p,q}(K,Q,v).
\]
Passing to the limit in the identity for simple functions gives
\eqref{23}. Combining this with \eqref{alp}, we conclude that
\eqref{12} and \eqref{23} hold for every bounded Borel function $f$.

\end{proof}

Letting $f = \mathbf{1}_{\eta}$ in \eqref{23} and \eqref{sta}, we may rewrite \eqref{CWqQ} as
\begin{align} \label{all}
\widetilde{C}^{\mu}_{p, q}(K, Q, \eta) 
&= \int_{S^{n-1}} \mathbf{1}_{\eta}(u) \, d\widetilde{C}^{\mu}_{p, q}(K, Q, u) \notag\\
&= \frac{1}{n} \int_{S^{n-1}} \mathbf{1}_{\eta}\bigl(\alpha_{K}(u)\bigr) \, h_{K}^{-p}\bigl(\alpha_{K}(u)\bigr) \, \rho_{K}^{q}(u) \, \rho_{Q}^{n-q}(u) \, g(r_K(u)) \, du \notag \\
&= \frac{1}{n} \int_{S^{n-1}} \mathbf{1}_{\pmb\alpha_{K}^{*}(\eta)}(u) \, h_{K}^{-p}\bigl(\alpha_{K}(u)\bigr) \, \rho_{K}^{q}(u) \, \rho_{Q}^{n-q}(u) \, g(r_K(u)) \, du \notag \\
&= \frac{1}{n} \int_{\pmb\alpha_{K}^{*}(\eta)} h_{K}^{-p}\bigl(\alpha_{K}(u)\bigr) \, \rho_{K}^{q}(u) \, \rho_{Q}^{n-q}(u) \, g(r_K(u)) \, du.
\end{align}
Furthermore, by \eqref{CWqQ}, we have
\begin{equation*}
\widetilde{C}^{\mu}_{p,n}(K,B,\eta) = \frac{1}{n}\int_{n_K^{-1}(\eta)}\bigl(x\cdot n_K(x)\bigr)^{1-p} g(x) \, d\mathcal{H}^{n-1}(x) = \widetilde{C}^{\mu}_{p,q}(K, K,\eta)
\end{equation*}
for every Borel set $\eta \subseteq S^{n-1}$. Combining this identity with \eqref{Smu} immediately yields the following identities: for $K\in\mathcal K_g^n$,
\begin{equation}\label{s1}
\widetilde{C}^{\mu}_{p,q}(K, K, \cdot) = \widetilde{C}^{\mu}_{p,n}(K, B, \cdot) = \frac{1}{n} S^{\mu}_p(K, \cdot).
\end{equation}

\begin{prop}\label{pro}
Let $p,q\in\mathbb R$ and $Q\in\mathcal S_o^n$.
Assume that $g\in L^1_{\mathrm{loc}}(\mathbb R^n)$.
Then the following assertions hold:
\begin{enumerate}[label=\textup{(\roman*)}]
\item If $K_i,K\in\mathcal K_g^n$, $K_i\to K$, and
\[
\|g_{K_i}-g_K\|_{L^1(S^{n-1})}\to0,
\]
then $\widetilde C_{p,q}^\mu(K_i,Q,\cdot)$ converges weakly
to $\widetilde C_{p,q}^\mu(K,Q,\cdot)$.

\item For every $K\in\mathcal K_g^n$, the measure
$\widetilde C_{p,q}^\mu(K,Q,\cdot)$ is absolutely continuous
with respect to $S_1^\mu(K,\cdot)$.

\item If $g$ is continuous on $\mathbb R^n$, then for
$K,L\in\mathcal K_g^n$ such that $K\cup L$ is convex,
we have $K\cup L,K\cap L\in\mathcal K_g^n$ and
\[
\begin{aligned}
\widetilde C_{p,q}^\mu(K\cup L,Q,\cdot)
+\widetilde C_{p,q}^\mu(K\cap L,Q,\cdot)
=
\widetilde C_{p,q}^\mu(K,Q,\cdot)
+\widetilde C_{p,q}^\mu(L,Q,\cdot).
\end{aligned}
\]
\end{enumerate}
\end{prop}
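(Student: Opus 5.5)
All three assertions will be proved from the radial representation \eqref{23} of Proposition~\ref{representation} and the boundary representation \eqref{12}.

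\textbf{Assertion (i).} Fix $f\in C(S^{n-1})$. By \eqref{23},
\[
\int f\,d\widetilde C^\mu_{p,q}(K_i,Q,\cdot)=\frac1n\int_{S^{n-1}}F_i(u)\,g_{K_i}(u)\,du,
\qquad
F_i=(f\circ\alpha_{K_i})(h_{K_i}^{-p}\circ\alpha_{K_i})\,\rho_{K_i}^q\,\rho_Q^{n-q},
\]
and similarly for $K$ with $F$. Three facts about the bodies are needed.
\begin{itemize}
\item Since $K_i\to K$ in $\mathcal K_o^n$, we have $\rho_{K_i}\to\rho_K$ and $h_{K_i}\to h_K$ uniformly.
\item There are uniform positive lower and upper bounds on $h_{K_i}$ and $\rho_{K_i}$.
\item $\alpha_{K_i}(u)\to\alpha_K(u)$ for every $u\notin\omega_K$. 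This is the standard upper semicontinuity of the radial Gauss image (\cite[Lemma 2.2]{HLYZ2016}-type): limit points of normals at $r_{K_i}(u)$ are normals of $K$ at $r_K(u)$, hence equal $\alpha_K(u)$.
\end{itemize}
Consequently $F_i\to F$ almost everywhere, and $|F_i|\le M$ uniformly.

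Split the difference as
\[
\int F_ig_{K_i}-\int Fg_K=\int F_i(g_{K_i}-g_K)+\int (F_i-F)g_K .
\]
The first term is bounded by $M\|g_{K_i}-g_K\|_{L^1}\to0$. The second tends to zero by dominated convergence, with dominating function $2Mg_K\in L^1$. This step, the a.e.\ convergence of $\alpha_{K_i}$ off $\omega_K$, is the only delicate point, and I will cite it from \cite{HLYZ2016}.

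\textbf{Assertion (ii).} By \eqref{CWqQ}, $S_1^\mu(K,\eta)=\int_{n_K^{-1}(\eta)}g\,d\mathcal H^{n-1}$, while $\widetilde C^\mu_{p,q}(K,Q,\eta)$ integrates $g$ against the extra weight $\frac1n(x\cdot n_K(x))^{1-p}\rho_Q^{n-q}(x)$. This weight is bounded on $\partial K$, because $x\cdot n_K(x)=h_K(n_K(x))\in[\min h_K,\max h_K]\subset(0,\infty)$ and $\rho_Q^{n-q}$ is bounded on $\partial K$ by \eqref{qn}. Hence $\widetilde C^\mu_{p,q}(K,Q,\eta)\le c\,S_1^\mu(K,\eta)$ for a constant $c$, which gives absolute continuity. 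Finiteness of $S_1^\mu$ is guaranteed by $K\in\mathcal K_g^n$ via Lemma~\ref{lem:gK-integrability}.

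\textbf{Assertion (iii).} Write $M=K\cup L$ and $N=K\cap L$; both are convex bodies in $\mathcal K_o^n$. Their radial functions satisfy $\rho_M=\max(\rho_K,\rho_L)$ and $\rho_N=\min(\rho_K,\rho_L)$. Since $g$ is continuous and the boundaries are compact, $g_M$ and $g_N$ are bounded, so $M,N\in\mathcal K_g^n$.

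Partition $S^{n-1}$ into three sets:
\begin{itemize}
\item $A=\{\rho_K>\rho_L\}$, which is open;
\item $A'=\{\rho_K<\rho_L\}$, which is open;
\item $E=\{\rho_K=\rho_L\}$.
\end{itemize}
On $A$ the boundary of $M$ coincides with $\partial K$ near $r_K(u)$, so $\alpha_M=\alpha_K$. Likewise $\alpha_N=\alpha_L$ on $A$, and the symmetric statements hold on $A'$. On the interior of $E$ the four bodies agree locally. At almost every remaining point of $E$, namely the points where all four radial Gauss maps are defined, $r_K(u)=r_L(u)$ lies on $\partial K$ and $\partial L$. Because $N\subset K,L\subset M$ share this boundary point, every normal of $M$ there is a normal of $K$ and of $L$, and every normal of $K$ or $L$ there is a normal of $N$. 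Uniqueness of normals then forces $\alpha_M=\alpha_N=\alpha_K=\alpha_L$ a.e.\ on $E$.

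Apply \eqref{23} with $f=\mathbf 1_\eta$, and split each integral over $A$, $A'$ and $E$. On each piece the integrands for $M$ and $N$ together match those for $K$ and $L$: the factors $h^{-p}\circ\alpha$, $\rho^q$ and $g\circ r$ all transfer. Adding the three pieces gives the valuation identity.
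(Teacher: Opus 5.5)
Your proposal is correct.

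\textbf{Parts (i) and (ii).} These follow the paper's argument. In (i) you use the same splitting into two terms together with a.e.\ convergence of the radial Gauss maps; the paper cites \cite[Lemma~3.5]{LYZ2018} for the latter. In (ii), your bound $\widetilde C_{p,q}^\mu(K,Q,\eta)\le c\,S_1^\mu(K,\eta)$ is a slightly sharper form of the paper's remark that the weight is finite.

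\textbf{Part (iii).} Here you take a genuinely different route.

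The paper works on the normal side. It splits the Borel set $\eta$ according to $h_K$ versus $h_L$. For strictly convex bodies it matches boundary pieces of $K\cup L$ and $K\cap L$ with those of $K$ and $L$. It then reaches general $K,L$ through Weil's approximation lemma and the weak continuity of part (i). That limiting step is where continuity of $g$ enters: it turns Hausdorff convergence of the approximants into $L^1$ convergence of $g_{K_i}$.

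You instead partition the sphere into $\{\rho_K>\rho_L\}$, $\{\rho_K<\rho_L\}$ and $\{\rho_K=\rho_L\}$, and compare the integrands of \eqref{23} pointwise. You use only two facts. First, normal cones are local, and on the two open sets the relevant bodies coincide on an open cone. Second, at a common boundary point $x$ the normal cones are nested: $\pmb n_{K\cup L}(x)\subseteq\pmb n_K(x)\cap\pmb n_L(x)$ and $\pmb n_K(x)\cup\pmb n_L(x)\subseteq\pmb n_{K\cap L}(x)$.

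What your route buys:
\begin{itemize}
\item It needs no strict convexity, no approximation lemma and no appeal to part (i).
\item It treats the tie set $\{\rho_K=\rho_L\}$ explicitly, rather than through a brief boundary-decomposition claim.
\item It shows that continuity of $g$ is inessential. You use it only to place $K\cup L$ and $K\cap L$ in $\mathcal K_g^n$. That already follows from the pointwise identity $g_{K\cup L}+g_{K\cap L}=g_K+g_L$, which the paper records. So your argument gives the valuation identity for every nonnegative $g\in L^1_{\mathrm{loc}}(\mathbb R^n)$.
\end{itemize}

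The one step worth writing out is why the factor $h^{-p}\circ\alpha$ transfers. For example, on $\{\rho_K>\rho_L\}$ one has $h_{K\cup L}(\alpha_{K\cup L}(u))=r_{K\cup L}(u)\cdot\alpha_{K\cup L}(u)=r_K(u)\cdot\alpha_K(u)=h_K(\alpha_K(u))$. The other pieces are analogous.
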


\begin{proof}
We prove the three assertions separately.

\medskip
\noindent
\textup{(i)}
Let $f\in C(S^{n-1})$. By the integral representation
following from \eqref{23}, we have
\[
\int_{S^{n-1}} f(v)\,
d\widetilde C_{p,q}^\mu(K_i,Q,v)
=
\int_{S^{n-1}}\varphi_i(u)g_{K_i}(u)\,du,
\]
where
\[
\varphi_i(u)
=
\frac1n f(\alpha_{K_i}(u))
h_{K_i}^{-p}(\alpha_{K_i}(u))
\rho_{K_i}^q(u)\rho_Q^{n-q}(u).
\]
Let $\varphi$ denote the corresponding function associated
with $K$.

Since $K_i\to K$ in the Hausdorff metric and
$o\in\operatorname{int}K$, the support and radial functions
of $K_i$ are uniformly bounded above and bounded away from
zero for all sufficiently large $i$. Consequently, there
exists $M>0$ such that
\begin{equation}\label{MM}
|\varphi_i(u)|\le M,\qquad |\varphi(u)|\le M
\end{equation}
for all $u\in S^{n-1}$ and all sufficiently large $i$.

Moreover, $h_{K_i}\to h_K$ and $\rho_{K_i}\to\rho_K$
uniformly on $S^{n-1}$. By
\cite[Lemma~3.5]{LYZ2018},
$\alpha_{K_i}\to\alpha_K$ almost everywhere on $S^{n-1}$.
It follows that
\[
\varphi_i(u)\to\varphi(u)
\quad\text{for almost every }u\in S^{n-1}.
\]

By the triangle inequality,
\begin{align*}
&\left|
\int_{S^{n-1}}\varphi_i(u)g_{K_i}(u)\,du
-\int_{S^{n-1}}\varphi(u)g_K(u)\,du
\right|\\
&\quad\le
\int_{S^{n-1}}|\varphi_i(u)|
|g_{K_i}(u)-g_K(u)|\,du+
\int_{S^{n-1}}|\varphi_i(u)-\varphi(u)|
g_K(u)\,du.
\end{align*}
The first term is bounded by
$M\|g_{K_i}-g_K\|_{L^1(S^{n-1})}$, which tends to zero
by assumption. For the second term, \eqref{MM} gives
\[
|\varphi_i(u)-\varphi(u)|g_K(u)\le 2M g_K(u).
\]
Since $g_K\in L^1(S^{n-1})$, the dominated convergence
theorem implies that the second term also tends to zero.
Therefore,
\[
\lim_{i\to\infty}
\int_{S^{n-1}}f(v)\,
d\widetilde C_{p,q}^\mu(K_i,Q,v)
=
\int_{S^{n-1}}f(v)\,
d\widetilde C_{p,q}^\mu(K,Q,v).
\]
Since $f\in C(S^{n-1})$ is arbitrary, we conclude that
\[
\widetilde C_{p,q}^\mu(K_i,Q,\cdot)
\stackrel{w}{\longrightarrow}
\widetilde C_{p,q}^\mu(K,Q,\cdot).
\]

\medskip
\noindent
\textup{(ii)}
Let $\eta\subset S^{n-1}$ be a Borel set such that
\[
S_1^\mu(K,\eta)=0.
\]
By the definition of $S_1^\mu(K,\cdot)$, this means that
\[
\int_{n_K^{-1}(\eta)}
g(x)\,d\mathcal H^{n-1}(x)=0.
\]
On the other hand, by \eqref{12},
\[
\widetilde C_{p,q}^\mu(K,Q,\eta)
=
\frac1n
\int_{n_K^{-1}(\eta)}
(x\cdot n_K(x))^{1-p}
\rho_Q^{n-q}(x)g(x)\,
d\mathcal H^{n-1}(x).
\]
The weight
$(x\cdot n_K(x))^{1-p}\rho_Q^{n-q}(x)$
is finite on the relevant part of $\partial K$.
Hence the integral vanishes, and therefore
\[
\widetilde C_{p,q}^\mu(K,Q,\eta)=0.
\]

\medskip
\noindent
\textup{(iii)}
We first note that
\[
\rho_{K\cup L}(u)=\max\{\rho_K(u),\rho_L(u)\},
\qquad
\rho_{K\cap L}(u)=\min\{\rho_K(u),\rho_L(u)\}.
\]
It follows from \eqref{gK} that
\[
g_{K\cup L}(u)+g_{K\cap L}(u)=g_K(u)+g_L(u).
\]
Since $g_K,g_L\in L^1(S^{n-1})$, we obtain
$K\cup L,K\cap L\in\mathcal K_g^n$.

It remains to prove the valuation identity. We first assume
that $K$ and $L$ are strictly convex. Let
$\eta\subset S^{n-1}$ be a Borel set. Decompose $\eta$ into
three pairwise disjoint Borel sets,
\[
\eta=\eta_0\cup\eta_K\cup\eta_L,
\]
where
\begin{align*}
\eta_K&=\{v\in\eta:h_K(v)>h_L(v)\},\\
\eta_L&=\{v\in\eta:h_K(v)<h_L(v)\},\\
\eta_0&=\{v\in\eta:h_K(v)=h_L(v)\}.
\end{align*}
By the boundary decomposition, the corresponding boundary
pieces agree and have the same outer unit normals. Thus, by
\eqref{CWqQ}, we obtain
\[
\begin{aligned}
\int_{n_{K\cup L}^{-1}(\eta_K)}
(x\cdot n_{K\cup L}(x))^{1-p}
\rho_Q^{n-q}(x)g(x)\,
d\mathcal H^{n-1}(x)=
\int_{n_K^{-1}(\eta_K)}
(x\cdot n_K(x))^{1-p}
\rho_Q^{n-q}(x)g(x)\,
d\mathcal H^{n-1}(x),
\end{aligned}
\]
and
\[
\begin{aligned}
\int_{n_{K\cap L}^{-1}(\eta_K)}
(x\cdot n_{K\cap L}(x))^{1-p}
\rho_Q^{n-q}(x)g(x)\,
d\mathcal H^{n-1}(x)=
\int_{n_L^{-1}(\eta_K)}
(x\cdot n_L(x))^{1-p}
\rho_Q^{n-q}(x)g(x)\,
d\mathcal H^{n-1}(x).
\end{aligned}
\]
Therefore,
\[
\begin{aligned}
\widetilde C_{p,q}^\mu(K\cup L,Q,\eta_K)
&=\widetilde C_{p,q}^\mu(K,Q,\eta_K),\\
\widetilde C_{p,q}^\mu(K\cap L,Q,\eta_K)
&=\widetilde C_{p,q}^\mu(L,Q,\eta_K).
\end{aligned}
\]
Similarly, on $\eta_L$,
\[
\begin{aligned}
\widetilde C_{p,q}^\mu(K\cup L,Q,\eta_L)
&=\widetilde C_{p,q}^\mu(L,Q,\eta_L),\\
\widetilde C_{p,q}^\mu(K\cap L,Q,\eta_L)
&=\widetilde C_{p,q}^\mu(K,Q,\eta_L).
\end{aligned}
\]
On $\eta_0$, the corresponding boundary decomposition gives
\[
\begin{aligned}
\widetilde C_{p,q}^\mu(K\cup L,Q,\eta_0)
&=\widetilde C_{p,q}^\mu(K,Q,\eta_0),\\
\widetilde C_{p,q}^\mu(K\cap L,Q,\eta_0)
&=\widetilde C_{p,q}^\mu(L,Q,\eta_0).
\end{aligned}
\]
Adding these identities and using the additivity of
$\widetilde C_{p,q}^\mu(M,Q,\cdot)$ as a measure, we obtain
\[
\begin{aligned}
\widetilde C_{p,q}^\mu(K\cup L,Q,\eta)
+\widetilde C_{p,q}^\mu(K\cap L,Q,\eta)=
\widetilde C_{p,q}^\mu(K,Q,\eta)
+\widetilde C_{p,q}^\mu(L,Q,\eta).
\end{aligned}
\]
Since $\eta$ is arbitrary, the valuation identity holds
for strictly convex $K$ and $L$.

For arbitrary $K,L\in\mathcal K_g^n$ such that $K\cup L$
is convex, Weil's Approximation Lemma
\cite[p.~118]{LYZ2018} yields approximating sequences
of strictly convex bodies $K_i,L_i\in\mathcal K_o^n$ such that
\[
K_i\to K,\qquad L_i\to L,\qquad
K_i\cup L_i\to K\cup L,\qquad
K_i\cap L_i\to K\cap L,
\]
with $K_i\cup L_i$ convex for every $i$.
Since $g$ is continuous on $\mathbb R^n$, Hausdorff convergence
of these bodies implies convergence of the corresponding
functions in $L^1(S^{n-1})$. By part~\textup{(i)}, the associated
curvature measures converge weakly. Passing to the limit in
the valuation identity for $K_i$ and $L_i$, we obtain
\[
\begin{aligned}
\widetilde C_{p,q}^\mu(K\cup L,Q,\cdot)
+\widetilde C_{p,q}^\mu(K\cap L,Q,\cdot)=
\widetilde C_{p,q}^\mu(K,Q,\cdot)
+\widetilde C_{p,q}^\mu(L,Q,\cdot).
\end{aligned}
\]
This completes the proof.

\end{proof}

\section{Trace-regularity condition}\label{4}

Recall from \eqref{radial-trace} that $K\in\mathcal K_o^n$ is
trace-regular with respect to $g$ if
\begin{equation}\label{var}
\lim_{\varepsilon\to0^+}
\frac1\varepsilon
\int_{S^{n-1}}
\int_{\rho_K(u)-\varepsilon}^{\rho_K(u)+\varepsilon}
|g(ru)-g_K(u)|
\,dr\,du
=0,
\end{equation}
where 
\[
g_K(u)=g(\rho_K(u)u),
\qquad u\in S^{n-1}.
\]

\begin{lem}\label{lem:gK-integrability}
Let $g\in L^1_{\mathrm{loc}}(\mathbb R^n)$ and
$K\in\mathcal K_o^n$. Then $g_K\in L^1(S^{n-1})$ if and only if
$g\in L^1(\partial K,\mathcal H^{n-1})$.
\end{lem}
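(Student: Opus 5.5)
We reduce the statement to a change of variables between the sphere and $\partial K$ via the radial map $r_K(u)=\rho_K(u)u$. The key tool is the identity \eqref{LYZ20}, extended from bounded $\Phi$ to nonnegative measurable $\Phi$ by monotone convergence.

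First we fix the density of the radial map. Since $K\in\mathcal K_o^n$, there are constants $0<r_0\le R_0<\infty$ with $r_0\le \rho_K\le R_0$ on $S^{n-1}$, and $r_0\le h_K\le R_0$. For $x\in\partial'K$ we have $x\cdot n_K(x)=h_K(n_K(x))\in[r_0,R_0]$ and $|x|\in[r_0,R_0]$. Hence the weight $w(x):=|x|^{q-n}(x\cdot n_K(x))$, say with $q=0$, satisfies $c_1\le w(x)\le c_2$ on $\partial'K$ for positive constants $c_1,c_2$ depending only on $K$.

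Next we apply \eqref{LYZ20} with $q=0$ and $\Phi=\min\{g_K,m\}$, which is bounded, nonnegative and Borel. Since $\overline{x}\mapsto \rho_K(\overline x)\overline x=x$ for $x\in\partial K$, we have $g_K(\overline{x})=g(x)$. This gives
\[
\int_{S^{n-1}}\min\{g_K,m\}\,du=\int_{\partial'K}\min\{g(x),m\}\,|x|^{-n}(x\cdot n_K(x))\,d\mathcal H^{n-1}(x).
\]
Letting $m\to\infty$ and using monotone convergence on both sides yields
\[
\int_{S^{n-1}}g_K\,du=\int_{\partial'K}g(x)\,|x|^{-n}(x\cdot n_K(x))\,d\mathcal H^{n-1}(x).
\]
Combining this with the two-sided bounds on the weight, and with $\mathcal H^{n-1}(\partial K\setminus\partial'K)=\mathcal H^{n-1}(\sigma_K)=0$, we get
\[
c_1'\int_{\partial K}g\,d\mathcal H^{n-1}\le\int_{S^{n-1}}g_K\,du\le c_2'\int_{\partial K}g\,d\mathcal H^{n-1}.
\]
This proves both directions of the equivalence at once.

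The main obstacle is a measurability subtlety. Since $g\in L^1_{\mathrm{loc}}$ is only defined almost everywhere, its restriction to the null set $\partial K$ and the composition $g\circ r_K$ are ambiguous. We handle this as the paper implicitly does: fix a Borel representative of $g$. Then $g_K=g\circ r_K$ is Borel because $r_K$ is continuous, and $g|_{\partial K}$ is Borel. Both integrals are computed for this fixed representative, and the local integrability hypothesis plays no further role.

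If one prefers not to extend \eqref{LYZ20} to unbounded $\Phi$, an alternative is the standard fact that $r_K$ is bi-Lipschitz from $S^{n-1}$ onto $\partial K$. The area formula then gives Jacobian bounds between $du$ and $d\mathcal H^{n-1}$ that are comparable to constants, with the same conclusion.
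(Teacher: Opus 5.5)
Your argument is correct and is essentially the paper's: both rest on a change of variables between spherical Lebesgue measure and $\mathcal H^{n-1}$ on $\partial K$ through the radial map, with a Jacobian bounded above and below by constants depending only on $K$. The paper quotes this formula directly from \cite[Proposition 2.5]{KL2023}, whose weight $\rho_K^n/h_K(n_K(r_K(u)))$ is the reciprocal of yours, whereas you derive it from \eqref{LYZ20} by truncation and monotone convergence and make explicit the choice of a Borel representative; since your truncation uses $g\ge 0$, for a signed $g$ you would apply it to $|g|$, as the paper does.
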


\begin{proof}
By \cite[Proposition 2.5]{KL2023},
\[
\int_{\partial K}|g(x)|\,d\mathcal H^{n-1}(x)
=
\int_{S^{n-1}}
|g(\rho_K(u)u)|
\frac{\rho_K(u)^n}
{h_K(n_K(\rho_K(u)u))}
\,du.
\]
Since $K\in\mathcal K_o^n$, there exist constants $c,C>0$ such that
\[
c\leq
\frac{\rho_K(u)^n}
{h_K(n_K(\rho_K(u)u))}
\leq C
\]
for almost every $u\in S^{n-1}$. It follows that
\[
c\int_{S^{n-1}}|g(\rho_K(u)u)|\,du
\leq
\int_{\partial K}|g(x)|\,d\mathcal H^{n-1}(x)
\leq
C\int_{S^{n-1}}|g(\rho_K(u)u)|\,du.
\]
By the definition of $g_K$, this becomes
\[
c\int_{S^{n-1}}|g_K(u)|\,du
\leq
\int_{\partial K}|g(x)|\,d\mathcal H^{n-1}(x)
\leq
C\int_{S^{n-1}}|g_K(u)|\,du.
\]
Thus, the finiteness of one of the two integrals implies the finiteness
of the other, and the proof is complete.
\end{proof}

\begin{lem}\label{trace-implies-integrability}
Let $g\in L^1_{\mathrm{loc}}(\mathbb R^n)$ and let
$K\in\mathcal K_o^n$. If $K\in\mathcal K_o^n$ is
trace-regular with respect to $g$,
then $
g_K\in L^1(S^{n-1}),
$
and hence $K\in\mathcal K_g^n$.
\end{lem}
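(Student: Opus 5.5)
The plan is to compare $g_K(u)$ with the radial average of $g$ over a thin shell around $\partial K$. Trace-regularity controls the error in this comparison, and local integrability of $g$ controls the average itself.

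Fix $\varepsilon>0$ small enough that $\rho_K(u)-\varepsilon\ge \tfrac12\min_{S^{n-1}}\rho_K>0$ for all $u$. Such an $\varepsilon$ exists because $K\in\mathcal K_o^n$ forces $\rho_K$ to be continuous, positive and bounded. Take $\varepsilon$ also so small that the quantity in \eqref{var} is at most $1$; this is possible by trace-regularity. For every $u$ we have the pointwise inequality
\[
2\varepsilon\, g_K(u)\le \int_{\rho_K(u)-\varepsilon}^{\rho_K(u)+\varepsilon}|g(ru)-g_K(u)|\,dr+\int_{\rho_K(u)-\varepsilon}^{\rho_K(u)+\varepsilon} g(ru)\,dr .
\]
It follows from $g_K=|g_K|\le |g(ru)-g_K(u)|+g(ru)$, which holds since $g\ge0$.

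Next I would integrate this inequality over $S^{n-1}$. The first term integrates to at most $\varepsilon$ by the choice of $\varepsilon$. For the second term, set $a=\tfrac12\min\rho_K$ and $b=\max\rho_K+\varepsilon$, and use $r^{n-1}\ge a^{n-1}$ on the shell together with polar coordinates:
\[
\int_{S^{n-1}}\int_{\rho_K(u)-\varepsilon}^{\rho_K(u)+\varepsilon} g(ru)\,dr\,du\le a^{1-n}\int_{\{a\le|x|\le b\}} g(x)\,dx<\infty .
\]
The right-hand side is finite because $g\in L^1_{\mathrm{loc}}(\mathbb R^n)$ and the annulus is compact. Dividing by $2\varepsilon$ then shows that $\int_{S^{n-1}}g_K\,du<\infty$, so $g_K\in L^1(S^{n-1})$, and by \eqref{gK} this means $K\in\mathcal K_g^n$.

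The only delicate point is measurability. We need $(r,u)\mapsto g(ru)$ to be measurable on $(0,\infty)\times S^{n-1}$, which follows from the polar-coordinate isomorphism. We also need $u\mapsto g_K(u)$ to be measurable. This is implicit in \eqref{radial-trace} being meaningful, and I would take it as part of the hypothesis, that is, interpret \eqref{var} as requiring the integrand to be measurable. If needed, I would note that the integral is taken in the Tonelli sense for the nonnegative measurable integrand. Everything else in the argument is routine.
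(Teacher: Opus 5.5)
Your proof is correct and follows essentially the same route as the paper: you integrate the triangle inequality $g_K(u)\le |g(ru)-g_K(u)|+|g(ru)|$ over the shell and control the shell integral of $g$ by local integrability via polar coordinates. The only differences are minor. You argue directly, fixing $\varepsilon$ so that the normalized trace quantity is at most $1$, whereas the paper assumes $\int_{S^{n-1}}g_K\,du=\infty$ and derives a contradiction. Your measurability remark is a harmless extra, since measurability of $g_K$ is automatic when $g$ is Borel, as the paper assumes.
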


\begin{proof}
Since $o\in\operatorname{int}K$, there exist constants
$0<c_0<C_0<\infty$ such that
\[
c_0\leq \rho_K(u)\leq C_0
\qquad\text{for all }u\in S^{n-1}.
\]
Fix $0<\varepsilon<c_0/2$. Then
\[
\frac{c_0}{2}
\leq \rho_K(u)-\varepsilon
\leq \rho_K(u)+\varepsilon
\leq C_0+\frac{c_0}{2}.
\]
Since $g\in L^1_{\mathrm{loc}}(\mathbb R^n)$, polar coordinates give
\[
\int_{S^{n-1}}
\int_{\frac{c_0}{2}}^{C_0+\frac{c_0}{2}}
|g(ru)|\,r^{n-1}\,dr\,du<\infty.
\]
Moreover, $r\geq c_0/2$ on this interval, and hence
\begin{equation}\label{shell-integrability}
\begin{aligned}
\int_{S^{n-1}}
\int_{\rho_K(u)-\varepsilon}^{\rho_K(u)+\varepsilon}
|g(ru)|\,dr\,du\leq
\left(\frac{2}{c_0}\right)^{n-1}
\int_{S^{n-1}}
\int_{\frac{c_0}{2}}^{C_0+\frac{c_0}{2}}
|g(ru)|\,r^{n-1}\,dr\,du
<\infty.
\end{aligned}
\end{equation}

Suppose, to the contrary, that
\[
\int_{S^{n-1}}|g(\rho_K(u)u)|\,du=\infty.
\]
By the triangle inequality,
\[
|g(ru)-g(\rho_K(u)u)|
\geq |g(\rho_K(u)u)|-|g(ru)|.
\]
Together with \eqref{shell-integrability}, this yields
\[
\begin{aligned}
&\int_{S^{n-1}}
\int_{\rho_K(u)-\varepsilon}^{\rho_K(u)+\varepsilon}
|g(ru)-g(\rho_K(u)u)|\,dr\,du\\
&\qquad\geq
2\varepsilon
\int_{S^{n-1}}|g(\rho_K(u)u)|\,du\\
&\qquad\quad-
\int_{S^{n-1}}
\int_{\rho_K(u)-\varepsilon}^{\rho_K(u)+\varepsilon}
|g(ru)|\,dr\,du
=\infty.
\end{aligned}
\]
This contradicts \eqref{var}. Therefore,
\[
\int_{S^{n-1}}|g(\rho_K(u)u)|\,du<\infty.
\]
By \eqref{gK}, we conclude that
\[
g_K\in L^1(S^{n-1})
\qquad\text{and}\qquad
K\in\mathcal K_g^n.
\]
\end{proof}

We call $g$ radially non-increasing if
\[
g(tx)\leq g(x)
\qquad\text{for every }x\in\mathbb R^n\setminus\{o\}
\text{ and }t\geq1,
\]
and strictly radially decreasing if
\[
g(tx)<g(x)
\qquad\text{for every }x\in\mathbb R^n\setminus\{o\}
\text{ and }t>1.
\]
We say that $g$ is pointwise $\alpha$-homogeneous if
\[
g(ru)=r^\alpha g(u),
\qquad r>0,\ u\in S^{n-1}.
\]

\begin{lem}\label{trace-examples}
Let $g\in L^1_{\mathrm{loc}}(\mathbb R^n)$ be nonnegative. The following
statements hold.
\begin{enumerate}[label=\textup{(\roman*)}]
\item If $g$ is pointwise $\alpha$-homogeneous, then every
$K\in\mathcal K_o^n$ satisfies \eqref{radial-trace}. In particular,
$K\in\mathcal K_g^n$.

\item If $g$ is radially non-increasing and
$r\mapsto g(ru)$ is continuous at $r=\rho_K(u)$ for almost every
$u\in S^{n-1}$, then $K$ satisfies \eqref{radial-trace}. In
particular, $K\in\mathcal K_g^n$.
\end{enumerate}
\end{lem}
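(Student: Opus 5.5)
For (i), substitute the homogeneity relation directly into \eqref{radial-trace}. Writing $g(ru)=r^\alpha g(u)$ and $g_K(u)=\rho_K(u)^\alpha g(u)$ gives
\[
\frac1\varepsilon\int_{S^{n-1}}\int_{\rho_K(u)-\varepsilon}^{\rho_K(u)+\varepsilon}|g(ru)-g_K(u)|\,dr\,du
=\int_{S^{n-1}} g(u)\,\frac1\varepsilon\int_{\rho_K(u)-\varepsilon}^{\rho_K(u)+\varepsilon}|r^\alpha-\rho_K(u)^\alpha|\,dr\,du .
\]
Let $0<c_0\le\rho_K\le C_0$ and take $\varepsilon<c_0/2$. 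The map $r\mapsto r^\alpha$ is Lipschitz on $[c_0/2,C_0+c_0/2]$ with some constant $L_\alpha$. Hence the inner average is at most $2L_\alpha\varepsilon$, and the whole expression is at most $2L_\alpha\varepsilon\int_{S^{n-1}}g(u)\,du$.

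It remains to show $g|_{S^{n-1}}\in L^1(S^{n-1})$. By homogeneity and local integrability,
\[
\int_{S^{n-1}}g(u)\,du\int_1^2 r^{\alpha+n-1}\,dr=\int_{2B\setminus B}g\,dx<\infty,
\]
so $\int_{S^{n-1}} g(u)\,du<\infty$. The bound therefore tends to $0$. Membership in $\mathcal K_g^n$ then follows from Lemma~\ref{trace-implies-integrability}.

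For (ii), set
\[
F_\varepsilon(u)=\frac1\varepsilon\int_{\rho_K(u)-\varepsilon}^{\rho_K(u)+\varepsilon}|g(ru)-g_K(u)|\,dr
\]
and apply dominated convergence on $S^{n-1}$. For almost every $u$, continuity of $r\mapsto g(ru)$ at $\rho_K(u)$ gives $F_\varepsilon(u)\le 2\sup_{|r-\rho_K(u)|\le\varepsilon}|g(ru)-g_K(u)|\to0$.

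For the dominating function, use monotonicity. For $r\ge\rho_K(u)-\varepsilon\ge c_0/2$ we have $0\le g(ru)\le g\bigl(\tfrac{c_0}{2}u\bigr)$, and likewise $g_K(u)\le g\bigl(\tfrac{c_0}{2}u\bigr)$. So $F_\varepsilon(u)\le 4\,g\bigl(\tfrac{c_0}{2}u\bigr)$.

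The main obstacle is that this majorant need not be integrable on the sphere, since $g$ is only locally integrable. To fix this I would move the majorant down in radius. For $s\in[c_0/4,c_0/2]$, monotonicity gives $g\bigl(\tfrac{c_0}{2}u\bigr)\le g(su)$. Averaging over $s$ yields
\[
g\bigl(\tfrac{c_0}{2}u\bigr)\le \frac{4}{c_0}\int_{c_0/4}^{c_0/2}g(su)\,ds .
\]
The right-hand side is integrable over $S^{n-1}$ by polar coordinates and $g\in L^1_{\mathrm{loc}}$, because $s^{n-1}$ is bounded below on $[c_0/4,c_0/2]$. This is the computation already done in \eqref{shell-integrability}.

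Dominated convergence then gives \eqref{radial-trace}. Lemma~\ref{trace-implies-integrability} again yields $K\in\mathcal K_g^n$; alternatively, $g_K$ is integrable directly by the same majorant.
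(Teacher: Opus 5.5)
Your proof is correct and follows the paper's argument: part (i) uses the same homogeneity and Lipschitz estimate, with $\int_{S^{n-1}}g\,du<\infty$ deduced from the shell $2B\setminus B$, and part (ii) uses the same dominated-convergence argument based on radial monotonicity. The only difference is minor: the paper uses Fubini to select a single radius $a_0<c_0$ with $\int_{S^{n-1}}g(a_0u)\,du<\infty$ and bounds the integrand by $2g(a_0u)$, whereas you average the monotone bound over $s\in[c_0/4,c_0/2]$, which gives an explicit integrable majorant without the selection step.
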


\begin{proof}
Since $o\in\operatorname{int}K$, there exist constants
$0<c_0<C_0<\infty$ such that
\[
c_0\leq \rho_K(u)\leq C_0
\qquad\text{for all }u\in S^{n-1}.
\]

For (i), the pointwise $\alpha$-homogeneity of $g$ and polar
coordinates give
\[
\int_{\{1<|x|<2\}}g(x)\,dx
=
\left(\int_1^2 r^{n+\alpha-1}\,dr\right)
\left(\int_{S^{n-1}}g(u)\,du\right)<\infty.
\]
Since the radial factor is finite and strictly positive, we obtain
\begin{equation}\label{sph}
\int_{S^{n-1}}g(u)\,du<\infty.
\end{equation}

Fix $0<\varepsilon<c_0/2$. Then
\[
r,\rho_K(u)\in
\left[\frac{c_0}{2},C_0+\frac{c_0}{2}\right]
\]
whenever
$r\in[\rho_K(u)-\varepsilon,\rho_K(u)+\varepsilon]$. On this
interval,
\[
\left|\frac{d}{ds}s^\alpha\right|
=|\alpha|s^{\alpha-1}\leq C
\]
for some constant $C>0$ independent of $u$ and $\varepsilon$.
Hence, by the mean value theorem,
\[
|r^\alpha-\rho_K(u)^\alpha|
\leq C|r-\rho_K(u)|.
\]
Using the change of variables
$s=r-\rho_K(u)$, we obtain
\begin{align*}
&\int_{\rho_K(u)-\varepsilon}^{\rho_K(u)+\varepsilon}
|g(ru)-g_K(u)|\,dr\\
&\quad=
g(u)
\int_{\rho_K(u)-\varepsilon}^{\rho_K(u)+\varepsilon}
|r^\alpha-\rho_K(u)^\alpha|\,dr\\
&\quad\leq
Cg(u)
\int_{\rho_K(u)-\varepsilon}^{\rho_K(u)+\varepsilon}
|r-\rho_K(u)|\,dr\\
&\quad=
Cg(u)\int_{-\varepsilon}^{\varepsilon}|s|\,ds
=
Cg(u)\varepsilon^2.
\end{align*}
Consequently,
\[
\begin{aligned}
\frac{1}{\varepsilon}
\int_{S^{n-1}}
\int_{\rho_K(u)-\varepsilon}^{\rho_K(u)+\varepsilon}
|g(ru)-g_K(u)|\,dr\,du\leq
C\varepsilon\int_{S^{n-1}}g(u)\,du.
\end{aligned}
\]
Together with \eqref{sph}, this implies
\[
\lim_{\varepsilon\to0^+}
\frac{1}{\varepsilon}
\int_{S^{n-1}}
\int_{\rho_K(u)-\varepsilon}^{\rho_K(u)+\varepsilon}
|g(ru)-g_K(u)|\,dr\,du
=0.
\]
Thus $K$ satisfies \eqref{radial-trace}. By Lemma
\ref{trace-implies-integrability}, we have
$K\in\mathcal K_g^n$.

For (ii), choose $0<a<c_0/2$. Since
$g\in L^1_{\mathrm{loc}}(\mathbb R^n)$, polar coordinates yield
\[
\int_{\{a<|x|<2a\}}g(x)\,dx
=
\int_a^{2a}r^{n-1}
\left(\int_{S^{n-1}}g(ru)\,du\right)\,dr<\infty.
\]
Hence, for almost every $r\in(a,2a)$,
\[
\int_{S^{n-1}}g(ru)\,du<\infty.
\]
Choose such an $a_0\in(a,2a)$. Then $a_0<c_0$. Since $g$ is
radially non-increasing,
\[
g(ru)\leq g(a_0u)
\qquad\text{for all }u\in S^{n-1},\ r\geq a_0.
\]
Choose
\[
0<\varepsilon<\min\left\{\frac{c_0}{2},c_0-a_0\right\}.
\]
Then
\[
r\geq \rho_K(u)-\varepsilon
\geq c_0-\varepsilon>a_0
\]
for every
$r\in[\rho_K(u)-\varepsilon,\rho_K(u)+\varepsilon]$. Therefore,
\[
g(ru)\leq g(a_0u)
\qquad\text{and}\qquad
g(\rho_K(u)u)\leq g(a_0u),
\]
and hence
\[
|g(ru)-g(\rho_K(u)u)|
\leq 2g(a_0u).
\]
It follows that
\[
\begin{aligned}
\frac{1}{\varepsilon}
\int_{S^{n-1}}
\int_{\rho_K(u)-\varepsilon}^{\rho_K(u)+\varepsilon}
|g(ru)-g(\rho_K(u)u)|\,dr\,du\leq
4\int_{S^{n-1}}g(a_0u)\,du<\infty.
\end{aligned}
\]

Moreover, for almost every $u\in S^{n-1}$, the assumed continuity
of $r\mapsto g(ru)$ at $r=\rho_K(u)$ gives
\[
\lim_{\varepsilon\to0^+}
\frac{1}{\varepsilon}
\int_{\rho_K(u)-\varepsilon}^{\rho_K(u)+\varepsilon}
|g(ru)-g(\rho_K(u)u)|\,dr
=0.
\]
Thus the normalized inner integral converges to zero for almost
every $u$ and is dominated by the integrable function
$4g(a_0u)$. Hence, by the dominated convergence theorem,
\[
\lim_{\varepsilon\to0^+}
\frac{1}{\varepsilon}
\int_{S^{n-1}}
\int_{\rho_K(u)-\varepsilon}^{\rho_K(u)+\varepsilon}
|g(ru)-g(\rho_K(u)u)|\,dr\,du
=0.
\]
Therefore, $K$ satisfies \eqref{radial-trace}. By Lemma
\ref{trace-implies-integrability}, we conclude that
$K\in\mathcal K_g^n$.
\end{proof}

\section{Variational formulas}\label{5}
We use the following radial variation lemma, see \cite[Lemma 2.1]{Lj2022}, \cite[Lemma 6.6]{KL2023}, and \cite[Lemma 3.1]{FLX2025}.
\begin{lem}\label{p1}
Let $p\in\mathbb R$, $K\in\mathcal K_o^n$, and
$f\in C(S^{n-1})$. For $t\in\mathbb R$, define
\begin{equation}\label{htd}
h_t(u)=
\begin{cases}
\big(h_K(u)^p+t f(u)\big)^{1/p},& p\ne0,\\[2mm]
h_K(u)e^{t f(u)},& p=0.
\end{cases}
\end{equation}
Then, there exist a small $\delta>0$ and $M>0$ such that,
for $|t|<\delta$,
\begin{equation*}
h_t(u)>0
\qquad\text{for all }u\in S^{n-1},
\end{equation*}
and
\begin{equation*}
\big|\rho_{[h_t]}(u)-\rho_K(u)\big|
\le M|t|,
\qquad u\in S^{n-1}.
\end{equation*}
Additionally, one has, for almost all $u\in S^{n-1}$,
\begin{equation}\label{pv}
\left.
\frac{d\rho_{[h_t]}(u)}{dt}
\right|_{t=0}
=
\lim_{t\to0}
\frac{\rho_{[h_t]}(u)-\rho_K(u)}{t}
=
\begin{cases}
\dfrac{1}{p}
f\big(\alpha_K(u)\big)
h_K\big(\alpha_K(u)\big)^{-p}
\rho_K(u),& p\ne0,\\[3mm]
f\big(\alpha_K(u)\big)\rho_K(u),& p=0.
\end{cases}
\end{equation}
\end{lem}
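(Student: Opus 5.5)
The plan is to work with logarithms of radial functions and read the Wulff shape as a minimisation problem. For $f\in C^+(S^{n-1})$ and $u\in S^{n-1}$,
\[
\rho_{[f]}(u)=\min_{v\in S^{n-1},\,u\cdot v>0}\frac{f(v)}{u\cdot v},
\]
and applied to $f=h_K$ this gives $\rho_K$, because $[h_K]=K$.

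\textbf{Positivity and the Lipschitz bound.} Since $h_K$ is bounded between two positive constants and $f$ is bounded, I would first choose $\delta>0$ so that $h_t>0$ for $|t|<\delta$, in both cases $p\ne0$ and $p=0$. Writing $\log h_t=\log h_K+\frac1p\log(1+tf h_K^{-p})$, or $\log h_K+tf$ when $p=0$, I expect a constant $C$ with
\[
\|\log h_t-\log h_K\|_\infty\le C|t| .
\]
This gives $e^{-C|t|}h_K\le h_t\le e^{C|t|}h_K$. Monotonicity and homogeneity of the Wulff shape then give $e^{-C|t|}K\subset[h_t]\subset e^{C|t|}K$, so $|\rho_{[h_t]}-\rho_K|\le \rho_K(e^{C|t|}-1)\le M|t|$.

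\textbf{The derivative.} Put $\psi=\frac1p f h_K^{-p}$ for $p\ne0$ and $\psi=f$ for $p=0$. A Taylor expansion should give
\[
\log h_t=\log h_K+t\psi+o(t)
\]
uniformly on $S^{n-1}$. Fix $u\notin\omega_K$. Then
\[
\log\rho_{[h_t]}(u)=\min_{v}\bigl(\log h_t(v)-\log(u\cdot v)\bigr).
\]
At $t=0$ the minimisers are exactly the $v$ with $r_K(u)\in H_K(v)$, so the minimiser is unique and equals $\alpha_K(u)$. For the upper bound I would test $v=\alpha_K(u)$, which gives
\[
\log\rho_{[h_t]}(u)\le\log\rho_K(u)+t\psi(\alpha_K(u))+o(t).
\]
For the lower bound I would take a minimiser $v_t$ for $h_t$. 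Because $h_t$ is uniformly bounded above and below, these minimisers stay in a compact region $\{u\cdot v\ge c>0\}$. A limit argument then shows that every limit point of $v_t$ is a minimiser at $t=0$, so $v_t\to\alpha_K(u)$. Evaluating $\log h_K-\log(u\cdot\,\cdot)$ at $v_t$ then gives
\[
\log\rho_{[h_t]}(u)\ge\log\rho_K(u)+t\psi(v_t)+o(t),
\]
and continuity of $\psi$ gives $\psi(v_t)\to\psi(\alpha_K(u))$.

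Combining the two bounds, $\frac{d}{dt}\log\rho_{[h_t]}(u)\big|_{t=0}=\psi(\alpha_K(u))$. Multiplying by $\rho_K(u)$ gives \eqref{pv}, and this holds off $\omega_K$, a null set.

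\textbf{Main obstacle.} The delicate part is the lower bound: the minimisers $v_t$ must stay away from the equator $u\cdot v=0$, they must converge to $\alpha_K(u)$, and the $o(t)$ error must be uniform in $v$. Each of these rests on the uniform two-sided bounds on $h_t$ and on the uniform Taylor expansion, so I would establish those first.
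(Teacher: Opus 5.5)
Your proposal is correct and complete. The paper gives no proof of this lemma; it quotes it from the literature, so the natural comparison is with the standard argument there (Huang--Lutwak--Yang--Zhang, extended to general $p$ by Lutwak--Yang--Zhang).

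All the steps in your plan go through:
\begin{itemize}
\item the min-formula $\rho_{[f]}(u)=\min_{u\cdot v>0}f(v)/(u\cdot v)$;
\item the uniform expansion $\log h_t=\log h_K+t\psi+O(t^2)$;
\item the sandwich $e^{-C|t|}K\subset[h_t]\subset e^{C|t|}K$;
\item the envelope (Danskin-type) argument, with minimisers $v_t$ confined to $\{u\cdot v\ge c\}$ and forced to converge to the unique minimiser $\alpha_K(u)$ when $u\notin\omega_K$.
\end{itemize}

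The literature organises the same mechanism through polarity. Writing $\langle\rho\rangle$ for the convex hull of $\{\rho(u)u\}$, one has
\[
[h_t]^*=\langle 1/h_t\rangle,
\qquad
\log\rho_{[h_t]}=-\log h_{\langle 1/h_t\rangle}.
\]
One then differentiates the support function of this convex hull via the max-formula $h_{\langle\rho\rangle}(v)=\max_u\rho(u)\,u\cdot v$. Finally, one translates back to $K$ using $\alpha^*_{K^*}=\alpha_K$. General $p$ is reduced to the logarithmic case exactly as you do, by writing $\log h_t=\log h_K+\frac tp fh_K^{-p}+o(t)$ uniformly.

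Your route runs the envelope argument directly on the Wulff shape. It is therefore self-contained and needs neither polarity nor the radial/reverse-radial Gauss map duality, and the exceptional set appears directly as $\omega_K$. The polar route has a different advantage: a single lemma about convex hulls serves both for Wulff-shape variations like this one and for the radial variations $\langle\rho_t\rangle$ used elsewhere in the dual theory.
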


\begin{thm}\label{pda0}
Let $p,q\in\mathbb R$, $K\in\mathcal K_o^n$, $Q,C\in\mathcal S_o^n$, and $f\in C(S^{n-1})$. Assume $g\in L^1_{\mathrm{loc}}(\mathbb R^n)$ and that $K$ satisfies \eqref{radial-trace}. For $h_t$ in \eqref{htd},
\begin{align}\label{Vf}
\left.
\frac{d}{dt}\widetilde D_q^\mu([h_t],Q,C)
\right|_{t=0}
&=
\begin{cases}
\displaystyle
\frac1p
\int_{S^{n-1}}f(v)\,
d\widetilde C_{p,q}^\mu(K,Q,v),
& p\ne0,\\[3mm]
\displaystyle
\int_{S^{n-1}}f(v)\,
d\widetilde C_{0,q}^\mu(K,Q,v),
& p=0.
\end{cases}
\end{align}
\end{thm}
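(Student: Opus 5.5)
The plan is to differentiate under the integral sign in the polar representation \eqref{Vqmu}. Write $\rho_t=\rho_{[h_t]}$ and set
\[
F(u,s):=\frac1n\rho_Q^{n-q}(u)\int_{\rho_C(u)}^{s}r^{q-1}g(ru)\,dr .
\]
Then
\[
\widetilde D_q^\mu([h_t],Q,C)-\widetilde D_q^\mu(K,Q,C)
=\frac1n\int_{S^{n-1}}\rho_Q^{n-q}(u)\int_{\rho_K(u)}^{\rho_t(u)}r^{q-1}g(ru)\,dr\,du .
\]
Inside the inner integral, replace $g(ru)$ by $g_K(u)$ and control the error. The decomposition is
\[
\int_{\rho_K}^{\rho_t}r^{q-1}g(ru)\,dr
=g_K(u)\int_{\rho_K}^{\rho_t}r^{q-1}\,dr+\int_{\rho_K}^{\rho_t}r^{q-1}\bigl(g(ru)-g_K(u)\bigr)\,dr .
\]

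The first term is the main term. By Lemma~\ref{p1}, $|\rho_t-\rho_K|\le M|t|$ for $|t|<\delta$. Since $\rho_K$ is bounded away from $0$ and $\infty$, the map $s\mapsto s^{q-1}$ is uniformly bounded on $[c_0/2,\,C_0+1]$, so the function
\[
\frac1t\int_{\rho_K}^{\rho_t}r^{q-1}\,dr
\]
is bounded by a constant times $M$. By \eqref{pv} and the mean value theorem, it converges a.e. to
\[
\rho_K^{q-1}\cdot\frac1p f(\alpha_K)h_K^{-p}(\alpha_K)\rho_K
\qquad(\text{respectively } f(\alpha_K)\rho_K^{q}\ \text{for } p=0).
\]
Lemma~\ref{trace-implies-integrability} gives $g_K\in L^1(S^{n-1})$, and $\rho_Q^{n-q}$ is bounded. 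So dominated convergence applies with dominating function $\mathrm{const}\cdot g_K$, and the first term's contribution converges to
\[
\frac1{pn}\int_{S^{n-1}}f(\alpha_K(u))\,h_K^{-p}(\alpha_K(u))\,\rho_K^q(u)\,\rho_Q^{n-q}(u)\,g_K(u)\,du .
\]
By \eqref{23} of Proposition~\ref{representation}, applied with the bounded Borel function $f$, this equals $\frac1p\int f\,d\widetilde C_{p,q}^\mu(K,Q,\cdot)$. The case $p=0$ is identical without the factor $1/p$.

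The error term is the main obstacle, and this is exactly where \eqref{radial-trace} is used. Set $\varepsilon=M|t|$. Then $[\rho_K,\rho_t]\subset[\rho_K-\varepsilon,\rho_K+\varepsilon]$, and $r^{q-1}$ is bounded there by some constant $c_1$ once $\varepsilon<c_0/2$. Hence
\[
\frac1{|t|}\int_{S^{n-1}}\rho_Q^{n-q}\left|\int_{\rho_K}^{\rho_t}r^{q-1}(g(ru)-g_K(u))\,dr\right|du
\le b_Q c_1 M\cdot\frac1\varepsilon\int_{S^{n-1}}\int_{\rho_K-\varepsilon}^{\rho_K+\varepsilon}|g(ru)-g_K(u)|\,dr\,du .
\]
The right-hand side tends to $0$ by \eqref{var}. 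The only subtleties are measurability of $u\mapsto\rho_t(u)$, which holds since it is continuous, and the use of absolute values. These are routine.

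Combining the main term and the error term gives the two-sided derivative at $t=0$, which is \eqref{Vf}. Note that no derivative of $g$ is needed: trace-regularity alone replaces the pointwise continuity that Livshyts and Wu assumed.
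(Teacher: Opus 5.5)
Your proof is correct and follows essentially the same route as the paper. You split the difference quotient into a main term, handled by dominated convergence with a multiple of $g_K\in L^1(S^{n-1})$ as dominating function (Lemma~\ref{trace-implies-integrability}), and a remainder that \eqref{radial-trace} sends to zero; the limit is then identified via \eqref{23}.

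The only difference is in how the weight is treated. You keep $r^{q-1}$ in the main term and differentiate $\int_{\rho_K}^{\rho_t}r^{q-1}\,dr$ directly by the mean value theorem. The paper instead freezes $w(u,r)=\rho_Q^{n-q}(u)r^{q-1}$ at $r=\rho_K(u)$ and absorbs the variation of $w$ into the remainder through a modulus of continuity, so your split is slightly more economical.
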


\begin{proof}
Write $\rho_t=\rho_{[h_t]}$, $\rho_0=\rho_K$, and
\[
w(u,r)=\rho_Q(u)^{n-q}r^{q-1}.
\]

Lemma~\ref{p1} provides $\delta_0>0$ and $M>0$ such that, for $|t|<\delta_0$,
\[
h_t(u)>0\qquad\text{for all }u\in S^{n-1},
\]
and
\[
|\rho_t(u)-\rho_0(u)|\le M|t|\qquad\text{for all }u\in S^{n-1}.
\]
Since $K\in\mathcal K_o^n$, the radial function $\rho_0$ is continuous and strictly positive on $S^{n-1}$, so
\[
m:=\min_{u\in S^{n-1}}\rho_0(u)>0.
\]
Choose $\delta\in(0,\delta_0)$ so small that $M\delta<m$. Then, for $|t|<\delta$,
\[
\rho_0(u)-M|t|\ge m-M|t|>0\qquad\text{for all }u\in S^{n-1}.
\]
Fix such a $\delta$ and $M$. For $|t|<\delta$, the radial segment joining $\rho_0(u)$ and $\rho_t(u)$ lies in the compact annulus
\[
[a,b]:=[m-M\delta,\ \max_{S^{n-1}}\rho_0+M\delta]\subset(0,\infty),
\]
uniformly in $u\in S^{n-1}$. Hence all integrals below are taken over a compact subset of $\mathbb R^n\setminus\{0\}$.

By \eqref{Vqmu}, for $0<|t|<\delta$,
\begin{align*}
&\frac{\widetilde D_q^\mu([h_t],Q,C)-\widetilde D_q^\mu(K,Q,C)}{t}\\
&\quad=\frac1{nt}\int_{S^{n-1}}\int_{\rho_0(u)}^{\rho_t(u)}w(u,r)g(ru)\,dr\,du\\
&\quad=\frac1n\int_{S^{n-1}}w(u,\rho_0(u))g(\rho_0(u)u)
\frac{\rho_t(u)-\rho_0(u)}{t}\,du+R_t,
\end{align*}
where
\begin{equation}\label{Rt}
R_t
=
\frac1{nt}\int_{S^{n-1}}\int_{\rho_0(u)}^{\rho_t(u)}
\Bigl[w(u,r)g(ru)-w(u,\rho_0(u))g(\rho_0(u)u)\Bigr]\,dr\,du.
\end{equation}

To estimate $R_t$, we use the decomposition
\begin{align}\label{wu}
&w(u,r)g(ru)-w(u,\rho_0(u))g(\rho_0(u)u)\notag\\
&\quad=w(u,r)\bigl(g(ru)-g(\rho_0(u)u)\bigr)
+\bigl(w(u,r)-w(u,\rho_0(u))\bigr)g(\rho_0(u)u).
\end{align}
Since $[\rho_0(u)-M|t|,\rho_0(u)+M|t|]\subset[a,b]$ for $|t|<\delta$, the function $w$ is bounded on $S^{n-1}\times[a,b]$. Let
\[
\|w\|_\infty=\sup_{u\in S^{n-1},\,r\in[a,b]}|w(u,r)|<\infty.
\]
Define the modulus of continuity of $w$ on $S^{n-1}\times[a,b]$ by
\[
\varpi(\eta)=\sup_{\substack{u\in S^{n-1},\ r,s\in[a,b]\\|r-s|\le\eta}}
|w(u,r)-w(u,s)|,
\qquad \varpi(\eta)\longrightarrow0\ \text{as }\eta\to0.
\]
Then, for $0<|t|<\delta$, it follows from \eqref{Rt} and
\eqref{wu} that
\begin{align*}
n|R_t|
\le{}&
\frac{\|w\|_\infty}{|t|}
\int_{S^{n-1}}\int_{\rho_0(u)-M|t|}^{\rho_0(u)+M|t|}
\bigl|g(ru)-g(\rho_0(u)u)\bigr|\,dr\,du\\
&\quad
+\frac{\varpi(M|t|)}{|t|}
\int_{S^{n-1}}\int_{\rho_0(u)-M|t|}^{\rho_0(u)+M|t|}
g(\rho_0(u)u)\,dr\,du.
\end{align*}
The length of the inner interval is at most $2M|t|$, so
\begin{equation}\label{Rt-est}
n|R_t|
\le
\frac{\|w\|_\infty}{|t|}\int_{S^{n-1}}\int_{\rho_0(u)-M|t|}^{\rho_0(u)+M|t|}
\bigl|g(ru)-g(\rho_0(u)u)\bigr|\,dr\,du
+2M\varpi(M|t|)\|g_K\|_{L^1(S^{n-1})}.
\end{equation}

Set $\varepsilon=M|t|$. Then $\varepsilon\to0$ as $t\to0$. By the trace condition \eqref{radial-trace} at $K$, the first term in \eqref{Rt-est} tends to zero. The second term tends to zero because $\varpi(M|t|)\to0$ and $g_K\in L^1(S^{n-1})$ by Lemma \ref{trace-implies-integrability}. Consequently,
\[
R_t\longrightarrow0\qquad\text{as }t\to0.
\]

For the main integral, Lemma~\ref{p1} gives the uniform bound
\[
\left|w(u,\rho_0(u))g(\rho_0(u)u)\frac{\rho_t(u)-\rho_0(u)}{t}\right|
\le
M\|w\|_\infty g(\rho_0(u)u),
\qquad u\in S^{n-1},
\]
and $M\|w\|_\infty g(\rho_0(u)u)\in L^1(S^{n-1})$. Hence, by the dominated convergence theorem and \eqref{pv},
\[
\left.\frac{d}{dt}\widetilde D_q^\mu([h_t],Q,C)\right|_{t=0}
=
\frac1n\int_{S^{n-1}}w(u,\rho_0(u))g(\rho_0(u)u)\rho^{\prime}_0(u)\,du,
\]
where $\rho^{\prime}_0(u)=\left.\dfrac{d\rho_{t}(u)}{dt}\right|_{t=0}$ denotes the almost-everywhere derivative in \eqref{pv}.

Substituting \eqref{pv} and using $g(\rho_0(u)u)=g_K(u)$ yields
\[
\left.
\frac{d}{dt}\widetilde D_q^\mu([h_t],Q,C)
\right|_{t=0}=
\begin{cases}
\dfrac1{np}\displaystyle\int_{S^{n-1}}
f(\alpha_K(u))h_K(\alpha_K(u))^{-p}
\rho_K(u)^q\rho_Q(u)^{n-q}g_K(u)\,du,&p\ne0,\\[3mm]
\dfrac1n\displaystyle\int_{S^{n-1}}
f(\alpha_K(u))\rho_K(u)^q\rho_Q(u)^{n-q}g_K(u)\,du,&p=0.
\end{cases}
\]
The radial representation \eqref{23} then gives \eqref{Vf}.
\end{proof}

Theorem~\ref{main} follows by taking $f=h_L^p$ for $p\ne0$, and $f=\log h_L$ for $p=0$.
\begin{cor}\label{mm}
Let $q\in\mathbb R$, $K,L\in\mathcal K_o^n$, and $Q,C\in\mathcal S_o^n$. Assume $g\in L^1_{\mathrm{loc}}(\mathbb R^n)$ and that $K$ satisfies \eqref{radial-trace}. Then
\begin{equation}\label{vv}
\left.\frac{d}{dt}\widetilde D_q^\mu((1-t)\cdot K+_0t\cdot L,Q,C)\right|_{t=0}
=\int_{S^{n-1}}\log\frac{h_L}{h_K}\,d\widetilde C_{0,q}^\mu(K,Q).
\end{equation}
\end{cor}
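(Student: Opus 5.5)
We reduce Corollary~\ref{mm} to Theorem~\ref{pda0} with $p=0$, taking $f=\log h_L-\log h_K=\log(h_L/h_K)$, which lies in $C(S^{n-1})$ because $h_K$ and $h_L$ are continuous and strictly positive on $S^{n-1}$.

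First we identify the Wulff shapes. By \eqref{eq:2.12}, for every $t\in\mathbb R$,
\[
(1-t)\cdot K+_0t\cdot L=\bigl[h_K^{1-t}h_L^{t}\bigr].
\]
On the other hand, the $p=0$ case of \eqref{htd} gives
\[
h_t=h_Ke^{tf}=h_K\Bigl(\frac{h_L}{h_K}\Bigr)^{t}=h_K^{1-t}h_L^t .
\]
Hence $[h_t]=(1-t)\cdot K+_0t\cdot L$ for all $t$, and the two one-parameter families coincide as sets. No restriction on $t$ is needed, since $h_t>0$ always.

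Next we apply Theorem~\ref{pda0}. Its hypotheses are exactly those of the corollary: $g\in L^1_{\mathrm{loc}}(\mathbb R^n)$, $K$ satisfies \eqref{radial-trace}, $Q,C\in\mathcal S_o^n$, and $f$ is continuous. In the case $p=0$ it yields
\[
\left.\frac{d}{dt}\widetilde D_q^\mu([h_t],Q,C)\right|_{t=0}=\int_{S^{n-1}}f\,d\widetilde C_{0,q}^\mu(K,Q,\cdot).
\]
Substituting $f=\log(h_L/h_K)$ and $[h_t]=(1-t)\cdot K+_0t\cdot L$ gives \eqref{vv}.

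The only point requiring care is the integrability of the right-hand side. Since $f$ is bounded, this follows because $\widetilde C_{0,q}^\mu(K,Q,\cdot)$ is a finite measure: by Lemma~\ref{trace-implies-integrability}, $K\in\mathcal K_g^n$, and then \eqref{23} shows the total mass is finite. There is no real obstacle beyond verifying that the two definitions of the $L_0$ combination agree.
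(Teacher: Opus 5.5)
Your proposal is correct and follows the paper's proof: both apply Theorem~\ref{pda0} with $p=0$ and $f=\log(h_L/h_K)$. You also spell out the identification $h_Ke^{tf}=h_K^{1-t}h_L^t$, so that $[h_t]=(1-t)\cdot K+_0t\cdot L$, and the finiteness of $\widetilde C_{0,q}^\mu(K,Q,\cdot)$; the paper leaves both implicit.
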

\begin{proof}
Apply Theorem~\ref{pda0} with $p=0$ and $f=\log(h_L/h_K)$.
\end{proof}

Recall from \eqref{VD} that
\[
\widetilde D_q^\mu(K,Q,C)
=
\widetilde V_q^\mu(K,Q)
-
\widetilde V_q^\mu(C,Q)
\]
whenever $|x|^{q-n}g(x)\in L^1_{\mathrm{loc}}(\mathbb R^n)$.
Thus, Theorem~\ref{pda0} immediately yields the following
corollary.

\begin{cor}\label{pda1}
Let $p,q\in\mathbb R$, $K\in\mathcal K_o^n$,
$Q\in\mathcal S_o^n$, and $f\in C(S^{n-1})$. Assume that
$|x|^{q-n}g(x)\in L^1_{\mathrm{loc}}(\mathbb R^n)$ and that
$K$ satisfies \eqref{radial-trace}. Then
\begin{equation}\label{Vf1}
\left.\frac{d}{dt}\widetilde V_q^\mu([h_t],Q)\right|_{t=0}
=
\begin{cases}
\dfrac1p\displaystyle\int_{S^{n-1}}
f(v)\,d\widetilde C_{p,q}^\mu(K,Q,v),
& p\ne0,\\[6pt]
\displaystyle\int_{S^{n-1}}
f(v)\,d\widetilde C_{0,q}^\mu(K,Q,v),
& p=0.
\end{cases}
\end{equation}
\end{cor}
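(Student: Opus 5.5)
The statement is Corollary~\ref{pda1}. I would reduce it directly to Theorem~\ref{pda0} through the identity \eqref{VD}. Fix any auxiliary $C\in\mathcal S_o^n$; the natural choice is $C=K$, or $C=B$. Under the assumption $|x|^{q-n}g(x)\in L^1_{\mathrm{loc}}(\mathbb R^n)$, the quantity $\widetilde V_q^\mu(M,Q)$ is finite for every star body $M$, as shown right after \eqref{Vqmu1}. Hence for each $t$ with $|t|<\delta$, where $\delta$ comes from Lemma~\ref{p1} so that $h_t>0$ and $[h_t]\in\mathcal K_o^n$, we have
\[
\widetilde V_q^\mu([h_t],Q)=\widetilde D_q^\mu([h_t],Q,C)+\widetilde V_q^\mu(C,Q).
\]
The second term does not depend on $t$. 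So the derivative of the left-hand side at $t=0$ exists exactly when that of $\widetilde D_q^\mu([h_t],Q,C)$ does, and the two derivatives agree.

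It then remains to check that the hypotheses of Theorem~\ref{pda0} hold. It needs $g\in L^1_{\mathrm{loc}}(\mathbb R^n)$, whereas we are given only that $|x|^{q-n}g\in L^1_{\mathrm{loc}}$. Near the origin these conditions differ, but away from the origin they are equivalent, because $|x|^{n-q}$ is bounded above and below on compact annuli. I would check that the proof of Theorem~\ref{pda0} uses $g$ only on the compact annulus $\{a\le|x|\le b\}$ with $a>0$, which is exactly what that proof observes. The same holds for Lemma~\ref{trace-implies-integrability}, whose only use of local integrability is on the annulus $c_0/2\le|x|\le C_0+c_0/2$. So local integrability of $g$ on $\mathbb R^n\setminus\{o\}$ suffices. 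This follows from $|x|^{q-n}g\in L^1_{\mathrm{loc}}$ because $g=|x|^{n-q}\cdot|x|^{q-n}g$ and $|x|^{n-q}$ is bounded on compact subsets away from $o$.

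Alternatively, the representation \eqref{Vqmu} of $\widetilde D_q^\mu$ in polar coordinates involves only the region between $\partial C$ and $\partial K$. Choosing $C=K$ makes this region lie in a thin shell around $\partial K$ for the perturbed bodies. The formula of Theorem~\ref{pda0} applied with $C=K$ then gives \eqref{Vf1} at once. The only point requiring any care, and so the main obstacle, is this mismatch between the two integrability hypotheses. It is resolved by the annulus observation above, possibly after replacing $g$ by $g\mathbf 1_{\{|x|\ge a\}}$ for small $a>0$. This replacement changes neither the curvature measure $\widetilde C_{p,q}^\mu(K,Q,\cdot)$ nor the difference quotients for $|t|<\delta$, and the modified density is genuinely in $L^1_{\mathrm{loc}}(\mathbb R^n)$. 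It also preserves \eqref{radial-trace}, since $\rho_K\ge m>a$.
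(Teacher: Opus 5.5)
Your proposal is correct and follows the paper's own route: the paper obtains Corollary~\ref{pda1} directly from Theorem~\ref{pda0} through the identity \eqref{VD}, and the $t$-independent term $\widetilde V_q^\mu(C,Q)$ drops out of the derivative. Your extra check is a genuine improvement, and the paper passes over it silently: Theorem~\ref{pda0} and Lemma~\ref{trace-implies-integrability} use the integrability of $g$ only on compact annuli away from the origin, which matters because for $q>n$ the hypothesis $|x|^{q-n}g\in L^1_{\mathrm{loc}}(\mathbb R^n)$ does not imply $g\in L^1_{\mathrm{loc}}(\mathbb R^n)$.
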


Combining \eqref{s1} and \eqref{muK} with Corollary \ref{pda1},
taking $f=h_L^p$ when $p\ne0$ and $f=\log h_L$ when $p=0$,
we obtain the following weighted volume variation formula,
valid for all $p\in\mathbb R$.

\begin{cor}\label{mu22}
Let $p\in\mathbb R$ and $K,L\in\mathcal K_o^n$. Assume that
$g\in L^1_{\mathrm{loc}}(\mathbb R^n)$ and that $K$
satisfies \eqref{radial-trace}. Then
\begin{equation}\label{cp}
\left.\frac{d}{dt}\mu(K+_p t\cdot L)\right|_{t=0}
=
\int_{S^{n-1}}h_L^{\bar p}(v)\,dS_p^\mu(K,v).
\end{equation}
\end{cor}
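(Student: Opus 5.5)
The plan is to specialize Corollary~\ref{pda1} to $q=n$ and then read off the weighted volume and the weighted surface area measure using \eqref{muK} and \eqref{s1}. Fix any $Q\in\mathcal S_o^n$; taking $Q=B$ is the simplest choice. When $q=n$, the hypothesis $|x|^{q-n}g(x)\in L^1_{\mathrm{loc}}(\mathbb R^n)$ of Corollary~\ref{pda1} is just $g\in L^1_{\mathrm{loc}}(\mathbb R^n)$, which we assume. The hypothesis that $K$ satisfies \eqref{radial-trace} is also assumed. In addition, Lemma~\ref{trace-implies-integrability} gives $K\in\mathcal K_g^n$, so the measures $\widetilde C^\mu_{p,n}(K,B,\cdot)$ and $S^\mu_p(K,\cdot)$ are finite. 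Hence \eqref{s1} applies and gives $\widetilde C^\mu_{p,n}(K,B,\cdot)=\frac1n S^\mu_p(K,\cdot)$. By \eqref{muK}, we also have $\widetilde V^\mu_n(M,B)=\frac1n\mu(M)$ for every convex body $M$.

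Next we choose $f$ so that the Wulff shape $[h_t]$ of \eqref{htd} is exactly the combination appearing in \eqref{cp}. For $p\ne0$, take $f=h_L^p\in C(S^{n-1})$. Then $h_t=(h_K^p+t h_L^p)^{1/p}$, and by \eqref{eq:2.11} this gives $[h_t]=K+_p t\cdot L$. Lemma~\ref{p1} guarantees that $h_t>0$ for $|t|<\delta$, so the combination is well defined for small negative $t$ as well, and the derivative is two-sided. For $p=0$, take $f=\log h_L$, which is continuous because $h_L>0$. Then $h_t=h_K e^{t\log h_L}=h_K h_L^t$, and \eqref{eq:2.12} with $s=1$ gives $[h_t]=K+_0 t\cdot L$.

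Substituting into \eqref{Vf1} gives, for $p\ne0$,
\[
\frac1n\left.\frac{d}{dt}\mu(K+_pt\cdot L)\right|_{t=0}
=\frac1p\int_{S^{n-1}}h_L^p\,d\widetilde C^\mu_{p,n}(K,B,\cdot)
=\frac1{n}\int_{S^{n-1}}\frac1p h_L^p\,dS^\mu_p(K,\cdot).
\]
For $p=0$ the analogous computation yields $\frac1n\int_{S^{n-1}}\log h_L\,dS^\mu_0(K,\cdot)$. Multiplying by $n$ and recalling that $s^{\bar p}=\frac1p s^p$ for $p\ne0$ and $s^{\bar 0}=\log s$ gives \eqref{cp}. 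The only point needing care is that the identifications of $[h_t]$ with $K+_pt\cdot L$ hold for both signs of small $t$; this is settled by the positivity statement in Lemma~\ref{p1} together with the Wulff-shape definitions \eqref{eq:2.11} and \eqref{eq:2.12}. All the analytic content is already contained in Theorem~\ref{pda0}.
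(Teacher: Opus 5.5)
Your proposal is correct and follows the paper's own route: specialize Corollary~\ref{pda1} to $q=n$, $Q=B$, identify $\widetilde V_n^\mu$ and $\widetilde C^\mu_{p,n}(K,B,\cdot)$ via \eqref{muK} and \eqref{s1}, and take $f=h_L^p$ for $p\ne0$ and $f=\log h_L$ for $p=0$. Your added checks, namely $K\in\mathcal K_g^n$ via Lemma~\ref{trace-implies-integrability} and the well-definedness of $K+_pt\cdot L$ for small negative $t$, make explicit steps that the paper leaves implicit.
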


\section{The even $(p,q)$-weighted dual Minkowski problem}\label{6}

\begin{thm}\label{main2}
Let $p>0$, $q\in\mathbb R$, and $\beta>0$. Let
$Q\in\mathcal S_e^n$, and let $\mu$ be a measure with density $g$
belonging to $\Lambda_{p,q,\beta}$. Assume that every
$K\in\mathcal K_o^n$ satisfies \eqref{radial-trace}. Let $\nu$ be
a finite even Borel measure on $S^{n-1}$ that is not concentrated
on any great subsphere. Then there exists $K_0\in\mathcal K_e^n$
such that
\begin{equation}\label{eq:minkowski-equation2}
d\nu
=
\frac{1}{p}
\widetilde V_q^\mu(K_0,Q)^{\beta-1}
\,d\widetilde C_{p,q}^\mu(K_0,Q,\cdot).
\end{equation}
\end{thm}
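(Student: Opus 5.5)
We will argue variationally, following the scheme of Kryvonos and Langharst \cite{KL2023} for $q=n$. Consider the functional on even positive continuous functions
\[
\Phi(f)=\frac{1}{\beta}\,\widetilde V_q^\mu([f],Q)^{\beta}\quad\text{subject to}\quad \int_{S^{n-1}} f^p\,d\nu = 1 ,
\]
or, equivalently, the scale-free problem of maximizing (or minimizing, depending on the sign conventions) $\widetilde V_q^\mu([f],Q)^\beta$ over even $f\in C^+(S^{n-1})$ normalized by $\int f^p\,d\nu=1$. Since $h_{[f]}\le f$ and $\widetilde V_q^\mu$ is monotone under inclusion, the constraint is only helped by replacing $f$ by $h_{[f]}$. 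The problem therefore reduces to one over support functions of origin-symmetric convex bodies $K$ with $\int h_K^p\,d\nu=1$. We will maximize $\widetilde V_q^\mu(K,Q)$ over this class.

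\textbf{Compactness.} Because $\nu$ is even and not concentrated on any great subsphere, there is $c_\nu>0$ with $\int_{S^{n-1}}|u\cdot v|^p\,d\nu(v)\ge c_\nu$ for all $u\in S^{n-1}$. If $R=\max\rho_K=\rho_K(u_0)$, then by symmetry $h_K(v)\ge R|u_0\cdot v|$. Hence $1\ge R^p c_\nu$, so bodies in the class are uniformly bounded. By Blaschke selection, a maximizing sequence $K_i$ converges to an origin-symmetric compact convex $K_0$.

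\textbf{Nondegeneracy.} This is the main obstacle: we must show that $K_0$ has nonempty interior. Here we use the two limits in \eqref{admissibility}. First, a small ball $rB$ normalized so that $r^p\nu(S^{n-1})=1$ gives a positive lower bound $\widetilde V_q^\mu(K_i,Q)\ge a_Q A_q(r)/n>0$, using strict positivity of $g$ and \eqref{qn}.

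Now suppose $K_0$ is degenerate, that is, contained in a hyperplane $u_0^\perp$. Then $K_i$ lies in thin slabs $\{|x\cdot u_0|\le \varepsilon_i\}$ intersected with a ball $R B$. We would then like to show $\widetilde V_q^\mu(K_i,Q)\to0$: the slabs shrink to a null set, and $|x|^{q-n}g$ is integrable on $RB$, so dominated convergence gives $\int_{K_i}\rho_Q^{n-q}\,d\mu\le b_Q\int_{RB\cap\text{slab}_i}|x|^{q-n}g\to0$. This contradicts the lower bound.

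The growth conditions involving $\beta$ and $p$ enter when we compare the normalization with scaling. Alternatively, we run the optimization in the scale-invariant form
\[
\sup_K \frac{\widetilde V_q^\mu(K,Q)^\beta}{\int h_K^p\,d\nu},
\]
where the conditions $A_q(r)^\beta/r^p\to\infty$ as $r\to0^+$ and $A_q(r)^\beta/r^p\to0$ as $r\to\infty$ prevent the supremum from being attained at infinitesimal or infinite scales. We will choose whichever formulation makes the degeneracy exclusion cleanest. Continuity of $K\mapsto\widetilde V_q^\mu(K,Q)$ under Hausdorff convergence follows from dominated convergence with the local integrability of $|x|^{q-n}g$. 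Hence $K_0\in\mathcal K_e^n$ is a maximizer.

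\textbf{Euler--Lagrange equation.} For even $f\in C(S^{n-1})$, let $h_t=(h_{K_0}^p+tf)^{1/p}$. The Wulff shapes $[h_t]$ are origin-symmetric and satisfy $\int h_{[h_t]}^p\,d\nu\le\int h_t^p\,d\nu$. This inequality, together with the maximality of $K_0$, shows that $t=0$ is a maximizer of
\[
t\mapsto \log\widetilde V_q^\mu([h_t],Q)^{\beta}-\lambda\log\int h_t^p\,d\nu
\]
for the appropriate $\lambda$ (in the scale-invariant form, $\lambda=1$).

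Differentiating with Corollary~\ref{pda1}, which applies because every $K$ satisfies \eqref{radial-trace} and $|x|^{q-n}g\in L^1_{\mathrm{loc}}$, gives
\[
\frac{\beta}{p}\,\widetilde V_q^\mu(K_0,Q)^{-1}\int f\,d\widetilde C_{p,q}^\mu(K_0,Q,\cdot)=\frac{\int f\,d\nu}{\int h_{K_0}^p\,d\nu}.
\]
Both $\nu$ and the curvature measure are even (by the symmetry of $K_0$, $Q$ and $g$), so this identity for all even $f$ yields equality of the measures up to a constant. Finally we rescale $K_0$ by a dilation. The map $s\mapsto \widetilde V_q^\mu(sK_0,Q)^{\beta-1}\cdot(\text{scaling of }\widetilde C_{p,q}^\mu)$ is continuous, and by the limits in \eqref{admissibility} it ranges from $0$ to $\infty$. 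Hence some dilate attains the exact constant $1$ in \eqref{eq:minkowski-equation2}. If that range argument is awkward, we will instead fix the constant by imposing the normalization $\int h^p\,d\nu=\widetilde V_q^\mu(K,Q)^\beta/\beta$ in the optimization itself, which the two growth conditions make feasible.
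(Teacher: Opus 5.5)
There is a genuine gap at the step that fixes the constant. Your constrained problem (maximize $\widetilde V_q^\mu(K,Q)$ subject to $\int h_K^p\,d\nu=1$) can at best give a Lagrange relation $\widetilde C_{p,q}^\mu(K_0,Q,\cdot)=\kappa\,\nu$ with an uncontrolled multiplier $\kappa\ge0$. Your repair by dilation works only for pointwise $\alpha$-homogeneous $g$, where $\widetilde C_{p,q}^\mu(sK_0,Q,\cdot)=s^{q+\alpha-p}\,\widetilde C_{p,q}^\mu(K_0,Q,\cdot)$. For general $g\in\Lambda_{p,q,\beta}$ (Gaussian, say), \eqref{all} together with $\alpha_{sK_0}=\alpha_{K_0}$ gives
\[
\widetilde C_{p,q}^\mu(sK_0,Q,\eta)=\frac{s^{q-p}}{n}\int_{\pmb\alpha_{K_0}^*(\eta)}h_{K_0}^{-p}(\alpha_{K_0}(u))\,\rho_{K_0}^q(u)\,\rho_Q^{n-q}(u)\,g\bigl(s\,r_{K_0}(u)\bigr)\,du .
\]
This is not a scalar multiple of $\widetilde C_{p,q}^\mu(K_0,Q,\cdot)$. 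Moreover, $sK_0$ is not a maximizer at the new constraint level, so the relation does not transfer to any dilate.

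Even $\kappa>0$ can fail. Take $g(x)=(r_1-|x|)_+$ with $q>0$ and $q\beta<p$; this $g$ lies in $\Lambda_{p,q,\beta}$, and every body is trace-regular for it by Lemma~\ref{trace-examples}. Take $\nu$ with $r_1^p\nu(S^{n-1})<1$. Then the dilate of $r_1B$ meeting the constraint is a maximizer whose boundary lies where $g=0$, so its curvature measure vanishes.

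The ratio formulation fails too. The ratio is not dilation invariant. At $K=rB$ it is at least $(a_QA_q(r)/n)^\beta/(r^p\nu(S^{n-1}))\to\infty$ as $r\to0^+$, so the small-radius condition pushes the supremum to $+\infty$ at infinitesimal scales rather than preventing that. Its Euler--Lagrange equation also carries the factor $\frac{\beta}{p}\int h_{K_0}^p\,d\nu/\widetilde V_q^\mu(K_0,Q)$ instead of $\frac1p\widetilde V_q^\mu(K_0,Q)^{\beta-1}$. Imposing $\int h^p\,d\nu=\widetilde V_q^\mu(K,Q)^\beta/\beta$ as a constraint does not help, because a constraint involving $\widetilde V_q^\mu$ changes the Lagrange equation itself.

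Tellingly, apart from $A_q(r)>0$, your constrained argument never uses the two limits in \eqref{admissibility}. Note also that $A_q(r)>0$ follows from the small-radius limit, since strict positivity of $g$ is not assumed in this theorem.

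The missing idea is to drop the constraint and maximize
\[
\Psi_\beta(f)=\frac1\beta\widetilde V_q^\mu([f],Q)^\beta-\int_{S^{n-1}}f^p\,d\nu
\]
over even $f\in C^+(S^{n-1})$, as the paper does. Then no multiplier appears. With $h_t=(h_{K_0}^p+t\psi)^{1/p}$, Corollary~\ref{pda1} gives
\[
\frac{d}{dt}\frac1\beta\widetilde V_q^\mu([h_t],Q)^\beta\Big|_{t=0}=\frac1p\widetilde V_q^\mu(K_0,Q)^{\beta-1}\int\psi\,d\widetilde C_{p,q}^\mu(K_0,Q,\cdot),
\]
while $\frac{d}{dt}\int h_t^p\,d\nu=\int\psi\,d\nu$. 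This is exactly \eqref{eq:minkowski-equation2} with the required constant, and $\nu\neq0$ forces the curvature measure to be nonzero.

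The growth conditions are precisely what make this functional work:
\begin{itemize}
\item The small-radius limit gives $\Psi_\beta(h_{rB})>0$ for small $r$, hence $\sup\Psi_\beta>0$. This excludes a degenerate limit, since there your dominated-convergence argument gives $\widetilde V_q^\mu(K_i,Q)\to0$ and so $\limsup\Psi_\beta(h_{K_i})\le0$.
\item The large-radius limit, combined with your bound $h_{K_i}\ge R_i|v_i\cdot u|$, gives $\Psi_\beta(h_{K_i})\le\frac1\beta\bigl(b_QA_q(R_i)/n\bigr)^\beta-c_{p,\nu}R_i^p\to-\infty$ if $R_i\to\infty$. So maximizing sequences stay bounded.
\end{itemize}
The rest of your plan carries over unchanged: reduction to support functions via $h_{[f]}\le f$, Blaschke selection, continuity of $\widetilde V_q^\mu$, and evenness of the measures.
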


\begin{proof}
Define the functional $\Psi_\beta$ on
$C_e^+(S^{n-1})$ by
\begin{equation}\label{Psi}
\Psi_\beta(f)
=
\frac{1}{\beta}
\widetilde V_q^\mu([f],Q)^\beta
-
\int_{S^{n-1}}f^p\,d\nu,
\end{equation}
where $[f]$ denotes the Wulff shape associated with $f$. Set
\begin{equation}\label{ab}
a_Q=\min_{S^{n-1}}\rho_Q^{n-q}>0,
\qquad
b_Q=\max_{S^{n-1}}\rho_Q^{n-q}<\infty.
\end{equation}

We first prove that $\Psi_\beta$ is continuous on
$C_e^+(S^{n-1})$. Let $f_i,f\in C_e^+(S^{n-1})$ with
$f_i\to f$ uniformly on $S^{n-1}$. Then
$[f_i]\to[f]$ in the Hausdorff metric by
\cite[Lemma 7.5.2]{Schneider2014}. Since $f$ is strictly positive,
there exist $0<r_0<R_0<\infty$ such that, for all sufficiently
large $i$,
\[
r_0B\subset [f_i],\ [f]\subset R_0B.
\]
Denote the symmetric difference by
\[
[f_i]\triangle[f]
:=
([f_i]\setminus[f])\cup([f]\setminus[f_i]).
\]
Then
\[
[f_i]\triangle[f]\subset R_0B\setminus r_0B.
\]
By \eqref{Vqmu1},
\begin{align*}
\left|
\widetilde V_q^\mu([f_i],Q)
-
\widetilde V_q^\mu([f],Q)
\right|
&\leq
\frac{1}{n}
\int_{[f_i]\triangle[f]}
\rho_Q^{n-q}(x)g(x)\,dx\\
&\leq
\frac{b_Q}{n}
\int_{[f_i]\triangle[f]}
|x|^{q-n}g(x)\,dx.
\end{align*}
Since $[f_i]\to[f]$ in the Hausdorff metric,
\[
\lambda_n([f_i]\triangle[f])\to0.
\]
Moreover, $|x|^{q-n}g(x)$ is integrable on the compact annulus
$R_0B\setminus r_0B$ by \eqref{admissibility}. Thus, by the
absolute continuity of the Lebesgue integral,
\[
\int_{[f_i]\triangle[f]}
|x|^{q-n}g(x)\,dx\to0.
\]
Therefore,
\[
\widetilde V_q^\mu([f_i],Q)
\to
\widetilde V_q^\mu([f],Q).
\]
Since $[f]$ contains a neighborhood of the origin, the
small-radius condition in \eqref{admissibility} gives
\[
\widetilde V_q^\mu([f],Q)>0.
\]
Consequently,
\[
\widetilde V_q^\mu([f_i],Q)^\beta
\to
\widetilde V_q^\mu([f],Q)^\beta.
\]
Furthermore, $f_i^p\to f^p$ uniformly on $S^{n-1}$, and hence
\[
\int_{S^{n-1}}f_i^p\,d\nu
\to
\int_{S^{n-1}}f^p\,d\nu,
\]
since $\nu$ is finite. Thus
\[
\Psi_\beta(f_i)\to\Psi_\beta(f),
\]
and $\Psi_\beta$ is continuous.

Next, recall that $h_{[f]}\leq f$ and $[h_{[f]}]=[f]$. Since
$p>0$,
\[
\Psi_\beta(h_{[f]})\geq\Psi_\beta(f).
\]
Moreover, since $f$ is even, $[f]$ is origin-symmetric and
$h_{[f]}\in C_e^+(S^{n-1})$. Thus it suffices to maximize
$\Psi_\beta(h_K)$ over $K\in\mathcal K_e^n$.

For $r>0$, \eqref{ab} gives
\[
\frac{a_Q}{n}A_q(r)
\leq
\widetilde V_q^\mu(rB,Q)
\leq
\frac{b_Q}{n}A_q(r),
\]
where $A_q$ is defined in \eqref{admissibility}. By the
small-radius condition in \eqref{admissibility},
\[
\Psi_\beta(h_{rB})
=
\frac{1}{\beta}\widetilde V_q^\mu(rB,Q)^\beta
-r^p\nu(S^{n-1})>0
\]
for all sufficiently small $r>0$. Hence
\[
M:=\sup_{K\in\mathcal K_e^n}\Psi_\beta(h_K)>0.
\]

Let $\{K_i\}\subset\mathcal K_e^n$ be a maximizing sequence such
that
\[
\Psi_\beta(h_{K_i})\to M.
\]
Set
\[
R_i=\max_{u\in S^{n-1}}h_{K_i}(u).
\]
We claim that $\{R_i\}$ is bounded. Otherwise, after passing to a
subsequence, $R_i\to\infty$. Choose $v_i\in S^{n-1}$ such that
$h_{K_i}(v_i)=R_i$. Since $K_i$ is origin-symmetric,
\[
[-R_iv_i,R_iv_i]\subset K_i,
\]
and therefore
\begin{equation}\label{ge}
h_{K_i}(u)\geq R_i|v_i\cdot u|,
\qquad u\in S^{n-1}.
\end{equation}
Since $\nu$ is not concentrated on any great subsphere, by
compactness of $S^{n-1}$ there exists $c_{p,\nu}>0$ such that
\begin{equation}\label{cpnu}
\int_{S^{n-1}}|v\cdot u|^p\,d\nu(u)
\geq c_{p,\nu},
\qquad v\in S^{n-1}.
\end{equation}
Since $K_i\subset R_iB$, the monotonicity of
$\widetilde V_q^\mu(\cdot,Q)$, together with \eqref{ge} and
\eqref{cpnu}, yields
\begin{align}
\Psi_\beta(h_{K_i})
&\leq
\frac{1}{\beta}\widetilde V_q^\mu(R_iB,Q)^\beta
-c_{p,\nu}R_i^p \notag\\
&\leq
\frac{1}{\beta}
\left(\frac{b_Q}{n}A_q(R_i)\right)^\beta
-c_{p,\nu}R_i^p.
\label{boundedness-estimate}
\end{align}
By the large-radius condition in \eqref{admissibility},
\[
\lim_{r\to\infty}
\left[
\frac{1}{\beta}
\left(\frac{b_Q}{n}A_q(r)\right)^\beta
-c_{p,\nu}r^p
\right]
=-\infty.
\]
This contradicts $\Psi_\beta(h_{K_i})\to M>0$. Hence
$\{K_i\}$ is uniformly bounded. In particular, there exists
$R>0$ such that $K_i\subset RB$ for all sufficiently large $i$.
Therefore,
\[
\Psi_\beta(h_{K_i})
\leq
\frac{1}{\beta}\widetilde V_q^\mu(RB,Q)^\beta
\leq
\frac{1}{\beta}
\left(\frac{b_Q}{n}A_q(R)\right)^\beta<\infty.
\]
It follows that $M<\infty$.

By Blaschke's selection theorem, after passing to a subsequence,
$K_i\to K_0$ in the Hausdorff metric for some compact convex set
$K_0\subset\mathbb R^n$. We claim that $K_0$ has nonempty interior. Suppose otherwise.
Then $\lambda_n(K_0)=0$. Since $K_i\to K_0$ and the sequence is
uniformly bounded, there exists $R>0$ such that $K_i\subset RB$
for all $i$. Since
$\rho_Q^{n-q}g\in L^1(RB)$, the dominated convergence theorem
gives
\[
\widetilde V_q^\mu(K_i,Q)\to0.
\]
Hence
\[
\limsup_{i\to\infty}\Psi_\beta(h_{K_i})
\leq
\lim_{i\to\infty}
\frac{1}{\beta}
\widetilde V_q^\mu(K_i,Q)^\beta
=0,
\]
contradicting
\[
\Psi_\beta(h_{K_i})\to M>0.
\]
Thus $K_0$ has nonempty interior. Since $K_0$ is origin-symmetric,
we conclude that $K_0\in\mathcal K_e^n$. Since $K_i\to K_0$ in the Hausdorff metric, we have
$h_{K_i}\to h_{K_0}$ uniformly. By the continuity of
$\Psi_\beta$,
\[
\Psi_\beta(h_{K_i})
\to
\Psi_\beta(h_{K_0})=M.
\]
Thus $h_{K_0}$ is a maximizer of $\Psi_\beta$.

It remains to verify the variational equation. Let
$\psi\in C_e(S^{n-1})$ be arbitrary. For all sufficiently small
$|t|$, define
\[
h_t(u)=\bigl(h_{K_0}^p(u)+t\psi(u)\bigr)^{1/p},
\qquad u\in S^{n-1}.
\]
Since $h_{K_0}>0$, we have
$h_t\in C_e^+(S^{n-1})$ for all sufficiently small $|t|$, and
$h_0=h_{K_0}$. By the maximality of $h_{K_0}$,
\[
\left.\frac{d}{dt}\Psi_\beta(h_t)\right|_{t=0}=0.
\]
Using \eqref{Psi} and the variational formula \eqref{Vf1}, we obtain
\begin{align*}
0
&=
\widetilde V_q^\mu(K_0,Q)^{\beta-1}
\left.\frac{d}{dt}
\widetilde V_q^\mu([h_t],Q)\right|_{t=0}
-
\left.\frac{d}{dt}
\int_{S^{n-1}}h_t^p\,d\nu\right|_{t=0}\\
&=
\frac{1}{p}
\widetilde V_q^\mu(K_0,Q)^{\beta-1}
\int_{S^{n-1}}
\psi(v)\,d\widetilde C_{p,q}^\mu(K_0,Q,v)
-
\int_{S^{n-1}}\psi(v)\,d\nu(v).
\end{align*}
Hence
\[
\int_{S^{n-1}}\psi(v)\,d\nu(v)
=
\frac{1}{p}
\widetilde V_q^\mu(K_0,Q)^{\beta-1}
\int_{S^{n-1}}
\psi(v)\,d\widetilde C_{p,q}^\mu(K_0,Q,v)
\]
for every $\psi\in C_e(S^{n-1})$.

Since both $\nu$ and
$\widetilde C_{p,q}^\mu(K_0,Q,\cdot)$ are even, the preceding
identity for all even continuous functions implies
\[
d\nu(\cdot)
=
\frac{1}{p}
\widetilde V_q^\mu(K_0,Q)^{\beta-1}
\,d\widetilde C_{p,q}^\mu(K_0,Q,\cdot).
\]
This proves \eqref{eq:minkowski-equation2}.
\end{proof}

\begin{thm}\label{cor:examples-Lambda}
Let $p > 0$, $q \in \mathbb{R}$, $\beta > 0$, and let $Q\in\mathcal S_e^n$.
Let $\mu$ be a Borel measure on $\mathbb{R}^n$ with an even nonnegative density $g\in L^1_{\mathrm{loc}}(\mathbb R^n)$ such that $\lambda_n(\{x:g(x)>0\})>0$. 
Let $\nu$ be a finite even Borel measure on $S^{n-1}$ that is not concentrated on any great subsphere. 
Suppose that one of the following conditions holds:
\begin{enumerate}[label=\textup{(\roman*)}]
    \item $g$ is pointwise $\alpha$-homogeneous for some
    $\alpha>-q$, and $\beta<\frac{p}{q+\alpha}$;
    \item $q>0$, $g$ is radially non-increasing and continuous on $\mathbb R^n$, and $\beta<\frac{p}{q}$.
\end{enumerate}
Then $\mu\in\Lambda_{p,q,\beta}$, and there exists a convex body
$K_0\in\mathcal K_e^n$ satisfying
\begin{equation*}
\mathrm d\nu
=
\frac{1}{p}\widetilde V_q^\mu(K_0,Q)^{\beta-1}
\,\mathrm d\widetilde C_{p,q}^\mu(K_0,Q,\cdot).
\end{equation*}
\end{thm}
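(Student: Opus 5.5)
The proof splits into two parts. First we verify $\mu\in\Lambda_{p,q,\beta}$ under (i) or (ii). Then we check that every $K\in\mathcal K_o^n$ satisfies \eqref{radial-trace}. Once both are in hand, Theorem~\ref{main2} applies directly and yields $K_0\in\mathcal K_e^n$ solving the equation. The trace-regularity is immediate from Lemma~\ref{trace-examples}. In case (i), part (i) of that lemma applies. In case (ii), continuity of $g$ on $\mathbb R^n$ gives continuity of $r\mapsto g(ru)$ at every $r$, so part (ii) applies. Evenness of $g$ is assumed, so what remains is local integrability of $|x|^{q-n}g$ and the two limits in \eqref{admissibility}.

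\emph{Case (i).} By polar coordinates and homogeneity,
\[
A_q(r)=\int_{S^{n-1}}g(u)\,du\int_0^r s^{q-1+\alpha}\,ds=\frac{r^{q+\alpha}}{q+\alpha}\int_{S^{n-1}}g(u)\,du .
\]
This is finite because $q+\alpha>0$ and, by \eqref{sph}, $\int_{S^{n-1}}g<\infty$. The same computation over any ball shows $|x|^{q-n}g\in L^1_{\rm loc}$. We need $c_g:=\int_{S^{n-1}}g\,du>0$. If it vanished, homogeneity would force $g=0$ a.e. on $\mathbb R^n$, contradicting $\lambda_n(\{g>0\})>0$. Hence $A_q(r)^\beta/r^p=c\,r^{\beta(q+\alpha)-p}$ with $c>0$. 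Since $\beta(q+\alpha)<p$, this tends to $0$ as $r\to\infty$ and to $\infty$ as $r\to0^+$.

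\emph{Case (ii).} Continuity gives $g\le g(o)=:G$ near the origin. Radial monotonicity together with continuity gives $g\le G$ everywhere, since $g(x)=\lim_{t\to0^+}g(tx)$ and $g(x)\le g(tx)$ for $t\le 1$. Thus $A_q(r)\le G|S^{n-1}|r^q/q$, using $q>0$. The same bound shows local integrability, and $A_q(r)^\beta/r^p\le Cr^{\beta q-p}\to0$ as $r\to\infty$ because $\beta q<p$.

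The small-radius limit is the main obstacle, because it needs a lower bound $A_q(r)\gtrsim r^q$ as $r\to0$. For this we need $g(o)>0$. Since $\lambda_n(\{g>0\})>0$, there is some $x_0\neq o$ with $g(x_0)>0$. Radial non-increase then gives $g(tx_0)\ge g(x_0)>0$ for all $t\in(0,1]$, and continuity at the origin yields $g(o)\ge g(x_0)>0$. By continuity again, $g\ge g(o)/2$ on some ball $\delta B$. For $r<\delta$ this gives $A_q(r)\ge \frac{g(o)}{2}|S^{n-1}|r^q/q$. Therefore $A_q(r)^\beta/r^p\ge c\,r^{\beta q-p}\to\infty$ as $r\to0^+$, using $\beta q<p$ once more.

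Combining the two cases, $\mu\in\Lambda_{p,q,\beta}$ and every $K\in\mathcal K_o^n$ is trace-regular. Invoking Theorem~\ref{main2} completes the argument.
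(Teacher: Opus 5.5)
Your proposal is correct and follows the paper's proof: trace-regularity from Lemma~\ref{trace-examples}, the explicit polar-coordinate formula for $A_q$ in case (i), and then Theorem~\ref{main2}. The only variation is in case (ii), where you bound $A_q(r)$ through the pointwise value $g(o)>0$ (using $g\le g(o)$ globally and $g\ge g(o)/2$ near $o$) rather than through the paper's scaling identity $A_q(r)=r^q\int_B|y|^{q-n}g(ry)\,dy$ compared with $r^qA_q(1)$; note also the slip where you write $g(x)=\lim_{t\to0^+}g(tx)$, which should read $g(o)=\lim_{t\to0^+}g(tx)$.
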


\begin{proof}
By Lemma~\ref{trace-examples}, in both cases the trace-regularity
condition \eqref{radial-trace} holds, and every
$K\in\mathcal K_o^n$ belongs to $\mathcal K_g^n$.
Thus, in view of Theorem~\ref{main2}, it remains to verify that
$\mu\in\Lambda_{p,q,\beta}$.

Recall from \eqref{admissibility} that 
\[
A_q(r)=\int_{rB}|x|^{q-n}g(x)\,\mathrm{d}x.
\]

\medskip
\noindent\textup{(i)}
Suppose that $g$ is pointwise $\alpha$-homogeneous for some
$\alpha>-q$. By polar coordinates,
\[
\begin{aligned}
A_q(r)
&=\int_0^r t^{q-1}
  \int_{S^{n-1}}g(tu)\,\mathrm{d}u\,\mathrm{d}t\\
&=\left(\int_{S^{n-1}}g(u)\,\mathrm{d}u\right)
  \int_0^r t^{q+\alpha-1}\,\mathrm{d}t\\
&=\frac{r^{q+\alpha}}{q+\alpha}
  \int_{S^{n-1}}g(u)\,\mathrm{d}u.
\end{aligned}
\]
Since $q+\alpha>0$ and $g\ge0$, the weighted density
$|x|^{q-n}g(x)$ is locally integrable. Moreover, the
nontriviality assumption and homogeneity imply
\[
\int_{S^{n-1}}g(u)\,\mathrm{d}u>0.
\]
Indeed, if this integral were zero, then $g=0$ almost everywhere
on $S^{n-1}$, and homogeneity would imply that $g=0$ almost
everywhere in $\mathbb R^n$, contradicting
$\lambda_n(\{g>0\})>0$.

Consequently,
\[
A_q(r)=C_0r^{q+\alpha},
\qquad
C_0:=\frac{1}{q+\alpha}
\int_{S^{n-1}}g(u)\,\mathrm{d}u>0.
\]
It follows that
\[
\frac{A_q(r)^\beta}{r^p}
=C_0^\beta r^{\beta(q+\alpha)-p}.
\]
Since $\beta(q+\alpha)<p$, we obtain
\[
\lim_{r\to\infty}\frac{A_q(r)^\beta}{r^p}=0,
\qquad
\lim_{r\to0^+}\frac{A_q(r)^\beta}{r^p}=\infty.
\]
Hence $\mu\in\Lambda_{p,q,\beta}$ in this case.

\medskip
\noindent\textup{(ii)}
Suppose that $q>0$ and that $g$ is radially non-increasing and
continuous on $\mathbb R^n$. The local integrability of
$|x|^{q-n}g(x)$ follows from the continuity of $g$ and the
condition $q>0$.

We first show that $A_q(1)>0$. If $A_q(1)=0$, then the
nonnegativity of $g$ implies that $g=0$ almost everywhere on
$B$. By continuity, $g=0$ throughout $B$. Radial monotonicity
then gives $g=0$ on all of $\mathbb R^n$, contradicting
$\lambda_n(\{g>0\})>0$. Thus $A_q(1)>0$.

For $r>0$, the change of variables $x=ry$ gives
\[
A_q(r)=r^q\int_B |y|^{q-n}g(ry)\,\mathrm{d}y.
\]
By radial monotonicity,
\[
A_q(r)\ge r^q A_q(1),\qquad 0<r\le1,
\]
and
\[
A_q(r)\le r^q A_q(1),\qquad r\ge1.
\]
Therefore, since $\beta q<p$,
\[
\frac{A_q(r)^\beta}{r^p}
\le A_q(1)^\beta r^{q\beta-p}\longrightarrow0
\qquad\text{as }r\to\infty,
\]
whereas
\[
\frac{A_q(r)^\beta}{r^p}
\ge A_q(1)^\beta r^{q\beta-p}\longrightarrow\infty
\qquad\text{as }r\to0^+.
\]
Hence $\mu\in\Lambda_{p,q,\beta}$ in this case as well.

In either case, all the hypotheses of Theorem~\ref{main2} are
satisfied. Applying Theorem \eqref{main2} yields
$K_0\in\mathcal K_e^n$ such that
\[
\mathrm d\nu
=
\frac{1}{p}\widetilde V_q^\mu(K_0,Q)^{\beta-1}
\,\mathrm d\widetilde C_{p,q}^\mu(K_0,Q,\cdot).
\]
This completes the proof.
\end{proof}

\begin{thm}\label{52}
Let $p > 0$, $q \in \mathbb{R}$, and $\beta \ge 1$, and let $Q \in \mathcal{S}_o^n$. 
Let $\mu$ be a Borel measure on $\mathbb{R}^n$ with a positive density $g \in L^1_{\mathrm{loc}}(\mathbb{R}^n)$ satisfying $
|x|^{q-n}g(x) \in L^1_{\mathrm{loc}}(\mathbb{R}^n)$.
For polytopes $P, P' \in \mathcal{K}_o^n$, suppose that
\begin{equation}\label{uni}
\widetilde V_q^\mu(P,Q)^{\beta-1} \widetilde C_{p,q}^\mu(P,Q,\cdot) 
= \widetilde V_q^\mu(P',Q)^{\beta-1} \widetilde C_{p,q}^\mu(P',Q,\cdot).
\end{equation}
Then the following statements hold:
\begin{enumerate}[label=\textup{(\roman*)}]
    \item If $g$ is pointwise $\alpha$-homogeneous for some $\alpha > -q$, then $P = P'$ provided that $(q+\alpha)\beta < p$, while $P'$ is a dilate of $P$ if $(q+\alpha)\beta = p$.

    \item If $g$ is radially non-increasing and continuous on $\mathbb R^n$, then $P = P'$ if $q\beta < p$. If, in addition, $g$ is strictly radially decreasing, then $P = P'$ even when $q\beta = p$.
\end{enumerate}
\end{thm}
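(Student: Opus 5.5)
The plan is a maximum-principle comparison at a facet, in the spirit of the uniqueness arguments of Lutwak, Yang and Zhang for polytopes.

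\emph{Step 1: common facet normals.} First I will show that $P$ and $P'$ have the same facet normals. Write $F(P,u)$ for the face of $P$ with outer normal $u$. By \eqref{CWqQ}, $\widetilde C_{p,q}^\mu(P,Q,\cdot)$ is concentrated on the facet normals $u_1,\dots,u_N$ of $P$, and
\[
\widetilde C_{p,q}^\mu(P,Q,\{u_i\})=\frac1n h_P(u_i)^{1-p}\int_{F(P,u_i)}\rho_Q^{n-q}g\,d\mathcal H^{n-1}.
\]
This is strictly positive because $g>0$. Also $\widetilde V_q^\mu(P,Q)>0$ for the same reason. Hence \eqref{uni} forces the supports to coincide, so $P$ and $P'$ have the same facet normals $u_1,\dots,u_N$.

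\emph{Step 2: the comparison inequality.} Set
\[
\lambda=\max_i \frac{h_{P'}(u_i)}{h_P(u_i)}=\frac{h_{P'}(u_1)}{h_P(u_1)} .
\]
Then $P'\subset\lambda P$, and the facet $F(P',u_1)$ lies in the facet $\lambda F(P,u_1)$. Let $\omega'=\pmb\alpha^*_{P'}(u_1)$. For $u\in\omega'$ we have $\rho_{P'}(u)=\lambda\rho_P(u)$, and $\omega'\subset\pmb\alpha^*_P(u_1)$ up to null sets. By the radial representation \eqref{all},
\[
\widetilde C_{p,q}^\mu(P',Q,\{u_1\})=\frac1n\int_{\omega'}\lambda^{q-p}h_P(u_1)^{-p}\rho_P^q\rho_Q^{n-q}g(\lambda\rho_P(u)u)\,du .
\]

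Case (i). Here $g(\lambda\rho_P u)=\lambda^\alpha g(\rho_P u)$, so
\[
\widetilde C_{p,q}^\mu(P',Q,\{u_1\})\le\lambda^{q+\alpha-p}\,\widetilde C_{p,q}^\mu(P,Q,\{u_1\}).
\]
Monotonicity and homogeneity also give $\widetilde V_q^\mu(P',Q)\le\widetilde V_q^\mu(\lambda P,Q)=\lambda^{q+\alpha}\widetilde V_q^\mu(P,Q)$.

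Case (ii), assuming $\lambda\ge1$. Here $g(\lambda\rho_Pu)\le g(\rho_Pu)$, which gives the same two inequalities with $\alpha$ replaced by $0$. For the volume, substitute $x=\lambda y$ and use $g(\lambda y)\le g(y)$.

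Since $\beta\ge1$, the factor $\widetilde V^{\beta-1}$ is monotone. Plugging both inequalities into \eqref{uni} at $u_1$ and cancelling the positive quantity $\widetilde V_q^\mu(P,Q)^{\beta-1}\widetilde C_{p,q}^\mu(P,Q,\{u_1\})$ yields
\[
1\le\lambda^{\gamma\beta-p},\qquad \gamma=q+\alpha \ \text{in (i)},\quad \gamma=q\ \text{in (ii)}.
\]

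\emph{Step 3: conclusion when $\gamma\beta<p$.} If $\lambda>1$, the inequality $1\le\lambda^{\gamma\beta-p}$ is impossible, so $\lambda\le1$ and hence $P'\subset P$. The hypotheses are symmetric in $P$ and $P'$, so running the same argument with their roles swapped gives $P\subset P'$. Therefore $P=P'$.

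In case (ii) the comparison is only valid when the relevant ratio is $\ge1$, so the logic must be arranged carefully. Suppose $\lambda<1$. Then $\max_i h_P(u_i)/h_{P'}(u_i)\ge 1/\lambda>1$, and applying the argument to this ratio gives a contradiction. So $\lambda\ge1$, the argument applies, and again $\lambda\le 1$. Hence $\lambda=1$; the symmetric statement follows in the same way.

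\emph{Step 4: the borderline cases $\gamma\beta=p$.} This is where I expect the main work.

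In case (i), equality must hold throughout. In particular $\widetilde V_q^\mu(P',Q)=\widetilde V_q^\mu(\lambda P,Q)$ with $P'\subset\lambda P$. Because $\rho_Q^{n-q}g>0$, the set $\lambda P\setminus P'$ is Lebesgue-null, so $P'=\lambda P$ is a dilate.

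In case (ii) with $g$ strictly radially decreasing, suppose $\lambda>1$. Then $\omega'$ has positive spherical measure, since the facet $F(P',u_1)$ has positive $\mathcal H^{n-1}$-measure. On $\omega'$ we have $g(\lambda\rho_Pu)<g(\rho_Pu)$. This makes the facet inequality strict and contradicts $1\le\lambda^{0}$. Hence $\lambda=1$, and symmetry again gives $P=P'$.

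The delicate points are the null-set bookkeeping in the inclusion $\omega'\subset\pmb\alpha^*_P(u_1)$, and the need for strictness on a set of positive measure.
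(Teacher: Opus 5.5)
You are running the paper's facet-comparison argument in mirror image. You enclose $P'$ in the smallest dilate $\lambda P$ touching along $u_1$, with $\lambda\ge1$ in case (ii), whereas the paper inscribes the largest dilate $\lambda P\subseteq P'$ with $\lambda<1$. Steps 1--3 are correct, and so is your treatment of the strictly decreasing subcase of (ii). There, using the strict facet inequality for every $\beta\ge1$ is fine.

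\textbf{The gap: Step 4, case (i), when $\beta=1$.} In that case the factor $\widetilde V_q^\mu(\cdot,Q)^{\beta-1}$ is identically $1$, so \eqref{uni} carries no volume information. ``Equality throughout'' therefore does \emph{not} give $\widetilde V_q^\mu(P',Q)=\widetilde V_q^\mu(\lambda P,Q)$. Your volume argument works only for $\beta>1$, where $t\mapsto t^{\beta-1}$ is strictly increasing. For $\beta=1$ you only get equality in the facet inequality at $u_1$, i.e.\ $\pmb\alpha^*_{P'}(u_1)=\pmb\alpha^*_P(u_1)$ up to a null set. That says only that the single facet $F(P',u_1)$ coincides with $\lambda F(P,u_1)$, not that $P'=\lambda P$.

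\textbf{How to close it.} You need a propagation step, which is what the paper supplies for $\beta=1$.
\begin{enumerate}
\item Let $I=\{i:h_{P'}(u_i)=\lambda h_P(u_i)\}$. Your comparison applies at every $i\in I$, so each facet $F(P',u_i)$ with $i\in I$ coincides with $\lambda F(P,u_i)$.
\item Suppose $F(\lambda P,u_j)$ shares a ridge with $F(\lambda P,u_i)$ for some $i\in I$. Any point $x$ of that ridge lies in $F(P',u_i)\subset P'$ and satisfies $x\cdot u_j=\lambda h_P(u_j)$. Hence $h_{P'}(u_j)\ge\lambda h_P(u_j)$, and maximality of $\lambda$ forces $j\in I$.
\item The facet-adjacency graph of a polytope is connected, so $I=\{1,\dots,N\}$ and $P'=\lambda P$.
\end{enumerate}

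\textbf{A minor point.} Cancelling $\widetilde C^\mu_{p,q}(P,Q,\{u_1\})$ requires it to be finite, which needs $g_P\in L^1(S^{n-1})$. This holds in case (i) by homogeneity and in case (ii) by continuity. It does not follow from $g\in L^1_{\mathrm{loc}}(\mathbb R^n)$ alone, so you should say where it comes from.
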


\begin{proof}
In either case, Lemma~\ref{trace-examples} gives $g_K\in L^1(S^{n-1})$ for every $K\in\mathcal K_o^n$.
Let $v_1,\ldots,v_m$ denote the outer unit normals of the facets of $P$,
and let $\Delta_i$ be the cone consisting of all rays emanating from the
origin and passing through the facet of $P$ with outer unit normal $v_i$.
By the relation between the radial Gauss image \eqref{ala} and the reverse
radial Gauss image \eqref{sta}, we have
\begin{equation}\label{dis}
\boldsymbol{\alpha}_P^*(v_i)=S^{n-1}\cap\Delta_i,
\qquad
\alpha_P(u)=v_i
\quad\text{for a.e. }u\in S^{n-1}\cap\Delta_i.
\end{equation}
Hence, if $\eta\subset S^{n-1}$ is a Borel set satisfying
$\eta\cap\{v_1,\ldots,v_m\}=\varnothing$, then
$\boldsymbol{\alpha}_P^*(\eta)$ has zero spherical Lebesgue measure.
Consequently, $\widetilde C^\mu_{p,q}(P,Q,\cdot)$ is discrete and
supported on $\{v_1,\ldots,v_m\}$. Combining \eqref{dis} with the
integral representation \eqref{all}, we obtain
\begin{equation*}
\widetilde C^\mu_{p,q}(P,Q,\cdot)=\sum_{i=1}^m c_i(P)\,\delta_{v_i},
\end{equation*}
where $\delta_{v_i}$ denotes the delta measure concentrated at $v_i$, and
\begin{equation}\label{cP}
c_i(P)=\frac{1}{n}h_P(v_i)^{-p}
\int_{S^{n-1}\cap\Delta_i}
\rho_P^q(u)\rho_Q^{\,n-q}(u)\,g(r_P(u))\,du.
\end{equation}
Since $P\in\mathcal K_o^n$ and $Q\in\mathcal S_o^n$, we have $\rho_P>0$
and $\rho_Q>0$ on $S^{n-1}$. Together with the assumption $g>0$, this
yields $c_i(P)>0$ for all $i$. The same holds for $P'$. Since all
coefficients are positive, \eqref{uni} implies that the two measures have
the same support; hence $P$ and $P'$ have exactly the same facet normals
$v_1,\ldots,v_m$. As $o\in\operatorname{int}P\cap\operatorname{int}P'$,
\begin{equation}\label{halfspaces}
P=\bigcap_{i=1}^m\{x\in\mathbb R^n:x\cdot v_i\le h_P(v_i)\},
\qquad
P'=\bigcap_{i=1}^m\{x\in\mathbb R^n:x\cdot v_i\le h_{P'}(v_i)\}.
\end{equation}

Suppose $P\neq P'$. Since \eqref{uni} is symmetric in $P$ and $P'$,
interchanging them if necessary, we may assume that $P\not\subseteq P'$.
By \eqref{halfspaces}, the inclusion $tP\subseteq P'$ is equivalent to
\[
t\,h_P(v_i)\le h_{P'}(v_i),
\qquad i=1,\ldots,m.
\]
Therefore,
\begin{equation}\label{lambda}
\lambda:=\max\{t>0:tP\subseteq P'\}
=\min_{1\le i\le m}\frac{h_{P'}(v_i)}{h_P(v_i)}.
\end{equation}
Then $\lambda>0$. Moreover, $\lambda\ge1$ would imply
$P\subseteq\lambda P\subseteq P'$, contradicting $P\not\subseteq P'$.
Thus $0<\lambda<1$. Choose $i_1$ attaining the minimum in \eqref{lambda},
so that
\begin{equation}\label{contact}
h_{\lambda P}(v_{i_1})=h_{P'}(v_{i_1}),
\qquad\text{and}\qquad
\lambda P\subseteq P'.
\end{equation}
For $u\in S^{n-1}\cap\Delta_{i_1}$, the radial point $\rho_{\lambda P}(u)u$
lies on the facet of $\lambda P$ with normal $v_{i_1}$, so
$\rho_{\lambda P}(u)\,u\cdot v_{i_1}=h_{\lambda P}(v_{i_1})$ and $u\cdot v_{i_1}>0$. Together with \eqref{contact},
\begin{equation}\label{lower}
\frac{h_{P'}(v_{i_1})}{u\cdot v_{i_1}}
=\frac{h_{\lambda P}(v_{i_1})}{u\cdot v_{i_1}}
=\rho_{\lambda P}(u)
\le\rho_{P'}(u).
\end{equation}
On the other hand, since $\rho_{P'}(u)u\in P'$, then
\begin{equation}\label{upper}
\rho_{P'}(u)\le\frac{h_{P'}(v_{i_1})}{u\cdot v_{i_1}}.
\end{equation}
Combining \eqref{lower} and \eqref{upper} gives
\begin{equation}\label{rad}
\rho_{P'}(u)=\frac{h_{P'}(v_{i_1})}{u\cdot v_{i_1}}=\rho_{\lambda P}(u),
\qquad
u\in S^{n-1}\cap\Delta_{i_1}.
\end{equation}
Thus, $\rho_{P'}(u)u$ lies on the facet of $P'$ with normal
$v_{i_1}$. Writing $\Delta_{i_1}'$ for the cone over that facet of $P'$,
we obtain
\begin{equation}\label{coneinclusion}
\Delta_{i_1}\subseteq\Delta_{i_1}'.
\end{equation}
By \eqref{rad}, the integrands in \eqref{cP} for $\lambda P$ and $P'$ coincide on $S^{n-1} \cap \Delta_{i_1}$, and by \eqref{contact} the coefficients $h_{\lambda P}(v_{i_1})^{-p} = h_{P'}(v_{i_1})^{-p}$ coincide as well.
Since the integrand is strictly positive, the cone inclusion
\eqref{coneinclusion} yields
\begin{equation}\label{compare}
c_{i_1}(\lambda P)\le c_{i_1}(P'),
\end{equation}
with equality only if $S^{n-1}\cap\Delta_{i_1}$ and $S^{n-1}\cap\Delta_{i_1}'$ agree up to a set of spherical measure zero. Finally, comparing the coefficients of
$\delta_{v_{i_1}}$ in \eqref{uni} gives
\begin{equation}\label{coeff}
\widetilde V_q^\mu(P,Q)^{\beta-1}c_{i_1}(P)
=
\widetilde V_q^\mu(P',Q)^{\beta-1}c_{i_1}(P').
\end{equation}

\medskip
\noindent\textbf{Case (i).} $g$ is $\alpha$-homogeneous.

Since $g(\lambda x)=\lambda^\alpha g(x)$, $\rho_{\lambda P}=\lambda\rho_P$
and $h_{\lambda P}=\lambda h_P$, formula \eqref{cP} gives
\begin{equation}\label{c-scale}
c_i(\lambda P)=\lambda^{q-p+\alpha}c_i(P),
\qquad i=1,\ldots,m,
\end{equation}
and it follows from \eqref{Vqmu1} that
\begin{equation}\label{V-scale}
\widetilde V_q^\mu(\lambda P,Q)
=\lambda^{q+\alpha}\widetilde V_q^\mu(P,Q).
\end{equation}
Since $\lambda P\subseteq P'$ implies
\begin{equation*}\widetilde V_q^\mu(\lambda P,Q)\le\widetilde V_q^\mu(P',Q)\end{equation*} and
$\beta\ge1$, combining \eqref{c-scale}, \eqref{V-scale}, \eqref{compare}
and \eqref{coeff}, we obtain
\begin{align}
\lambda^{(q+\alpha)\beta-p}\,
\widetilde V_q^\mu(P,Q)^{\beta-1}c_{i_1}(P)
&=\widetilde V_q^\mu(\lambda P,Q)^{\beta-1}c_{i_1}(\lambda P)\notag\\
&\le\widetilde V_q^\mu(P',Q)^{\beta-1}c_{i_1}(\lambda P)\notag\\
&\le\widetilde V_q^\mu(P',Q)^{\beta-1}c_{i_1}(P')\notag\\
&=\widetilde V_q^\mu(P,Q)^{\beta-1}c_{i_1}(P).
\label{equ}
\end{align}Since
$\widetilde V_q^\mu(P,Q)^{\beta-1}c_{i_1}(P)>0$, we conclude that
\begin{equation}\label{key}
\lambda^{(q+\alpha)\beta-p}\le1.
\end{equation}
If $(q+\alpha)\beta<p$, then $(q+\alpha)\beta-p<0$, and $0<\lambda<1$
gives $\lambda^{(q+\alpha)\beta-p}>1$, contradicting \eqref{key}. Hence
$P=P'$.

If $(q+\alpha)\beta=p$, then
$$
\lambda^{(q+\alpha)\beta-p}=1.
$$
Hence the two ends of \eqref{equ} are equal, and therefore every
inequality in \eqref{equ} is an equality. In particular,
\begin{equation*}
c_{i_1}(\lambda P)=c_{i_1}(P').
\end{equation*}
Moreover, if $\beta>1$, then
\begin{equation}\label{V-eq}
\widetilde V_q^\mu(\lambda P,Q)=
\widetilde V_q^\mu(P',Q).
\end{equation}
Since $\lambda P\subseteq P'$ and $g>0$, equality in \eqref{V-eq}
implies
$$
\lambda P=P'.
$$
Indeed, if the inclusion were strict, then
$P'\setminus\lambda P$ would have positive Lebesgue measure, and hence
$$
\widetilde V_q^\mu(P',Q)
>
\widetilde V_q^\mu(\lambda P,Q),
$$
a contradiction. Thus, when $\beta>1$, the desired conclusion already
follows. We may therefore assume that $\beta=1$.

It remains to consider the case $\beta=1$. Equality in
\eqref{compare} implies that
$S^{n-1}\cap\Delta_{i_1}$ and
$S^{n-1}\cap\Delta_{i_1}'$ agree up to a set of spherical measure zero.
Since
$$
S^{n-1}\cap\Delta_{i_1}
\subseteq
S^{n-1}\cap\Delta_{i_1}',
$$
a proper inclusion would yield a set of positive spherical measure, contradicting the equality of the corresponding integrals. Hence
\begin{equation*}
\Delta_{i_1}=\Delta_{i_1}'.
\end{equation*}
Together with \eqref{contact}, this shows that the facets of $\lambda P$
and $P'$ with outer normal $v_{i_1}$ coincide.

We now show that this coincidence propagates to all facets. Let $E$ be
a ridge of the common $v_{i_1}$-facet. Suppose that $E$ is contained in
the facets with outer normals $v_{i_1}$ and $v_{i_2}$ of $\lambda P$.
Since the $v_{i_1}$-facets of $\lambda P$ and $P'$ coincide, $E$ is also
a ridge of $P'$. Let $v_j$ be the outer normal of the other facet of
$P'$ containing $E$.
Since $\lambda P\subseteq P'$,
$$
h_{\lambda P}(v_j)\le h_{P'}(v_j).
$$
On the other hand, $E\subset\lambda P$ and
$E\subset\{x\in\mathbb R^n:x\cdot v_j=h_{P'}(v_j)\}$, so
$$
h_{\lambda P}(v_j)
\ge \max_{x\in E}x\cdot v_j
=h_{P'}(v_j).
$$
Hence
\begin{equation*}
h_{\lambda P}(v_j)=h_{P'}(v_j).
\end{equation*}
Thus the same contact relation as in \eqref{contact} holds with
$v_{i_1}$ replaced by $v_j$.

Applying \eqref{lower}--\eqref{coneinclusion} with $j$ in place of
$i_1$, we obtain
$$
\Delta_j\subseteq\Delta_j'
\qquad\text{and}\qquad
c_j(\lambda P)\le c_j(P').
$$
Since $(q+\alpha)\beta=p$, the two ends of the corresponding version of
\eqref{equ} are equal. Hence all the inequalities are equalities, and
therefore
$$
\Delta_j=\Delta_j'.
$$
Thus the $v_j$-facets of $\lambda P$ and $P'$ also coincide.

Repeating this argument along adjacent facets and using the connectedness
of the facet adjacency graph, we conclude that the corresponding facets
of $\lambda P$ and $P'$ coincide. Hence
$$
h_{\lambda P}(v_i)=h_{P'}(v_i),
\qquad i=1,\ldots,m.
$$
By \eqref{halfspaces},
$$
P'=\lambda P.
$$
Thus $P'$ is a dilate of $P$.

\medskip
\noindent\textbf{Case (ii).} $g$ is radially non-increasing.

We first claim that, for every $0<\lambda<1$,
\begin{equation}\label{eq:V-radial-dec}
\widetilde V_q^\mu(\lambda P,Q)
\ge \lambda^q\widetilde V_q^\mu(P,Q),
\end{equation}
with strict inequality if $g$ is strictly radially decreasing. Indeed, by the change of variables $x=\lambda y$ and the $(-1)$-homogeneity of the radial function of $Q$,
$$
\begin{aligned}
\widetilde V_q^\mu(\lambda P,Q)
&=\frac{1}{n}\int_{\lambda P}\rho_Q^{\,n-q}(x)g(x)\,dx\\
&=\frac{\lambda^q}{n}
  \int_P\rho_Q^{\,n-q}(y)g(\lambda y)\,dy\\
&\ge \frac{\lambda^q}{n}
  \int_P\rho_Q^{\,n-q}(y)g(y)\,dy\\
&=\lambda^q\widetilde V_q^\mu(P,Q),
\end{aligned}
$$
where the inequality follows from
$$
g(\lambda y)\ge g(y),\qquad y\in P,
$$
since $g$ is radially non-increasing and $0<\lambda<1$.
If $g$ is strictly radially decreasing, then
$$
g(\lambda y)>g(y)
\qquad\text{for every }y\in P\setminus\{o\}.
$$
Since $P\setminus\{o\}$ has positive Lebesgue measure, and
$$
\rho_Q^{\,n-q}(y)>0
\qquad\text{for every }y\in P\setminus\{o\}.
$$
Therefore,
$$
\int_P\rho_Q^{\,n-q}(y)
\bigl(g(\lambda y)-g(y)\bigr)\,dy>0,
$$
and hence
\begin{equation}\label{str}
\widetilde V_q^\mu(\lambda P,Q)>
\lambda^q\widetilde V_q^\mu(P,Q).
\end{equation}

Similarly, since
$r_{\lambda P}(u)=\lambda r_P(u)$ and
$h_{\lambda P}(v_{i_1})=\lambda h_P(v_{i_1})$, formula
\eqref{cP} gives
\begin{equation}\label{c-dec}
c_{i_1}(\lambda P)
\ge \lambda^{q-p}c_{i_1}(P),
\end{equation}
where the inequality follows from
$$
g(\lambda r_P(u))\ge g(r_P(u)),
\qquad
u\in S^{n-1}\cap\Delta_{i_1}.
$$
If $g$ is strictly radially decreasing, then
$$
g(\lambda r_P(u))>g(r_P(u))
\qquad\text{for every }u\in S^{n-1}\cap\Delta_{i_1}.
$$
Since $S^{n-1}\cap\Delta_{i_1}$ has positive spherical measure and
$$
\rho_P^q(u)\rho_Q^{\,n-q}(u)>0
\qquad\text{on }S^{n-1}\cap\Delta_{i_1},
$$
the inequality in \eqref{c-dec} is strict.

Combining \eqref{eq:V-radial-dec} (raised to the power $\beta-1\ge0$),
\eqref{c-dec}, \eqref{compare} and \eqref{coeff}, we obtain
\begin{align}
\lambda^{q\beta-p}\,
\widetilde V_q^\mu(P,Q)^{\beta-1}c_{i_1}(P)
&\le\widetilde V_q^\mu(\lambda P,Q)^{\beta-1}c_{i_1}(\lambda P)\notag\\
&\le\widetilde V_q^\mu(P',Q)^{\beta-1}c_{i_1}(\lambda P)\notag\\
&\le\widetilde V_q^\mu(P',Q)^{\beta-1}c_{i_1}(P')\notag\\
&=\widetilde V_q^\mu(P,Q)^{\beta-1}c_{i_1}(P).
\label{chain2}
\end{align} Since
$\widetilde V_q^\mu(P,Q)^{\beta-1}c_{i_1}(P)>0$, we conclude that
\begin{equation}\label{key2}
\lambda^{q\beta-p}\le1.
\end{equation}
If $q\beta<p$, then $\lambda^{q\beta-p}>1$ for $0<\lambda<1$, contradicting \eqref{key2}. Thus $P=P'$.

If $q\beta=p$ and $g$ is strictly radially decreasing, then the first
inequality in \eqref{chain2} is strict. Indeed, when $\beta>1$, this
follows from inequality \eqref{str}; when
$\beta=1$, it follows from the strict inequality in \eqref{c-dec}.
Consequently, \eqref{chain2} yields
$$
\lambda^{q\beta-p}<1.
$$
But $q\beta=p$, so that
$$
\lambda^{q\beta-p}=\lambda^0=1,
$$
a contradiction. Hence $P=P'$.

\end{proof}

For completeness, we note that under the positivity assumptions of 
Theorem~\ref{52}, any solution corresponding to a finitely supported measure 
is a polytope. Indeed, the strict positivity of $g$, together with 
\eqref{CWqQ} and \eqref{Smu}, implies that $\widetilde{C}_{p,q}^\mu(K,Q,\cdot)$ 
and $S_1^\mu(K,\cdot)$ share the same null sets. Consequently, the latter is 
finitely supported, which forces $K$ to be a polytope. Theorem~\ref{52} then 
ensures uniqueness whenever a solution exists. Combining this with 
Theorems~\ref{main2} and~\ref{cor:examples-Lambda} completes the proof of 
Theorem~\ref{thm:exist-uniqueness}.

\section{Affine properties and weighted dual mixed volumes}\label{7}

Suppose $p\in\mathbb R$ and $\nu$ is a Borel measure on
$S^{n-1}$. For $\phi\in\mathrm{GL}(n)$, the $L_p$ image of $\nu$
under $\phi$, denoted by $\phi_{p^\dashv}\nu$, is the Borel measure
defined by
\begin{equation}\label{B}
\int_{S^{n-1}} f(v)\,d\phi_{p^\dashv}\nu(v)
=
\int_{S^{n-1}}
|\phi^{-1}v|^p
f\bigl(\overline{\phi^{-1}v}\bigr)\,d\nu(v),
\end{equation}
for every bounded Borel function $f:S^{n-1}\to\mathbb R$, where $\overline{\phi^{-1}v}=\phi^{-1}v/|\phi^{-1}v|$.
The affine property of the $(p,q)$-weighted dual curvature measures
is established in the following theorem.

\begin{thm}\label{pp1}
Let $p,q\in\mathbb R$, $K\in\mathcal K_o^n$, and
$Q\in\mathcal S_o^n$. Assume that
$g\in L^1_{\mathrm{loc}}(\mathbb R^n)$ and that $\phi K$ satisfies
the trace-regularity condition \eqref{radial-trace} for every
$\phi\in\mathrm{SL}(n)$. Then, for every $\phi\in\mathrm{SL}(n)$,
\begin{equation}\label{SB}
\widetilde{C}^{\mu}_{p,q}(\phi K,\phi Q,\cdot)
=
\phi_{p^\dashv}^t
\widetilde{C}^{\mu^\phi}_{p,q}(K,Q,\cdot),
\end{equation}
where $\mu^\phi$ is the measure with density $g\circ\phi$.
\end{thm}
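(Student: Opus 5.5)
The plan is to derive \eqref{SB} from the variational formula of Theorem~\ref{pda0}, in the spirit of Lutwak, Yang, and Zhang \cite{LYZ2018}. Both sides of \eqref{SB} are finite Borel measures on $S^{n-1}$. It therefore suffices, by the Riesz representation theorem, to show that for every $f\in C(S^{n-1})$
\[
\int_{S^{n-1}} f\,d\widetilde C^{\mu}_{p,q}(\phi K,\phi Q,\cdot)
=\int_{S^{n-1}}|\phi^{-t}v|^p f\bigl(\overline{\phi^{-t}v}\bigr)\,d\widetilde C^{\mu^\phi}_{p,q}(K,Q,v).
\]
Fix $f$ and extend it to $\mathbb R^n\setminus\{o\}$ as a $p$-homogeneous function $F(x)=|x|^pf(\bar x)$. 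Define $G(y)=F(\phi^{-t}y)$, whose restriction to $S^{n-1}$ is exactly the weight-times-$f$ appearing on the right.

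\textbf{Step 1: transport the Wulff shapes.} Let $h_t=(h_{\phi K}^p+tf)^{1/p}$ for $p\ne0$, and $h_t=h_{\phi K}e^{tf}$ for $p=0$. Using $h_{\phi K}(v)=h_K(\phi^tv)$, the degree-one homogeneous extension of $h_t$ equals $k_t(\phi^t\cdot)$, where
\[
k_t=(h_K^p+tG)^{1/p}\quad (p\ne0),\qquad k_t=h_Ke^{tG}\quad (p=0).
\]
The Wulff-shape identity $[k(\phi^t\cdot)]=\phi[k]$ then gives $[h_t]=\phi[k_t]$.

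\textbf{Step 2: transport the functional.} For $M\in\mathcal K_o^n$, I claim
\[
\widetilde D^\mu_q(\phi M,\phi Q,\phi C)=\widetilde D^{\mu^\phi}_q(M,Q,C).
\]
This follows from the first expression in \eqref{Vqmu} via the substitution $x=\phi y$, using $\det\phi=1$, $\rho_{\phi Q}(\phi y)=\rho_Q(y)$ from \eqref{phin}, and $\phi M\setminus\phi C=\phi(M\setminus C)$. Consequently $\widetilde D^\mu_q([h_t],\phi Q,\phi C)=\widetilde D^{\mu^\phi}_q([k_t],Q,C)$ for all small $|t|$.

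\textbf{Step 3: differentiate both sides at $t=0$.} On the left, apply Theorem~\ref{pda0} to $\phi K$, the density $g$, and the function $f$; the trace-regularity of $\phi K$ is assumed. On the right, apply Theorem~\ref{pda0} to $K$, the density $g\circ\phi$ (which is locally integrable), and the function $G|_{S^{n-1}}$. Equating the derivatives and cancelling the common factor $1/p$ (or nothing, when $p=0$) yields the displayed identity. Evenness plays no role here, and the case $p=0$ needs no separate treatment since $|\cdot|^0=1$.

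\textbf{Main obstacle.} The hard step is the hypothesis of Theorem~\ref{pda0} on the right side: $K$ must be trace-regular with respect to $g\circ\phi$, whereas we are only given that $\phi K$ is trace-regular with respect to $g$. I would check directly that the two conditions are equivalent. Since $\rho_{\phi K}(\overline{\phi u})=|\phi u|\rho_K(u)$, we have
\[
g\circ\phi(ru)=g\bigl(r|\phi u|\,\overline{\phi u}\bigr).
\]
So the inner integral in \eqref{radial-trace} for $(K,g\circ\phi,\varepsilon)$ equals $|\phi u|^{-1}$ times the inner integral for $(\phi K,g,\varepsilon|\phi u|)$ in the direction $w=\overline{\phi u}$. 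The map $u\mapsto\overline{\phi u}$ is a diffeomorphism of $S^{n-1}$ with Jacobian bounded above and below. Moreover, $|\phi u|\in[\|\phi^{-1}\|^{-1},\|\phi\|]$, so the radial window $\varepsilon|\phi u|$ lies in $[c\varepsilon,C\varepsilon]$ and can be enlarged to the uniform window $C\varepsilon$ (the integrand is nonnegative). The whole expression is thus bounded by a constant times the trace quantity of $\phi K$ at scale $C\varepsilon$, divided by $C\varepsilon$, and this tends to $0$. Hence $K$ is trace-regular with respect to $g\circ\phi$, and Theorem~\ref{pda0} applies to both sides.
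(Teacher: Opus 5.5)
Your argument is correct. Its first two ingredients coincide with the paper's: the transfer of trace-regularity from $(\phi K,g)$ to $(K,g\circ\phi)$ via the substitutions $s=|\phi u|r$, $v=\overline{\phi u}$, and the transport identity \eqref{aff} for $\widetilde D_q^\mu$.

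Where you differ is in how the identity is obtained for all test functions. The paper differentiates only along Minkowski--Firey combinations. Using \eqref{aph} and Theorem~\ref{main}, it obtains $\int h_L^p\,d\widetilde C^\mu_{p,q}(\phi K,\phi Q,\cdot)=\int h_{\phi^{-1}L}^p\,d\widetilde C^{\mu^\phi}_{p,q}(K,Q,\cdot)$. It then appeals to the density of $\{c\,h_L^p-c\,h_B^p\}$ in $C(S^{n-1})$, a lemma of Lutwak, Yang, and Zhang. For $p=0$ the paper needs a separate computation, which uses:
\begin{enumerate}
\item the combination $(1-t)\cdot K+_0t\cdot L$, where \eqref{aph} requires $s+t=1$;
\item Corollary~\ref{mm};
\item the auxiliary choice $L=B$, to remove the term $\int\log h_{\phi K}\,d\widetilde C^\mu_{0,q}(\phi K,\phi Q,\cdot)$;
\item the density of $\{c\log h_L-c\log h_B\}$.
\end{enumerate}

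Your $p$-homogeneous pull-back $G=F(\phi^{-t}\cdot)$ generalizes \eqref{aph} to arbitrary Wulff perturbations: $[(h_{\phi K}^p+tf)^{1/p}]=\phi\,[(h_K^p+tG)^{1/p}]$, with the exponential analogue for $p=0$. Theorem~\ref{pda0} therefore gives the identity for every $f\in C(S^{n-1})$ directly, and treats $p\neq0$ and $p=0$ uniformly. This yields a self-contained proof with no density lemma and no case split. It relies only on the elementary identity $[k(\phi^t\cdot)]=\phi[k]$ for positive $1$-homogeneous $k$. The paper's route instead reuses the stated covariance \eqref{aph} and the variational formula of Theorem~\ref{main} verbatim, following the template of Lutwak, Yang, and Zhang.
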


\begin{proof}
We first verify that $K$ is trace-regular for $g\circ\phi$.
Since $\phi\in\mathrm{SL}(n)$, there exist constants
$0<c_\phi\le C_\phi<\infty$ such that
\[
c_\phi\le|\phi u|\le C_\phi,
\qquad u\in S^{n-1}.
\]
For $v=\phi u/|\phi u|$, equations \eqref{hom} and \eqref{phin}
give
\[
\rho_{\phi K}(v)=|\phi u|\rho_K(u),
\qquad
dv=|\phi u|^{-n}\,du.
\]
Define
\[
E_\varepsilon(g,K)
:=
\int_{S^{n-1}}
\int_{\rho_K(u)-\varepsilon}^{\rho_K(u)+\varepsilon}
|g(ru)-g(\rho_K(u)u)|\,dr\,du.
\]
Using the changes of variables
$v=\phi u/|\phi u|$ and $s=|\phi u|r$, we obtain
\begin{align*}
E_\varepsilon(g\circ\phi,K)
&=
\int_{S^{n-1}}
\int_{\rho_K(u)-\varepsilon}^{\rho_K(u)+\varepsilon}
|g(r\phi u)-g(\rho_K(u)\phi u)|\,dr\,du\\
&=
\int_{S^{n-1}}|\phi u|^{n-1}
\int_{|\phi u|(\rho_K(u)-\varepsilon)}
^{|\phi u|(\rho_K(u)+\varepsilon)}
|g(sv)-g(\rho_{\phi K}(v)v)|\,ds\,dv\\
&\le
C_\phi^{n-1}
\int_{S^{n-1}}
\int_{\rho_{\phi K}(v)-C_\phi\varepsilon}
^{\rho_{\phi K}(v)+C_\phi\varepsilon}
|g(sv)-g(\rho_{\phi K}(v)v)|\,ds\,dv\\
&=
C_\phi^{n-1}E_{C_\phi\varepsilon}(g,\phi K).
\end{align*}
Consequently,
\[
\frac{E_\varepsilon(g\circ\phi,K)}{\varepsilon}
\le
C_\phi^n
\frac{E_{C_\phi\varepsilon}(g,\phi K)}
{C_\phi\varepsilon}
\longrightarrow0
\qquad\text{as }\varepsilon\to0^+,
\]
because $\phi K$ is trace-regular for $g$. Thus $K$ is
trace-regular for $g\circ\phi$. Lemma~\ref{trace-implies-integrability}
then yields
\[
g\circ\phi\in L^1(\partial K,\mathcal H^{n-1}).
\]
Moreover, Lemmas~\ref{lem:gK-integrability} and
\ref{trace-implies-integrability}, together with the trace-regularity
assumption, imply that
\[
g\in L^1(\partial(\phi K),\mathcal H^{n-1})
\qquad\text{for every }\phi\in\mathrm{SL}(n).
\]
Hence the boundary integrability required for the variational
formulas holds for both $g$ and $g\circ\phi$.

By \eqref{Vqmu}, the change of variables $x=\phi y$, and
\eqref{phin}, we have
\begin{align}\label{aff}
\widetilde{D}^{\mu}_{q}(\phi K,\phi Q,\phi C)
&=
\frac1n\int_{\phi K\setminus\phi C}
\rho_{\phi Q}(x)^{n-q}g(x)\,dx
-
\frac1n\int_{\phi C\setminus\phi K}
\rho_{\phi Q}(x)^{n-q}g(x)\,dx
\notag\\
&=
\frac1n\int_{\phi K\setminus\phi C}
\rho_Q(\phi^{-1}x)^{n-q}g(x)\,dx
-
\frac1n\int_{\phi C\setminus\phi K}
\rho_Q(\phi^{-1}x)^{n-q}g(x)\,dx
\notag\\
&=
\frac1n\int_{K\setminus C}
\rho_Q(y)^{n-q}g(\phi y)\,dy
-
\frac1n\int_{C\setminus K}
\rho_Q(y)^{n-q}g(\phi y)\,dy
\notag\\
&=
\widetilde{D}^{\mu^\phi}_{q}(K,Q,C).
\end{align}

For $p\ne0$, it follows from \eqref{aph} and \eqref{aff} that
\begin{align*}
&\lim_{t\to0}
\frac{
\widetilde{D}^{\mu}_{q}
(\phi K+_pt\cdot\phi L,\phi Q,\phi C)
-
\widetilde{D}^{\mu}_{q}(\phi K,\phi Q,\phi C)
}{t}\\
&\quad=
\lim_{t\to0}
\frac{
\widetilde{D}^{\mu}_{q}
\bigl(\phi(K+_pt\cdot L),\phi Q,\phi C\bigr)
-
\widetilde{D}^{\mu}_{q}(\phi K,\phi Q,\phi C)
}{t}\\
&\quad=
\lim_{t\to0}
\frac{
\widetilde{D}^{\mu^\phi}_{q}
(K+_pt\cdot L,Q,C)
-
\widetilde{D}^{\mu^\phi}_{q}(K,Q,C)
}{t}.
\end{align*}
By \eqref{Vv}, we obtain
\[
\int_{S^{n-1}}h_{\phi L}^p(v)\,
d\widetilde C^\mu_{p,q}(\phi K,\phi Q,v)
=
\int_{S^{n-1}}h_L^p(v)\,
d\widetilde C^{\mu^\phi}_{p,q}(K,Q,v).
\]
Equivalently,
\begin{equation}\label{ii}
\int_{S^{n-1}}h_L^p(v)\,
d\widetilde C^\mu_{p,q}(\phi K,\phi Q,v)
=
\int_{S^{n-1}}h_{\phi^{-1}L}^p(v)\,
d\widetilde C^{\mu^\phi}_{p,q}(K,Q,v).
\end{equation}
Together with \eqref{B}, \eqref{ii}, \eqref{hhom} and \eqref{hphi}, we get
\begin{align*}
&\int_{S^{n-1}}h_L^p(v)\,
d\left(
\phi_{p^\dashv}^t
\widetilde C^{\mu^\phi}_{p,q}(K,Q,\cdot)
\right)(v)\\
&\quad=
\int_{S^{n-1}}
|\phi^{-t}v|^p
h_L^p\bigl(\overline{\phi^{-t}v}\bigr)\,
d\widetilde C^{\mu^\phi}_{p,q}(K,Q,v)\\
&\quad=
\int_{S^{n-1}}
h_L^p(\phi^{-t}v)\,
d\widetilde C^{\mu^\phi}_{p,q}(K,Q,v)\\
&\quad=
\int_{S^{n-1}}
h_{\phi^{-1}L}^p(v)\,
d\widetilde C^{\mu^\phi}_{p,q}(K,Q,v)\\
&\quad=
\int_{S^{n-1}}h_L^p(v)\,
d\widetilde C^\mu_{p,q}(\phi K,\phi Q,v).
\end{align*}
Since
\[
\left\{
c h_L^p-c h_B^p:
L\in\mathcal K_o^n,\ c>0
\right\}
\]
is dense in $C(S^{n-1})$ (see, e.g.,
\cite[Lemma 2.1]{LYZ2018}), we obtain \eqref{SB} for $p\ne0$.

For $p=0$, \eqref{aph} and \eqref{aff} similarly give
\begin{align*}
&\lim_{t\to0}
\frac{
\widetilde D^\mu_q
((1-t)\cdot\phi K+_0t\cdot\phi L,\phi Q,\phi C)
-
\widetilde D^\mu_q(\phi K,\phi Q,\phi C)
}{t}\\
&\quad=
\lim_{t\to0}
\frac{
\widetilde D^\mu_q
\bigl(\phi((1-t)\cdot K+_0t\cdot L),\phi Q,\phi C\bigr)
-
\widetilde D^\mu_q(\phi K,\phi Q,\phi C)
}{t}\\
&\quad=
\lim_{t\to0}
\frac{
\widetilde D^{\mu^\phi}_q
((1-t)\cdot K+_0t\cdot L,Q,C)
-
\widetilde D^{\mu^\phi}_q(K,Q,C)
}{t}.
\end{align*}
By \eqref{vv}, we have
\[
\int_{S^{n-1}}
\log\frac{h_{\phi L}(v)}{h_{\phi K}(v)}
\,d\widetilde C^\mu_{0,q}(\phi K,\phi Q,v)
=
\int_{S^{n-1}}
\log\frac{h_L(v)}{h_K(v)}
\,d\widetilde C^{\mu^\phi}_{0,q}(K,Q,v).
\]
Equivalently,
\begin{align}
&\int_{S^{n-1}}\log h_L(v)\,
d\widetilde C^\mu_{0,q}(\phi K,\phi Q,v)
-
\int_{S^{n-1}}\log h_{\phi K}(v)\,
d\widetilde C^\mu_{0,q}(\phi K,\phi Q,v)
\notag\\
&\qquad=
\int_{S^{n-1}}\log h_{\phi^{-1}L}(v)\,
d\widetilde C^{\mu^\phi}_{0,q}(K,Q,v)
-
\int_{S^{n-1}}\log h_K(v)\,
d\widetilde C^{\mu^\phi}_{0,q}(K,Q,v).
\label{ai}
\end{align}
Taking $L=B$ in \eqref{ai} and using \eqref{hphi}, we have
\[
h_{\phi^{-1}B}(v)
=
h_B(\phi^{-t}v)
=
|\phi^{-t}v|.
\]
It follows that
\begin{align}
&-\int_{S^{n-1}}\log h_{\phi K}(v)\,
d\widetilde C^\mu_{0,q}(\phi K,\phi Q,v)
\notag\\
&\qquad=
\int_{S^{n-1}}\log|\phi^{-t}v|\,
d\widetilde C^{\mu^\phi}_{0,q}(K,Q,v)
-
\int_{S^{n-1}}\log h_K(v)\,
d\widetilde C^{\mu^\phi}_{0,q}(K,Q,v).
\label{BB}
\end{align}
Combining \eqref{ai} and \eqref{BB}, we obtain
\begin{align*}
\int_{S^{n-1}}\log h_L(v)\,
d\widetilde C^\mu_{0,q}(\phi K,\phi Q,v)
&=
\int_{S^{n-1}}
\left(
\log h_{\phi^{-1}L}(v)
-
\log|\phi^{-t}v|
\right)
d\widetilde C^{\mu^\phi}_{0,q}(K,Q,v)\\
&=
\int_{S^{n-1}}
\log h_L\bigl(\overline{\phi^{-t}v}\bigr)\,
d\widetilde C^{\mu^\phi}_{0,q}(K,Q,v),
\end{align*}
where the last equality follows from \eqref{hhom} and \eqref{hphi}, which give
\[
h_{\phi^{-1}L}(v)
=
h_L(\phi^{-t}v)
=
|\phi^{-t}v|\,
h_L\bigl(\overline{\phi^{-t}v}\bigr).
\]
Together with \eqref{B}, this gives
\begin{align*}
&\int_{S^{n-1}}\log h_L(v)\,
d\left(
\phi_{0^\dashv}^t
\widetilde C^{\mu^\phi}_{0,q}(K,Q,\cdot)
\right)(v)\\
&\quad=
\int_{S^{n-1}}
\log h_L\bigl(\overline{\phi^{-t}v}\bigr)\,
d\widetilde C^{\mu^\phi}_{0,q}(K,Q,v)\\
&\quad=
\int_{S^{n-1}}\log h_L(v)\,
d\widetilde C^\mu_{0,q}(\phi K,\phi Q,v).
\end{align*}
Since
\[
\left\{
c\log h_L-c\log h_B:
L\in\mathcal K_o^n,\ c>0
\right\}
\]
is dense in $C(S^{n-1})$ (see, e.g.,
\cite[Lemma 2.1]{LYZ2018}), we obtain \eqref{SB} for $p=0$.
\end{proof}

Combining \eqref{12} and \eqref{23} with $f=h_L^p$, and recalling \eqref{Vpq}, we obtain
\begin{align}\label{dm}
\widetilde{V}_{p,q}^{\mu}(K, L, Q)
&=
\frac{1}{n}\int_{\partial^{\prime}K}
h_L^p(n_K(x))
(x \cdot n_K(x))^{1-p}
\rho_Q^{n-q}(x)g(x)\,
d\mathcal{H}^{n-1}(x)
\notag \\
&=
\frac{1}{n}\int_{S^{n-1}}
\left(
\frac{h_L(\alpha_K(u))}
{h_K(\alpha_K(u))}
\right)^p
\left(
\frac{\rho_K(u)}
{\rho_Q(u)}
\right)^q
\rho_Q^n(u)g(r_K(u))\,du.
\end{align}

In view of \eqref{cp}, for $p\in\mathbb R$, the $L_p$ mixed $\mu$-measure of $K\in\mathcal K_g^n$ and $L\in\mathcal K_o^n$ is given by
\begin{equation*}
V_p^{\mu}(K, L)
:=
\frac{1}{n}\int_{S^{n-1}}
h_L^p(v)\,\mathrm{d}S_p^{\mu}(K, v)
=
\frac{1}{n}\int_{\partial^{\prime}K}
h_L^p(n_K(x))
(x \cdot n_K(x))^{1-p}
g(x)\,d\mathcal{H}^{n-1}(x).
\end{equation*}
For $q<0$, the dual $L_q$ mixed $\mu$-measure was introduced in
\cite{LLH2024}. For sufficiently regular densities, it admits
the following variational representation:
\begin{align*}
\widehat{V}_{q}^{\mu}(K,L)
&:=
\frac{q}{n}
\lim_{t\to 0}
\frac{\mu(L\widetilde{+}_{q}t\cdot K)-\mu(K)}{t}
\\
&=
\frac{1}{n}\int_{S^{n-1}}
\rho_K^q(u)\rho_L^{n-q}(u)
g(r_K(u))\,du.
\end{align*}
This representation generalizes \eqref{wV} to arbitrary measures. Motivated by this formula, we
extend $\widehat{V}_{q}^{\mu}(K,L)$ to all $q\in\mathbb R$ by adopting
the same integral representation. Specifically, for
$K\in\mathcal K_g^n$ and $L\in\mathcal S_o^n$, we define
\begin{equation}\label{08}
\widehat{V}_{q}^{\mu}(K,L)
=
\frac{1}{n}\int_{S^{n-1}}
\rho_K^q(u)\rho_L^{n-q}(u)
g(r_K(u))\,du.
\end{equation}

\begin{thm}
\label{thm:redu}
Let $p,q\in\mathbb R$, $K,L\in\mathcal K_o^n$, and $Q\in\mathcal S_o^n$.
Assume $g\in L^1_{\mathrm{loc}}(\mathbb R^n)$ and that $K$ satisfies
the trace-regularity condition \eqref{radial-trace}. Then,
\begin{align}
\widetilde{V}^{\mu}_{p,q}(K, L, K)
&= \widetilde{V}_{p,n}^{\mu}(K, L, B)
= V^{\mu}_{p}(K, L), \notag \\
\widetilde{V}^{\mu}_{p,q}(K, K, Q)
&= \widetilde{V}_{0,q}^{\mu}(K, L, Q)
= \widehat{V}_{q}^{\mu}(K,Q). \label{K0}
\end{align}
\end{thm}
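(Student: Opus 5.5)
The plan is to read all four identities off the two integral representations in \eqref{dm}, after checking that every quantity involved is finite. First, since $K$ satisfies \eqref{radial-trace}, Lemma~\ref{trace-implies-integrability} gives $K\in\mathcal K_g^n$. Hence $\widetilde C^\mu_{p,q}(K,\cdot,\cdot)$, $V_p^\mu(K,L)$ and $\widehat V_q^\mu(K,Q)$ are all finite and well defined. Moreover, Proposition~\ref{representation} applies with the bounded continuous functions $f=h_L^p$ and $f=h_K^p$; since $K,L\in\mathcal K_o^n$, both support functions are bounded above and away from zero on $S^{n-1}$. So both lines of \eqref{dm} are available.

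For the first line of \eqref{K0}, I will use the boundary representation in \eqref{dm}. Taking $Q=K$, the factor $\rho_K^{n-q}(x)$ equals $1$ for every $x\in\partial K$, by the $(-1)$-homogeneity \eqref{hom} and $\rho_K(\overline x)=|x|$. Likewise, taking $q=n$ and $Q=B$ gives $\rho_B^{0}\equiv1$. In both cases the integrand reduces to
\[
h_L^p(n_K(x))(x\cdot n_K(x))^{1-p}g(x),
\]
which is exactly the integrand defining $V_p^\mu(K,L)$. Equivalently, this is \eqref{s1} integrated against $h_L^p$.

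For the second line, I will use the radial representation in \eqref{dm}. If $L=K$, the ratio $h_L(\alpha_K(u))/h_K(\alpha_K(u))$ equals $1$ for almost every $u$, because $\alpha_K$ is defined off the null set $\omega_K$. If instead $p=0$, the factor $(h_L/h_K)^0$ equals $1$ identically. Either way the integrand becomes
\[
\rho_K^q(u)\rho_Q^{n-q}(u)g(r_K(u)),
\]
and comparing with \eqref{08} gives $\widehat V_q^\mu(K,Q)$.

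The only delicate point is the convention at $p=0$. The definition \eqref{Vpq} uses $h_L^p$, not $h_L^{\bar p}$, so at $p=0$ it is $\int 1\,d\widetilde C^\mu_{0,q}$ and is independent of $L$. I will note this explicitly, since it is what makes the middle term $\widetilde V^\mu_{0,q}(K,L,Q)$ in \eqref{K0} meaningful for arbitrary $L$. Beyond that, the argument is bookkeeping with homogeneity and requires no limiting procedure.
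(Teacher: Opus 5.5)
Your proposal is correct and follows essentially the paper's route. The first line comes from \eqref{s1}, which is your observation that $\rho_K^{n-q}\equiv1$ on $\partial K$ and $\rho_B^{0}\equiv1$ in the boundary form of \eqref{dm}. The second line comes from the radial form of \eqref{dm}: there the factor $(h_L/h_K)^p(\alpha_K(u))$ becomes $1$ when $L=K$ or $p=0$, which leaves exactly the integrand of \eqref{08}. Your explicit check that $K\in\mathcal K_g^n$ via Lemma~\ref{trace-implies-integrability}, so that Proposition~\ref{representation} applies, is a detail the paper leaves implicit.
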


\begin{proof}
By \eqref{Vpq} and \eqref{s1}, we have
\begin{equation*}
\begin{aligned}
\widetilde{V}_{p,q}^{\mu}(K, L, K)
&= \int_{S^{n-1}} h_L^p(v) \,d\widetilde{C}_{p,q}^{\mu}(K, K, v) \\
&= \frac{1}{n} \int_{S^{n-1}} h_L^p(v)\,dS^{\mu}_p(K, v) \\
&= V_p^{\mu}(K, L),
\end{aligned}
\end{equation*}
and
\begin{equation*}
\begin{aligned}
\widetilde{V}_{p,n}^{\mu}(K,L,B)
&= \int_{S^{n-1}} h_L^p(v) \,d\widetilde{C}_{p,n}^{\mu}(K, B, v) \\
&= \frac{1}{n} \int_{S^{n-1}} h_L^p(v) \,dS^{\mu}_p(K, v) \\
&= V_p^{\mu}(K, L).
\end{aligned}
\end{equation*}

Furthermore, by \eqref{dm} and by applying \eqref{08} with $n-q$ in place of $q$, we obtain
\begin{equation*}
\begin{aligned}
\widetilde{V}^{\mu}_{p,q}(K, K, Q)
&= \widetilde{V}_{0,q}^{\mu}(K, L, Q) \\
&= \frac{1}{n} \int_{S^{n - 1}}
\left( \frac{\rho_K(u)}{\rho_Q(u)} \right)^{q}
\rho_Q(u)^n g(r_K(u)) \,du \\
&= \widehat{V}_{q}^{\mu}(K,Q).
\end{aligned}
\end{equation*}
\end{proof}

\begin{thm}\label{slin}
Let $p,q\in\mathbb R$, $K,L\in\mathcal K_o^n$, and $Q\in\mathcal S_o^n$.
Assume $g\in L^1_{\mathrm{loc}}(\mathbb R^n)$ and that $\phi K$ satisfies
the trace-regularity condition \eqref{radial-trace} for every
$\phi\in\mathrm{SL}(n)$. Then, for every $\phi\in\mathrm{SL}(n)$,
\begin{equation}\label{c}
\widetilde{V}^{\mu}_{p,q}(\phi K, \phi L, \phi Q)
=
\widetilde{V}^{\mu^{\phi}}_{p,q}(K, L, Q).
\end{equation}
\end{thm}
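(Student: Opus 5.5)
The plan is to reduce \eqref{c} directly to the affine identity \eqref{SB} of Theorem~\ref{pp1}, combined with the definition \eqref{Vpq} of the $(p,q)$-weighted dual mixed volume and the transformation rule \eqref{B} for $L_p$ images of measures. The hypotheses of Theorem~\ref{slin} coincide with those of Theorem~\ref{pp1}. In particular, trace-regularity of $\phi K$ for every $\phi\in\mathrm{SL}(n)$ is assumed. Consequently, \eqref{SB} is available, and both $\phi K\in\mathcal K_g^n$ and $K\in\mathcal K_{g\circ\phi}^n$ hold (the latter was verified in the first step of the proof of Theorem~\ref{pp1}). So both sides of \eqref{c} are well defined and finite.

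The computation is as follows. By \eqref{Vpq} and \eqref{SB},
\[
\widetilde V^\mu_{p,q}(\phi K,\phi L,\phi Q)
=\int_{S^{n-1}}h_{\phi L}^p(v)\,d\widetilde C^\mu_{p,q}(\phi K,\phi Q,v)
=\int_{S^{n-1}}h_{\phi L}^p(v)\,d\bigl(\phi^t_{p^\dashv}\widetilde C^{\mu^\phi}_{p,q}(K,Q,\cdot)\bigr)(v).
\]
I then apply \eqref{B} with $\phi$ replaced by $\phi^t$ and $f=h_{\phi L}^p$. This is a bounded continuous function on $S^{n-1}$, so \eqref{B} applies. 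The resulting integrand is
\[
|\phi^{-t}v|^p\,h_{\phi L}^p\bigl(\overline{\phi^{-t}v}\bigr).
\]
By the $1$-homogeneity \eqref{hhom}, this equals $h_{\phi L}^p(\phi^{-t}v)$. By \eqref{hphi}, that is $h_L^p(\phi^t\phi^{-t}v)=h_L^p(v)$. The right-hand side therefore becomes $\int_{S^{n-1}}h_L^p\,d\widetilde C^{\mu^\phi}_{p,q}(K,Q,\cdot)=\widetilde V^{\mu^\phi}_{p,q}(K,L,Q)$. This is exactly the manipulation already carried out in the proof of Theorem~\ref{pp1}, read in the reverse direction. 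Alternatively, one could read it off from \eqref{ii} with $L$ replaced by $\phi L$.

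Since \eqref{SB} is already established for all $p\in\mathbb R$, including $p=0$, no separate case analysis should be needed. For $p=0$ both sides of \eqref{c} are just total masses, and \eqref{B} with $p=0$ preserves total mass. There is no genuine obstacle. The only point requiring care is bookkeeping of transposes and inverses: one must check that the measure in \eqref{SB} is $\phi^t_{p^\dashv}$, so that \eqref{B} produces $\phi^{-t}$, which cancels against the $\phi^t$ coming from \eqref{hphi}. One must also note that the identity between measures in \eqref{SB} legitimately extends to the integrand $h_{\phi L}^p$ as a bounded Borel function.
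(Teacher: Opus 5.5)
Your proof is correct, and for $p\neq0$ it is exactly the paper's computation: \eqref{Vpq}, then \eqref{SB}, then \eqref{B} with $\phi^t$, then \eqref{hhom} and \eqref{hphi} to cancel $\phi^{-t}$ against $\phi^t$. The only difference is at $p=0$. There the paper does not invoke \eqref{SB}; it verifies \eqref{c} directly by the change of variables $u=\phi v/|\phi v|$ in the radial formula \eqref{08}, which is more elementary and avoids the variational machinery behind the $p=0$ case of Theorem~\ref{pp1}. Your observation that both sides are total masses and that $\phi^t_{0^\dashv}$ preserves total mass is an equally valid way to cover that case, and it lets you handle all $p$ uniformly.
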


\begin{proof}
First, consider the case $p\neq0$. It follows from \eqref{Vpq}, \eqref{B}, \eqref{SB}, \eqref{hhom}, and \eqref{hphi} that
\begin{align*}
\widetilde{V}^{\mu}_{p,q}(\phi K, \phi L, \phi Q)
&=
\int_{S^{n-1}} h_{\phi L}^{p}(v)
\,d\widetilde{C}^{\mu}_{p,q}(\phi K, \phi Q, v) \\
&=
\int_{S^{n-1}} h_{\phi L}^{p}(v)
\,d\left(
\phi_{p^\dashv}^{t}
\widetilde{C}^{\mu^{\phi}}_{p,q}(K,Q,\cdot)
\right)(v) \\
&=
\int_{S^{n-1}}
\left|\phi^{-t}v\right|^p
h_{\phi L}^{p}\bigl(\overline{\phi^{-t}v}\bigr)
\,d\widetilde{C}^{\mu^{\phi}}_{p,q}(K,Q,v) \\
&=
\int_{S^{n-1}} h_L^p(v)
\,d\widetilde{C}^{\mu^{\phi}}_{p,q}(K,Q,v) \\
&=
\widetilde{V}^{\mu^{\phi}}_{p,q}(K,L,Q).
\end{align*}

Now consider the case $p=0$. By \eqref{K0} and \eqref{08}, we have
\[
\widetilde{V}^{\mu}_{0,q}(K,L,Q)
=
\frac{1}{n}\int_{S^{n-1}}
\left(\frac{\rho_K(u)}{\rho_Q(u)}\right)^q
\rho_Q^n(u)\,g(r_K(u))\,du.
\]
For $u=\phi v/|\phi v|$, equations \eqref{hom} and \eqref{phin} give
\[
\rho_{\phi K}(u)=|\phi v|\rho_K(v),\qquad
\rho_{\phi Q}(u)=|\phi v|\rho_Q(v),\qquad
\mathrm{d}v=|\phi u|^{-n}\,du.
\]
Consequently,
\[
\frac{\rho_{\phi K}(u)}{\rho_{\phi Q}(u)}
=
\frac{\rho_K(v)}{\rho_Q(v)}.
\]
Furthermore,
\[
r_{\phi K}(u)
=
\rho_{\phi K}(u)u
=
|\phi v|\,\rho_K(v)\,\frac{\phi v}{|\phi v|}
=
\phi\bigl(\rho_K(v)v\bigr)
=
\phi\bigl(r_K(v)\bigr).
\]
Therefore,
\begin{align*}
\widetilde{V}^{\mu}_{0,q}(\phi K,\phi L,\phi Q)
&=
\frac{1}{n}\int_{S^{n-1}}
\left(
\frac{\rho_{\phi K}(u)}
{\rho_{\phi Q}(u)}
\right)^q
\rho_{\phi Q}^n(u)
g\bigl(r_{\phi K}(u)\bigr)\,du\\
&=
\frac{1}{n}\int_{S^{n-1}}
\left(
\frac{\rho_K(v)}
{\rho_Q(v)}
\right)^q
\rho_Q^n(v)\,|\phi v|^n
g\bigl(\phi(r_K(v))\bigr)
|\phi v|^{-n}\,dv\\
&=
\frac{1}{n}\int_{S^{n-1}}
\left(
\frac{\rho_K(v)}
{\rho_Q(v)}
\right)^q
\rho_Q^n(v)
g\bigl(\phi(r_K(v))\bigr)\,dv\\
&=
\widetilde{V}^{\mu^\phi}_{0,q}(K,L,Q).
\end{align*}
This completes the proof of \eqref{c}.
\end{proof}

\end{document}